\documentclass[12pt]{amsart}
\usepackage{latexsym,amsthm,amsmath,amssymb,amsfonts}
\usepackage{tikz}\usetikzlibrary{calc,matrix,arrows}

\theoremstyle{plain}
\newtheorem{thm}{Theorem}[section]
\newtheorem{main}{Theorem}\renewcommand{\themain}{\Alph{main}}
\newtheorem{lem}[thm]{Lemma}
\newtheorem{prop}[thm]{Proposition}
\newtheorem{cor}[thm]{Corollary}
\theoremstyle{definition}
\newtheorem{defn}[thm]{Definition}
\newtheorem{rem}[thm]{Remark}
\newtheorem{exmp}[thm]{Example}

\newcommand{\R}{\mathbb{R}}
\newcommand{\F}{\mathbb{F}}
\newcommand{\sph}{\mathbb{S}}

\newcommand{\trees}{\mathsf{Trees}}\newcommand{\htrees}{\mathsf{HT}}
\newcommand{\nctrees}{\mathsf{NCTrees}}\newcommand{\nchtrees}{\mathsf{NCHT}}
\newcommand{\forests}{\mathsf{Forests}}\newcommand{\hforests}{\mathsf{HF}}

\newcommand{\parts}{\mathsf{Part}}\newcommand{\ncparts}{\mathsf{NCPart}}

\newcommand{\apartment}{\mathsf{Apart}}\newcommand{\sphere}{\mathsf{Sphere}}
\newcommand{\chamber}{\mathsf{Chamber}}\newcommand{\simplex}{\mathsf{Simplex}}
\newcommand{\poset}{\mathsf{Poset}}
\newcommand{\bool}{\mathsf{Bool}}
\newcommand{\dih}{\mathsf{Dih}}
\newcommand{\complex}{\mathsf{Complex}}

\newcommand{\link}{\mathsf{Link}}
\newcommand{\refl}{\mathsf{Refl}}
\newcommand{\rev}{\mathsf{rev}}
\newcommand{\linear}{\mathsf{Linear}}

\newcommand{\sym}{\mathsf{Sym}}

\newcommand{\catalan}{\mathsf{Cat}}

\newcommand{\size}{\mathrm{size}} 

\newcommand{\reuler}{\widetilde \chi}

\newcommand{\card}[1]{|#1|}

\newcommand{\poly}[2]{(0:#2) \foreach \x in {1,2,...,#1} {--({\x*360/#1}:#2)}--cycle;}

\tikzstyle{YellowPoly}=[thin,color=black,fill=yellow!20,join=bevel]
\tikzstyle{GreenPoly}=[thick,color=green!50!black,fill=green!30,join=bevel]
\tikzstyle{BlueLine}=[very thick,color=blue,join=bevel]
\tikzstyle{RedLine}=[very thick,color=red,join=bevel]
\tikzstyle{GreenLine}=[very thick,color=green!50!black,join=bevel]
\tikzstyle{BluePoly}=[BlueLine,fill=blue!20]
\tikzstyle{RedPoly}=[RedLine,fill=red!20]
\tikzstyle{heposet}=[shape=circle,draw,color=blue,fill=yellow!20,join=bevel]
\tikzstyle{smallDot}=[draw,shape=circle,color=black,fill=black,inner sep=.75pt]
\tikzstyle{Rect}=[fill=yellow!20,rounded corners,minimum width=.9cm,minimum height=.9cm,draw]
\newcommand{\drawAngle}[3]{\path #1 -- ++#2 coordinate(temp0) ++#3
  coordinate(temp1) #1 -- ++#3 coordinate(temp2); \draw (temp0)--(temp1)--(temp2);}
\newcommand{\drawAngleD}[3]{\path #1 -- ++#2 coordinate(temp0) ++#3
  coordinate(temp1) #1 -- ++#3 coordinate(temp2); \draw[dashed] (temp0)--(temp1)--(temp2);}

\begin{document}

\title[Noncrossing hypertrees]{Noncrossing hypertrees}

\author{Jon McCammond}

\date{\today}

\begin{abstract}
  Hypertrees and noncrossing trees are well-established objects in the
  combinatorics literature, but the hybrid notion of a noncrossing
  hypertree has received less attention.  In this article I
  investigate the poset of noncrossing hypertrees as an induced
  subposet of the hypertree poset.  Its dual is the face poset of a
  simplicial complex, one that can be identified with a generalized
  cluster complex of type $A$.  The first main result is that this
  \emph{noncrossing hypertree complex} is homeomorphic to a piecewise
  spherical complex associated with the noncrossing partition lattice
  and thus it has a natural metric.  The fact that the order complex
  of the noncrossing partition lattice with its bounding elements
  removed is homeomorphic to a generalized cluster complex was not
  previously known or conjectured.

  The metric noncrossing hypertree complex is a union of unit spheres
  with a number of remarkable properties: 1) the metric subspheres and
  simplices in each dimension are both bijectively labeled by the set
  of noncrossing hypertrees with a fixed number of hyperedges, 2) the
  number of spheres containing the simplex labeled by the noncrossing
  tree $\tau$ is the same as the number simplices in the sphere
  labeled by the noncrossing tree $\tau$, and 3) among the maximal
  spherical subcomplexes one finds every normal fan of a metric
  realization of the simple associahedron associated to the cluster
  algebra of type $A$.  In particular, the poset of noncrossing
  hypertrees and its metric simplicial complex provide a new
  perspective on familiar combinatorial objects and a common context
  in which to view the known bijections between noncrossing partitions
  and the vertices/facets of simple/simplicial associahedra.
\end{abstract}

\keywords{Noncrossing trees, hypertrees, noncrossing partitions,
  associahedra, cluster algebras}

\maketitle

\section*{Introduction}

The properties of hypertrees are well-documented \cite{MM96, Wa98,
  Ka99, BMMM01, MM04, JMM06, JMM07, Ch07, Og13} as are the properties
of noncrossing trees \cite{No98, FlNo99, DeNo02, PaPr02, Pa03, Ho03,
  ChYa06, SuWa09, LvSa14}, but the hybrid concept of a noncrossing
hypertree has received much less attention.  The poset of noncrossing
hypertrees, viewed as an induced subposet of the better-known
hypertree poset has upper intervals that are Boolean lattices and thus
is (the dual of) the face lattice of a simplicial complex that I call
the \emph{noncrossing hypertree complex}.\footnote{There is an
  alternative encoding of noncrossing hypertrees as decompositions of
  even-sided polygons into even-sided subpolygons and as such the
  noncrossing hypertree complex can be identified with the generalized
  cluster complex of type $A$ with $m = 2$.  See
  \S\ref{sec:ncht-complex} for details. In this guise it has been
  studied before but different aspects of its structure are visible
  when its simplex labels are viewed as noncrossing hypertrees. In
  particular the homeomorphism established in
  Theorem~\ref{main:geometry} is new and its proof relies heavily on
  the structure of the noncrossing hypertrees.}

The first main result is to identify the topology of the noncrossing
hypertree complex as that of the piecewise spherical metric simplicial
complex that is the link of the long diagonal edge in the orthoscheme
complex of the noncrossing partition lattice, which, for brevity, I
refer to as the \emph{noncrossing partition link}.  Through this
connection the noncrossing hypertree complex can be turned into a
geometric object.

\begin{main}[Topology and Geometry]\label{main:geometry}
  The noncrossing hypertree complex is naturally homeomorphic to the
  noncrossing partition link. As a consequence, the piecewise
  spherical metric on the latter induces a piecewise spherical metric
  on the former.
\end{main}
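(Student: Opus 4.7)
The plan is to construct an explicit simplicial isomorphism between the noncrossing hypertree complex and the noncrossing partition link. Since both objects are simplicial complexes, a bijection between their face posets that preserves containment order automatically lifts to a homeomorphism of their geometric realizations, and the piecewise spherical metric on the link can then be transported simplex-by-simplex to the hypertree complex.

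I would first unpack the two simplicial structures in parallel. A simplex of the noncrossing partition link on $[n]$ is encoded by a chain $\pi_0 < \pi_1 < \cdots < \pi_k$ of proper elements of the noncrossing partition lattice (the bounding elements $\zero$ and $\one$ sit at the endpoints of the long diagonal of the orthoscheme and are suppressed on passing to the link); its faces are obtained by deleting interior entries. A simplex of the noncrossing hypertree complex is, under the Boolean-interval duality applied to the noncrossing hypertree poset, encoded by a noncrossing hypertree; its faces are obtained by merging two hyperedges that share a common vertex into a single larger hyperedge.

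The heart of the proof is an explicit bijection $T \mapsto c(T)$ sending each noncrossing hypertree $T$ on $[n]$ to a chain of proper noncrossing partitions. The key numerological identity is that a hypertree on $[n]$ with hyperedges of sizes $s_1, \ldots, s_k$ satisfies $\sum (s_i - 1) = n - 1$, matching the length of a maximal chain in $\ncparts([n])$. The bijection reads off the hyperedges of $T$ in a canonical outer-to-inner order determined by the noncrossing structure on the bounding disk, and translates each hyperedge of size $s$ into $s - 1$ successive cover relations in the chain of partitions; the noncrossing property of $T$ is exactly what forces every intermediate partition to be noncrossing.

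The main obstacle will be verifying that this bijection intertwines the two notions of face inclusion, that is, merging two adjacent hyperedges of $T$ on the hypertree side corresponds to deleting a single intermediate partition from $c(T)$ on the partition side, and conversely. Once this compatibility is established in both directions, the bijection is an isomorphism of face posets, hence of simplicial complexes, which yields the claimed homeomorphism. The metric claim is then automatic: each simplex of the link carries its piecewise spherical metric, and this metric transports isometrically along the simplicial isomorphism to equip the noncrossing hypertree complex with a piecewise spherical metric in which each simplex is isometric to its counterpart in the link.
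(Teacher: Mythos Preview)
Your proposal has a genuine gap: the two complexes are \emph{not} simplicially isomorphic, so no bijection of face posets can exist. The paper records this explicitly (see Table~\ref{tbl:ncht-ncpl} and the surrounding remarks): for $n=3$ the noncrossing partition link has $12$ vertices and $16$ top simplices, while the noncrossing hypertree complex has only $8$ vertices and $12$ top simplices. More generally, simplices in $L$ are labeled by \emph{properly ordered} noncrossing hypertrees (Corollary~\ref{cor:simplices}), whereas simplices in $C$ are labeled by noncrossing hypertrees without an ordering; a hypertree $\tau$ whose hyperedge poset $\poset(\tau)$ has several linear extensions therefore corresponds to several partition simplices but only one tree simplex. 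Your map $T\mapsto c(T)$, which picks one canonical ordering, cannot be surjective onto chains, and the face-compatibility check fails too: if $e$ and $e'$ form a covering relation in $\poset(\tau)$ but are not adjacent in your canonical linear extension, then merging them does not correspond to deleting a single element of $c(\tau)$.

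What the paper actually does is show that $C$ is a \emph{coarsening} of $L$: for each noncrossing hypertree $\tau$, the union $\simplex(\tau)$ of all partition simplices labeled by proper orderings of $\tau$ is itself a spherical simplex (Lemma~\ref{lem:po-simplices}), obtained as an intersection of hemispheres inside the sphere $\sphere(\tau)$ determined by the covering relations of $\poset(\tau)$. The homeomorphism is then built skeleton-by-skeleton, sending the tree simplex labeled $\tau$ onto this amalgamated shape $\simplex(\tau)$ in $L$. So the metric is not transported along a simplicial isomorphism; rather, each tree simplex inherits the metric of a union of Coxeter-shape spherical simplices glued along shared facets.
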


One way to view the noncrossing hypertree complex is as a simpler
simplicial structure on the noncrossing partition link.  This
simplified metric structure has a number of remarkable properties.
For example, there is a duality between its simplices and its metric
subspheres.

\begin{main}[Simplices and Spheres]\label{main:duality}
  Every spherical simplex in the metric noncrossing hypertree complex
  of any dimension is contained in a subcomplex isometric to a unit
  sphere of the same dimension.  In fact, in the top dimension (and
  conjecturally in all dimensions) there is a natural map from
  simplices to spherical subcomplexes that establishes a bijection
  between these two sets.
\end{main}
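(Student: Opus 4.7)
The plan is to work in the noncrossing partition link, using the homeomorphism of Theorem~\ref{main:geometry}, since the piecewise spherical metric there is inherited from the link of a long diagonal in the orthoscheme complex of the noncrossing partition lattice, and round subspheres arise naturally from apartment-like structures in that orthoscheme complex. The first step is to identify, for each noncrossing hypertree $\eta$, both a simplex $\sigma_\eta$ and a candidate spherical subcomplex $S_\eta$. The subcomplex $S_\eta$ should be built as a spherical join: each hyperedge of $\eta$ determines a smaller marked polygon with its own noncrossing hypertree complex, and $S_\eta$ is the spherical join of these pieces. A spherical join of spheres is itself a sphere, of the correct total dimension, which provides the shape that the existence statement asks for.

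To prove that every simplex $\sigma_\eta$ is contained in a sphere of the same dimension, I would verify directly that $\sigma_\eta \subseteq S_\eta$. Under the homeomorphism of Theorem~\ref{main:geometry}, a face of $\sigma_\eta$ corresponds to a chain in the noncrossing partition lattice whose blocks refine the block structure cut out by the hyperedges of $\eta$; such a chain splits as a product of chains inside the individual hyperedge sub-lattices, which is exactly a simplex of the spherical join $S_\eta$. Repeating this face-by-face gives the containment.

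For the top-dimensional bijection I would restrict to the case when $\eta$ is a noncrossing tree $\tau$, so that all hyperedges have size two. Both the top-dimensional simplices and the top-dimensional spheres are naturally indexed by noncrossing trees, and the assignment $\tau \mapsto S_\tau$ coming from the join construction is the natural map to verify. Injectivity follows because $\tau$ can be recovered from $S_\tau$ as the unique noncrossing tree whose edges correspond to the irreducible spherical-join factors of $S_\tau$, and a Catalan-number count, matching the cardinality of each side with the number of noncrossing trees on the underlying vertex set, promotes injectivity to bijectivity.

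The main obstacle will be the metric verification: it is one thing to exhibit $S_\eta$ as a combinatorial join of smaller noncrossing hypertree complexes and another to establish that it is genuinely isometric to a round unit sphere of the correct dimension. This requires showing that the orthoscheme metric on the noncrossing partition link restricts to a product metric along the sub-interval of the noncrossing partition lattice determined by a hyperedge decomposition, so that the induced link decomposes isometrically as a spherical join of sub-links. Once this product-metric decomposition is in hand, the join of round spheres is automatically a round sphere and the rest of the argument is formal; establishing the metric splitting, however, is the technical heart of the proof and is where the detailed geometry of the orthoscheme complex of the noncrossing partition lattice has to be used in earnest.
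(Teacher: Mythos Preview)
Your construction of $S_\eta$ is not the right object, and this is a genuine gap rather than a stylistic difference.  You propose to build $S_\eta$ as a spherical join, one factor per hyperedge, each factor being the noncrossing hypertree complex on the sub-polygon spanned by that hyperedge.  But those factors are not spheres: for any hyperedge of size $s\ge 4$ the complex $\complex(\nchtrees_s)$ is the full noncrossing partition link on $s$ vertices, which is a union of many apartments, not a single sphere.  Worse, the dimensions do not match.  For a noncrossing \emph{tree} $\tau$ every hyperedge has two vertices, each factor is the empty complex, and your join collapses; yet the theorem demands an $(n-2)$--sphere containing the tree chamber.  So the join-over-hyperedges picture cannot be what produces the sphere.

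The paper's sphere $\sphere(\tau)$ is instead the link of the \emph{Boolean sublattice indexed by subsets of the hyperedges}: each subset of hyperedges is sent to the noncrossing partition given by its connected components (Definition~\ref{def:apartments-spheres}).  For $k$ hyperedges this is a $\bool_k$ sitting inside $\ncparts_{n+1}$, and its link is automatically an isometric $(k-2)$--sphere by Proposition~\ref{rem:subspheres}; no metric splitting needs to be verified.  The actual work, which your outline does not anticipate, is showing that this sphere---visibly a subcomplex in the fine partition-simplex structure---remains a subcomplex after coarsening to tree simplices.  That is the content of Lemmas~\ref{lem:standard-names}--\ref{lem:union-tree-simplices}: one must check that if a single proper ordering of some $\tau'$ labels a partition simplex in $\sphere(\tau)$, then \emph{every} proper ordering of $\tau'$ does, so that whole tree simplices lie in the sphere.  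Finally, your counting argument for the top-dimensional bijection is missing surjectivity: you need to know that \emph{every} top-dimensional metric sphere in the link arises from a noncrossing tree, which the paper gets from Proposition~\ref{prop:nc-apartments} via the embedding into a spherical building.
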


The spheres referred to in the theorem are called \emph{special
  spheres}.  Because the noncrossing partition link is a subcomplex of
a spherical building, its top-dimensional simplices and
top-dimensional spherical subcomplexes are sometimes called chambers
and apartments, respectively.  In the new simplicial structure of the
noncrossing hypertree complex the top-dimensional spheres are the same
as before so I still call them \emph{apartments} and I call the
top-dimensional simplices in the noncrossing hypertree complex
\emph{tree chambers}.  They are amalgamations of the original chambers
in the noncrossing partition link which, for the sake of clarity, I
call \emph{partition chambers}.  In this language
Theorem~\ref{main:duality} states that the set of tree chambers and
the set of apartments in the noncrossing hypertree complex are both
bijectively labeled by noncrossing trees.  The next result establishes
some additional aspects of this duality.

\begin{main}[Bijections]\label{main:bijections}
  For every noncrossing hypertree $\tau$ there is a bijection between
  the number of special spheres containing the tree simplex labeled
  $\tau$ as a top-dimensional simplex and the set of tree simplices
  contained in the special sphere labeled $\tau$.  When $\tau$ is a
  noncrossing tree, this means that there is a bijection between $\{
  \sigma \mid \chamber(\tau) \in \apartment(\sigma)\}$, the set of
  apartments containing the tree chamber labeled $\tau$ and $\{ \sigma
  \mid \chamber(\sigma) \in \apartment(\tau)\}$, the set of tree
  chambers in the apartment labeled~$\tau$.
\end{main}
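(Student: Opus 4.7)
By Theorem~\ref{main:duality}, fixing a noncrossing tree $\tau$ identifies both $\{\sigma:\chamber(\tau)\in\apartment(\sigma)\}$ and $\{\sigma:\chamber(\sigma)\in\apartment(\tau)\}$ with subsets of the set of noncrossing trees on the shared vertex set. My plan is to define an explicit duality $\iota$ on noncrossing trees and show that it bijects the first set to the second, by establishing that $\chamber(\tau)\in\apartment(\sigma)$ if and only if $\chamber(\iota(\sigma))\in\apartment(\iota(\tau))$. Since $\iota$ is an involution, this symmetry of the incidence relation in $\tau$ and $\sigma$ after relabeling is all that is needed.

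The first step is to translate the containment $\chamber(\tau)\in\apartment(\sigma)$ into purely combinatorial data. Via the homeomorphism of Theorem~\ref{main:geometry}, tree chambers correspond to specific unions of partition chambers in the noncrossing partition link, and apartments correspond to standard spherical apartments determined by a choice of coordinate frame. Chasing these identifications through the bijective labeling supplied by Theorem~\ref{main:duality}, the containment should reduce to a compatibility condition between the edges of $\tau$ and a distinguished factorization of the Coxeter element encoded by $\sigma$, or equivalently between the reflections labeling a maximal chain through $\chamber(\tau)$ and the coordinate frame determined by $\sigma$. Reading both inclusions in this form yields concrete combinatorial descriptions of the two sets in the theorem.

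The second step is to produce $\iota$. The natural candidate is a Kreweras-type rotation on noncrossing trees, descending from the Kreweras complement on the noncrossing partition lattice: a noncrossing tree determines a multichain of noncrossing partitions, and applying the complement and reassembling produces a new noncrossing tree. The main obstacle, and the bulk of the work, is verifying that this $\iota$ actually interchanges the two coordinates of the criterion isolated in the first step. I expect this verification to rest on two known ingredients: the Kreweras complement is an order-reversing involution whose action on maximal chains interchanges the roles of ``chamber data'' and ``apartment data'' in the ambient type~$A$ spherical building, and the bijection between noncrossing trees and minimal factorizations of the long cycle is equivariant under cyclic rotation. Assembling these with careful bookkeeping of which reflections appear in which position, the desired symmetry should follow. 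Once the chamber/apartment case is settled, the extension to lower-dimensional tree simplices and special spheres would reduce to applying the same argument in the link of the appropriate face.
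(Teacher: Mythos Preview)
Your high-level strategy is exactly the one the paper uses: produce an automorphism $\iota$ of the noncrossing hypertree complex satisfying the ``swap'' property that $\simplex(\tau')$ lies in $\sphere(\tau)$ if and only if $\simplex(\iota(\tau))$ lies in $\sphere(\iota(\tau'))$, and read off the bijection. The paper's version of this swap is Lemma~\ref{lem:reflect-include}. Where you diverge is in the choice of $\iota$, and that is where the proposal has a real gap.

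You nominate a Kreweras-type map and assert it is an involution. It is not: the Kreweras complement on noncrossing partitions squares to conjugation by the Coxeter element, a nontrivial rotation, and the same holds for its action on noncrossing hypertrees (the paper records this explicitly in its discussion of automorphisms). The paper instead takes $\iota=\refl$, the map induced by a \emph{reflection} of the underlying polygon. This genuinely is an involution; it visibly reverses all local linear orderings, so $\poset(\refl(\tau))$ is the dual of $\poset(\tau)$; and the swap property becomes a direct computation with reduced factorizations. Reversing a proper factorization $\sigma_1\cdots\sigma_k=c$ and inverting each factor gives a proper factorization of $c^{-1}$ for the reflected hypertree, and the conjugations performed when standardizing an improper ordering of $\tau$ (Lemma~\ref{lem:standard-names}) are exactly undone when standardizing the corresponding improper ordering of $\refl(\tau')$. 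No appeal to building-theoretic equivariance beyond this explicit bookkeeping is needed.

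There is also a structural step you skip. Even granting the swap, the map $\sigma\mapsto\iota(\sigma)$ only bijects the spheres containing $\simplex(\tau)$ with the top-dimensional tree simplices of $\sphere(\iota(\tau))$, not of $\sphere(\tau)$ itself. The paper closes this with the easy Lemma~\ref{lem:reflect-sphere}: since $\refl$ is a simplicial automorphism of the whole complex, $\sphere(\tau)$ and $\sphere(\refl(\tau))$ are isomorphic, hence have the same number of top-dimensional tree simplices. Finally, both lemmas are stated and proved uniformly for noncrossing hypertrees with any number of hyperedges, so the general statement follows directly with no separate ``pass to the link of a face'' argument required.
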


As $\tau$ varies, these numbers vary as well, from a minimum that is a
power of $2$ to a maximum, conjecturally, that is a Catalan
number.\footnote{The Catalan numbers are the maximum values in all the
  cases where computer investigation is feasible, but I do not
  currently have a proof that this is the maximal value.}  The
structure of these extremal simplicial spheres is easy to describe.
The minimum value corresponds to an orthoplex, the generalization of
the octahedron also known as a cross-polytope, and the maximum value
corresponds to a simplicial associahedron.

\begin{main}[Associahedra]\label{main:associahedra}
  Let $\tau$ be a noncrossing tree.  If the tree chamber labeled
  $\tau$ consists of a single partition chamber then the apartment
  labeled $\tau$ is a simplicial associahedron.  In addition, the
  variety of simplicial associahedra produced in this way include all
  of the simplicial associahedra that are normal fans to the type $A$
  simple associahedra constructed by Hohlweg and Lange.
\end{main}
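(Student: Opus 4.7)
The plan is to first characterize combinatorially those noncrossing trees $\tau$ for which the tree chamber $\chamber(\tau)$ coincides with a single partition chamber. Since every tree chamber is obtained by amalgamating one or more partition chambers in the passage from the noncrossing partition link to the noncrossing hypertree complex, this degeneracy condition should pick out a restricted family of noncrossing trees, and I expect this family to consist of those noncrossing trees whose edge set forms a zig-zag path along the boundary of the polygon. I would verify this by analyzing when the local Boolean amalgamation produces no identifications, which reduces to a vertex-local condition on the orientations and multiplicities of the incident edges.

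Having identified the relevant family, the next step is to describe the apartment $\apartment(\tau)$ as a simplicial complex. By Theorem~\ref{main:geometry} the apartment is a top-dimensional sphere in the piecewise spherical structure on the noncrossing hypertree complex, and its faces are indexed by noncrossing hypertrees refining the partition chains compatible with $\tau$. I would show that under the zig-zag characterization above, this face poset is isomorphic to the face poset of a simplicial associahedron of type $A$. The cleanest way to do this is to produce an explicit bijection between the faces of $\apartment(\tau)$ and the sets of pairwise noncrossing diagonals of the associated polygon, which is the standard description of the associahedron.

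For the second assertion, I would recall that the Hohlweg--Lange construction produces realizations of the simple associahedron parameterized by sign sequences, or equivalently by orientations of the type $A$ Coxeter diagram, and that the normal fan of each realization is a simplicial associahedron. I would then exhibit a bijection between these sign sequences and the zig-zag noncrossing trees from the first step, the bijection reading the up/down pattern of the zig-zag as the sign sequence. Finally, I would verify that $\apartment(\tau)$ matches the Hohlweg--Lange normal fan not only combinatorially but as a piecewise spherical metric complex, by comparing the metric on $\apartment(\tau)$ inherited from the noncrossing partition link with the angles between the adjacent maximal cones of the normal fan.

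The main obstacle will be the final metric identification. The combinatorial bijection between zig-zag noncrossing trees and Hohlweg--Lange parameters should be transparent once the characterization in the first step is in place, and the combinatorial isomorphism of face posets should follow by a standard count. Pinning down the isometry, however, requires carefully translating between two distinct coordinate systems: the spherical metric coming from the orthoscheme structure on the noncrossing partition lattice, and the metric on the Hohlweg--Lange normal fan coming from the Euclidean realization of the associahedron. The likely route is to interpret both metrics via the Coxeter element associated to $\tau$ and to identify the corresponding apartment with the standard Coxeter apartment of the associated spherical building.
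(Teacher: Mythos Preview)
Your characterization in the first step is backwards. By Lemma~\ref{lem:po-simplices} the number of partition chambers making up the tree chamber $\chamber(\tau)$ equals the number of linear extensions of the hyperedge poset $\poset(\tau)$, so $\chamber(\tau)$ is a single partition chamber precisely when $\poset(\tau)$ is already a total order. The noncrossing trees with this property are the \emph{caterpillars}: trees whose boundary edges form a connected path (the backbone), with the remaining edges being diagonals attached as legs. Stars and border paths are the extreme cases, and there are $(n+1)2^{n-1}$ such trees in total. The zig-zag trees you describe --- paths crossing back and forth so that each edge has one of two slopes --- are at the \emph{opposite} extreme: their hyperedge poset is a fence, which has the maximal number of linear extensions (the tangent/secant numbers), and the corresponding apartment $\apartment(\tau)$ is a cross-polytope with only $2^{n-1}$ tree chambers, not an associahedron.

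Once the characterization is corrected, the second step of your plan can be made to work, though the paper takes a slightly different route: rather than building a direct bijection between faces of $\apartment(\tau)$ and polygon dissections, it uses the caterpillar structure to relabel the polygon vertices so as to match the Hohlweg--Lange labeling scheme, and then identifies $\apartment(\tau)$ with Reading's $c$-Cambrian fan for the Coxeter element $c$ determined by that labeling. The binary choice you anticipated (your up/down pattern) does survive in the corrected picture --- it becomes the left/right choice at each step when walking up the annular poset of basic hypertrees to build a caterpillar --- and this is what matches the $2^{n-1}$ orientations of the type~$A$ Coxeter diagram parameterizing the Hohlweg--Lange realizations. Your concern about the metric identification is legitimate but somewhat overstated: the paper treats this as a consequence of identifying the apartment with a Cambrian fan rather than establishing an isometry from first principles.
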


The simplicial associahedra that this theorem produces are closely
related to Reading's $c$-Cambrian fans \cite{Re06,ReSp09}.

\subsection*{Structure of the article}

The first three sections establish basic properties of the noncrossing
hypertree complex and the next two sections establish basic properties
of the noncrossing partition link.  The proof of
Theorem~\ref{main:geometry} is spread over the three sections after
that.  Once these foundations are in place, the remaining main
theorems are proved in the final sections.

\section{Hyperforests and Hypertrees\label{sec:htrees}}

A simple graph is a set of vertices and a collection of $2$-element
subsets called edges, a forest is simple graph with no cycles, and a
tree is a connected forest.  Hypergraphs, hyperforests and hypertrees
are expanded versions of these notions.

\begin{defn}[Hypergraphs]
  A \emph{hypergraph} is a collection of vertices, usually identified
  with the first few natural numbers, and a collection of subsets of
  the vertices called \emph{hyperedges} where each hyperedge must
  contain at least $2$ elements.  Familiar graph definitions are
  slightly modified to accommodate this change.  A \emph{path of
    length $k$} in a hypergraph is an alternating sequence
  $(v_0,e_1,v_1,\ldots,e_k,v_k)$ of vertices $v_i$ and hyperedges
  $e_i$, starting and ending with a vertex, where each hyperedge $e_i$
  contains the vertices $v_{i-1}$ and $v_i$.  Its \emph{endpoints} are
  $v_0$ and $v_k$, its length is $k$ and paths of positive length are
  \emph{nontrivial}.  A path is \emph{simple} if all of its edges and
  vertices are distinct, a \emph{cycle} if its endpoints are equal,
  and a \emph{simple cycle} if all of its edges are distinct and all
  of its vertices are distinct except that its endpoints are equal.  A
  hypergraph is \emph{connected} if every pair of vertices are the
  endpoints of a path, a \emph{hyperforest} if there are no nontrivial
  simple cycles and a \emph{hypertree} if it is a connected
  hyperforest.  A pair of hyperedges are said to be \emph{weakly
    disjoint} when they have at most one vertex in common and the
  hyperedges of a hyperforest are pairwise weakly disjoint because two
  hyperedges with two common vertices form a simple cycle.  A
  hypergraph whose hyperedges are pairwise disjoint can be viewed as a
  \emph{partition}. It has a block for every hyperedge and a singleton
  block for each vertex not contained in any hyperedge.  A hypertree
  is shown in Figure~\ref{fig:hypertree}.
\end{defn}

\begin{figure}
  \begin{tikzpicture}[scale=.7]
    \coordinate (1) at (1,2);
    \coordinate (2) at (2,1);
    \coordinate (3) at (1,1);
    \coordinate (4) at (1,0);
    \coordinate (5) at (0,0);
    \coordinate (6) at (0,-1);
    \coordinate (7) at (-1,0);
    \coordinate (8) at (-1,1);
    \coordinate (9) at (0,1);
    \draw[BlueLine] (1)--(3)--(2);
    \draw[BluePoly] (3)--(4)--(5)--cycle;
    \draw[BlueLine] (5)--(6);
    \draw[BluePoly] (5)--(7)--(8)--(9)--cycle;
    \foreach \x in {1,2,...,9} {\fill (\x) circle (1mm);}
    \draw (1) node[above] {\small $1$};
    \draw (2) node[right] {\small $2$};
    \draw (3) node[below right] {\small $3$};
    \draw (4) node[below right] {\small $4$};
    \draw (5) node[below right] {\small $5$};
    \draw (6) node[below] {\small $6$};
    \draw (7) node[below left] {\small $7$};
    \draw (8) node[above left] {\small $8$};
    \draw (9) node[above] {\small $9$};
  \end{tikzpicture}
  \caption{A hypertree on $9$ vertices with hyperedges $\{1,3\}$,
    $\{2,3\}$, $\{3,4,5\}$, $\{5,6\}$ and
    $\{5,7,8,9\}$.\label{fig:hypertree}}
\end{figure}
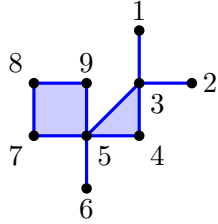

Although the focus of this article is noncrossing hypertrees,
hyperforests are useful when establishing results by induction.
References to hyperforests are rare (one of few is \cite{Kn05}) but
they are definitely worthy studying since the naturally defined
bounded graded hyperforest poset includes trees, hypertrees, forests
and partitions, as well as the noncrossing versions of these objects.

\begin{defn}[Hyperforest poset]
  The set of all hyperforests on a fixed number of vertices comes
  equipped with a natural partial order where $\sigma \leq \tau$ if
  and only if every hyperedge of $\sigma$ is a subset of a hyperedge
  of $\tau$.\footnote{This ordering agrees with the standard ordering
    on partitions, but it is the dual of the standard ordering on
    hypertrees found in the literature.}  The result is a bounded
  graded poset $\hforests_{n+1}$ where $n+1$ denotes the number of
  vertices.  The unique minimal element is the \emph{trivial
    hyperforest} $t$ with no hyperedges.  The unique maximal element
  has a single hyperedge containing all the vertices which, in
  anticipation of its later uses, I call the \emph{Coxeter
    hypertree}. There are two types of covering relations.  The
  \emph{first type} adds a new edge, i.e. a hyperedge with only $2$
  vertices, whose vertices belong to distinct connected components.
  The \emph{second type} involves replacing two hyperedges that share
  a vertex with the hyperedge that is their union.  This process is
  called \emph{merging} and its inverse is called \emph{splitting}.
  Note that a hyperedge needs to contain at least $3$ vertices in
  order to be splittable.  From this description of the covering
  relations, it is easy to show that the \emph{height} of a
  hyperforest $\tau$ is $2\card{V}- \card{E} - 2\card{C}$ where $V$ is
  the set of vertices, $E$ is the set of hyperedges and $C$ is the set
  of connected components.
 \end{defn}

Let $\tau$ be a hyperforest with $n+1$ vertices.  It has height $0$ if
$\tau$ is the trivial hyperforest $t$, $\card{E}$ if $\tau$ is a
forest, $n$ if $\tau$ is a tree, $2n - \card{E}$ if $\tau$ is a
hypertree and $2n-1$ if $\tau$ is the Coxeter hypertree.  The choice
of the variable $n$ to denote the number of vertices minus one, rather
than the number of vertices itself, is not an accident or a mistake.
It is a notational irritant with potential future benefits.

\begin{rem}[Rank]
  The symmetric group $\sym_{n+1}$ naturally acts on $\R^{n+1}$ by
  permuting coordinates and it also acts on the $n$-dimensional
  subspace of vectors where the sum of the coordinates is equal to
  $0$.  This action shows that the symmetric group $\sym_{n+1}$ is an
  example of a finite group of euclidean isometries generated by
  reflections, the reflections in this case being the transpositions
  $(i,j)$ that switch the $i^{\textrm{th}}$ and $j^{\textrm{th}}$
  coordinates pointwise fixing the hyperplane defined by the equation
  $x_i = x_j$.  More specifically, the symmetric group $\sym_{n+1}$ is
  the finite Coxeter group of type $A_n$, where the $n$ indicates,
  among other things, the size of the smallest reflection generating
  set, a number called its \emph{rank}.  All of the results presented
  here, conjecturally at least, are a ``type $A$'' version of a more
  general theory.  It is with these future developments in mind that
  the variable $n$ is usually used to denote the rank of the
  associated Coxeter group.  In a currently hypothetical general
  theory of hyperforests for arbitrary Coxeter groups, the set
  $\hforests_{n+1}$ of all hyperforests on $n+1$ vertices would be
  denoted $\hforests(A_n)$.
\end{rem}

\begin{figure}
  \begin{tikzpicture}[scale=.7]
    \def\r{4}
    \def\s{4}
    \draw (0,-\r) to [out=30,in=270] (2+\s,0) to [out=90,in=-30] (0,\r);
    \draw (0,-\r) to [out=150,in=270] (-2-\s,0) to [out=90,in=-150] (0,\r);
    \draw (0,-\r) to [out=60,in=270] (2-\s,0) to [out=90,in=-120] (0,\r);
    \draw (0,-\r) to [out=120,in=270] (-2+\s,0) to [out=90,in=-60] (0,\r);
    \draw (-\s,0) ellipse (2cm and 7mm);
    \draw (-\s,0) node {$\trees_{n+1}$};
    \draw (-2.75,.5*\r) node {$\htrees_{n+1}$};
    \draw (-2.75,-.5*\r) node {$\forests_{n+1}$};
    \draw (0,0) node {$\hforests_{n+1}$};
    \draw (\s,0) node {$\parts_{n+1}$};
    \fill[color=black] (0,\r) circle (1mm);
    \fill[color=black] (0,-\r) circle (1mm);
  \end{tikzpicture}
  \caption{The poset $\hforests_{n+1}$, its subposets
    $\forests_{n+1}$, $\parts_{n+1}$ and $\htrees_{n+1}$ and its
    subset $\trees_{n+1}$.\label{fig:hfp}}
\end{figure}

The hyperforest poset and its various substructures are shown in
Figure~\ref{fig:hfp}.  A collection of pairwise disjoint edges defines
both a forest and a partition, hence the partial overlap.

\begin{rem}[Forests, trees and hypertrees]
  The forests, trees and hypertrees inside the hyperforest poset can
  be identified by focusing on the two types of covering relations.
  Forests are connected to the trivial hyperforest by covering
  relations that add a new edge, hypertrees are connected to the
  Coxeter hypertree by covering relations that merge two hyperedges,
  and trees are on the boundary between these two subposets.
  Moreover, every maximal chain from the trivial hyperforest to the
  Coxeter hypertree contains exactly $n$ covering relations that add
  edges and $n-1$ covering relations that merge hyperedges. If the
  covering relations are labeled by the endpoints of the new edge in
  the first case and the pair of merged hyperedges in the second, then
  the steps along any maximal chain can be reordered so that all of
  the edge additions take place before any of the hyperedge mergers
  and this reordering does not change the set of labels on the
  covering edges.  The reordered maximal chain passes through a tree
  at height $n$ with forests below and hypertrees above.
\end{rem}

\begin{defn}[Hypertree poset]
  The set of all hypertrees on $n+1$ vertices form an induced subposet
  $\htrees_{n+1}$ of the hyperforest poset.  The grading and the
  covering relations in this \emph{hypertree poset} come from the
  hyperforest poset but all covering relations are of the second type.
  It is bounded above by the Coxeter hypertree and its minimal
  elements are trees.  Kalikow \cite{Ka99} and Warme \cite{Wa98}
  proved that its size is given by the formula $\card{\htrees_{n+1}} =
  􏰆\sum_k (n+1)^{k-1} S(n,k)$, where $S(n,k)$ is a Stirling number of
  the second kind.  For more information see the sequence
  \texttt{oeis:A030019} in the Online Encyclopedia of Integer
  Sequences \cite{oeis} and for its uses in geometric group theory,
  see \cite{MM96, BMMM01, MM04, JMM06, JMM07, Pi12} .
\end{defn}

The \emph{degree} of a vertex is the number of hyperedges containing
it and the total number of hyperedges is determined by its vertex
degrees.

\begin{prop}[Vertex degrees]\label{prop:degrees}
  If $\tau$ is a hypertree with vertices $V$ and hyperedges $E$, then
  $\sum_{v\in V} (\deg(v) -1) = \card{E} -1$. In other words, if one
  expects there to be one hyperedge and vertices to have degree $1$,
  then the sum of the excess vertex degrees is the number of excess
  hyperedges.
\end{prop}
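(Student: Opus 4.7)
The plan is to prove the equivalent identity $\sum_{e\in E} (|e|-1)=|V|-1$, obtained by rewriting the left-hand side of the claim via $\sum_{v\in V}(\deg(v)-1) = \sum_{v\in V}\deg(v) - |V| = \sum_{e\in E}|e| - |V|$. The cleanest route is to introduce the bipartite incidence graph and recognize it as an ordinary tree, which immediately gives the edge count.

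More precisely, I would build the bipartite graph $B$ whose vertex set is the disjoint union $V\sqcup E$ and whose edges are the incidences $\{v,e\}$ with $v\in e$. Two double counts are available: the number of edges of $B$ equals $\sum_{v\in V}\deg(v)$ when summed over the $V$-side and equals $\sum_{e\in E}|e|$ when summed over the $E$-side. So it suffices to show that $B$ is a tree, in which case it has $|V|+|E|-1$ edges, giving $\sum_{v\in V}\deg(v)=|V|+|E|-1$ and hence the claim.

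To show $B$ is a tree I would verify connectedness and acyclicity using the definition of a hypertree. For connectedness, any two elements of $V\sqcup E$ can be joined in $B$ by reading off a hypergraph path $(v_0,e_1,v_1,\ldots,e_k,v_k)$ in $\tau$ (which exists because $\tau$ is connected) as an ordinary walk $v_0\,e_1\,v_1\cdots e_k\,v_k$ in $B$, possibly prepending or appending an incidence edge if an endpoint is a hyperedge rather than a vertex. For acyclicity, suppose $B$ contains a cycle; since $B$ is bipartite with no multi-edges, the cycle has length $\ge 4$ and has the form $v_0\,e_1\,v_1\,e_2\cdots e_k\,v_0$ with $k\ge 2$, distinct $e_i$ and distinct $v_i$. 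Reading this back as $(v_0,e_1,v_1,\ldots,e_k,v_0)$ produces a nontrivial simple cycle in $\tau$, contradicting the hyperforest condition.

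I do not expect a real obstacle here: the only step demanding care is checking that cycles in $B$ really pull back to \emph{simple} cycles in the hypergraph sense defined in the excerpt, i.e. that distinctness of the $v_i$ and of the $e_i$ on the $B$-side matches the hypergraph notion of simple cycle. Since both notions require edges distinct and internal vertices distinct with only the endpoints coinciding, this translation is immediate, and the proof is complete.
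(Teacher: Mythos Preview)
Your argument is correct, but it is a genuinely different route from the paper's. The paper dispatches the identity in one line of induction along the hypertree poset: the equation holds for the Coxeter hypertree (every vertex has degree~$1$ and $|E|=1$), and each splitting of a hyperedge increases both $\sum_v(\deg(v)-1)$ and $|E|-1$ by exactly one, since the shared vertex gains one to its degree and one new hyperedge appears.

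Your approach instead builds the bipartite incidence graph $B$ on $V\sqcup E$ and shows it is an ordinary tree, so that its $|V|+|E|-1$ edges equal $\sum_v\deg(v)=\sum_e|e|$. This is more self-contained (it does not rely on the poset/covering-relation machinery the paper has just set up) and has the pleasant side effect of simultaneously proving the companion identity $\sum_{e\in E}(|e|-1)=|V|-1$, which the paper establishes separately as Proposition~\ref{prop:edge-sizes} by a different argument. The paper's proof is shorter in context; yours is more portable and makes the tree structure of the incidence graph explicit, which is a nice conceptual bonus.
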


\begin{proof}
  The equation holds for the Coxeter hypertree and it is preserved as
  hyperedges are split.
\end{proof}

The \emph{size} of a hyperedge is the number of vertices it contains.
For hypertrees the number of vertices is determined by its hyperedge
sizes.

\begin{prop}[Hyperedge sizes]\label{prop:edge-sizes}
  For a connected hypergraph $\tau$ with vertices $V$ and hyperedges
  $E$, $\sum_{e \in E} (\size(e)-1) = \card{V}-1$ if and only if
  $\tau$ is a hypertree.
\end{prop}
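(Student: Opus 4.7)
The plan is to establish both directions simultaneously via a component-counting argument that tracks how hyperedges merge connected components.

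First I would order the hyperedges $e_1,\ldots,e_m$ arbitrarily and build $\tau$ up one hyperedge at a time. Let $\tau_i$ denote the sub-hypergraph with vertex set $V$ and hyperedge set $\{e_1,\ldots,e_i\}$, and let $c_i$ denote its number of connected components, so $c_0 = \card{V}$ and $c_m = 1$ since $\tau$ is connected. The key observation is that adding a hyperedge $e$ whose vertices fall into $k$ distinct components of the current hypergraph reduces the component count by exactly $k-1$, and necessarily $k \leq \size(e)$. Summing these reductions gives $\card{V}-1 = \sum_i (c_{i-1}-c_i) \leq \sum_i (\size(e_i)-1)$, so $\sum_{e \in E}(\size(e)-1) \geq \card{V}-1$, with equality if and only if at each step the vertices of $e_i$ lie in $\size(e_i)$ distinct components of $\tau_{i-1}$.

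For the forward direction (hypertree implies the equation), I would argue that the distinctness condition at each step follows from the absence of nontrivial simple cycles. If two vertices $u,v$ of $e_i$ already lay in the same component of $\tau_{i-1}$, then a simple $u$-to-$v$ path in $\tau_{i-1}$ (which does not use $e_i$, since $e_i \notin \tau_{i-1}$) concatenated with the single hyperedge $e_i$ would produce a nontrivial simple cycle in $\tau$, contradicting the hypertree hypothesis. Hence equality holds throughout, and so $\sum_{e \in E}(\size(e)-1) = \card{V}-1$.

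For the backward direction (equation implies hypertree), I would argue the contrapositive. If $\tau$ contains a nontrivial simple cycle $(v_0,e_{i_1},v_1,\ldots,e_{i_k},v_k=v_0)$, I would choose any ordering of the hyperedges in which $e_{i_k}$ comes after $e_{i_1},\ldots,e_{i_{k-1}}$. Then at the moment $e_{i_k}$ is added, the path through the earlier cycle-hyperedges already connects $v_0$ and $v_{k-1}$, so these two distinct vertices of $e_{i_k}$ sit in the same component of $\tau_{i_k-1}$. The reduction at this step is strictly less than $\size(e_{i_k})-1$, so the overall inequality becomes strict and the equation fails. The main obstacle is a small subtlety in the forward direction: one should verify that a shortest walk between $u$ and $v$ in $\tau_{i-1}$ really is a simple path (so that it combines with $e_i$ into a genuine simple cycle, not merely a closed walk); once that is noted, the rest is routine bookkeeping about how component counts behave under hyperedge addition.
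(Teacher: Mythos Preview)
Your proof is correct and follows essentially the same approach as the paper's: both add the hyperedges one at a time, derive the inequality $\sum_e(\size(e)-1)\ge\card{V}-1$ from a connectivity count, and show that equality holds precisely when no hyperedge has two vertices already linked by earlier hyperedges (equivalently, precisely when there is no simple cycle). The only cosmetic difference is bookkeeping: the paper uses a connected ordering and counts \emph{new vertices} contributed at each step, whereas you allow an arbitrary ordering and count \emph{component mergers}, which makes the argument slightly more uniform but is otherwise the same idea.
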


\begin{proof}
  In a connected hypergraph it is possible to order the hyperedges so
  that each hyperedge except the first has at least one vertex in the
  union of the hyperedges earlier in the list.  Thus for all connected
  hypergraphs $\sum_{e \in E} (\size(e)-1) \geq \card{V}-1$.  When
  $\tau$ is a hypertree each hyperedge in this ordering has exactly
  one vertex with the union of hyperedges earlier in the list and the
  inequality is an equality.  Conversely, suppose $\tau$ contains is a
  simple cycle. When the last hyperedge of the simple cycle is added,
  at least two of its vertices are not new and the inequality is
  strict.
\end{proof}


\begin{defn}[Partitions]\label{def:hf-part}
  Partitions form a bounded graded lattice, but it is important to
  note that its natural grading and covering relations are distinct
  from those of the hyperforest poset.  A covering relation in the
  partition lattice involves joining two blocks into a bigger block,
  but this changes $1$, $2$ or $3$ levels in hyperforest poset
  depending on whether neither, one or both of the blocks are more
  than a single element.  One way to identify the partitions inside
  the hyperforest poset is to restrict the partial order on
  $\hforests_{n+1}$ to the transitive closure of the second type of
  covering relation that merge two hyperedges sharing a vertex.  The
  connected components in the resulting order each have a unique
  maximum element that is a partition and all partitions are maximal
  in their connected component.  Thus the partitions index the
  connected components.  For each hyperforest $\tau$, the unique
  partition $\sigma$ above $\tau$ in this restricted ordering is
  obtained from $\tau$ by iteratively merging hyperedges with a vertex
  in common until no more mergers of this type are possible.  It is
  called the \emph{partition of $\tau$} and it is the partition
  determined by the connected components of $\tau$.  The map sending
  hyperforests to their partitions is an example of a closure
  operator.
\end{defn}

\section{Noncrossing Hypertrees\label{sec:ncht}}

This section introduces noncrossing versions of all of the
combinatorial structures defined in \S\ref{sec:htrees}.  The key
definition is that of noncrossing and weakly noncrossing subsets of
vertices of a convex polygon.

\begin{defn}[Noncrossing subsets]
  Two subsets of the vertices of a convex polygon in the plane are
  called \emph{noncrossing} when their convex hulls are completely
  disjoint and \emph{weakly noncrossing} when their convex hulls have
  at most one vertex, but no other points, in common.
\end{defn}

\begin{figure}
  \begin{tikzpicture}[scale=.7]
    \begin{scope}[xshift=-3cm]
      \foreach \x in {1,2,...,9} {\coordinate (\x) at (90-40*\x:1.5cm);}
      \filldraw[YellowPoly] (0,0) circle (2.5cm);
      \draw[BlueLine] (1)--(3)--(2) (4)--(1)--(5)--(6);
      \draw[BlueLine] (5)--(9)--(7) (9)--(8);
      \foreach \x in {1,2,...,9} {\fill (\x) circle (1mm);}
      \foreach \x in {1,2,...,9} {\draw (90-40*\x:2cm) node {\small
          \x};}
    \end{scope}
    \begin{scope}[xshift=3cm]
      \foreach \x in {1,2,...,9} {\coordinate (\x) at (90-40*\x:1.5cm);}
      \filldraw[YellowPoly] (0,0) circle (2.5cm);
      \draw[BlueLine] (1)--(3)--(2);
      \draw[BluePoly] (3)--(4)--(5)--cycle;
      \draw[BlueLine] (5)--(6);
      \draw[BluePoly] (5)--(7)--(8)--(9)--cycle;
      \foreach \x in {1,2,...,9} {\fill (\x) circle (1mm);}
      \foreach \x in {1,2,...,9} {\draw (90-40*\x:2cm) node {\small \x};}
    \end{scope}
    \end{tikzpicture}
    \caption{A noncrossing tree and a noncrossing hypertree.\label{fig:ncht}}
\end{figure}
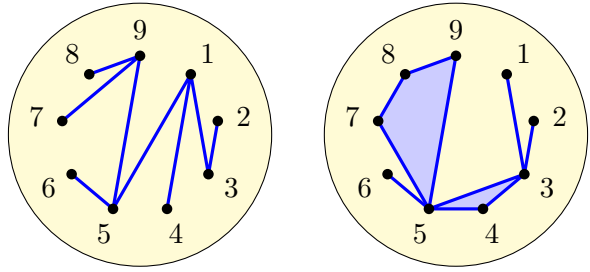

\begin{defn}[Noncrossing hypergraphs]\label{def:ncht}
  A hypergraph $\tau$ whose vertices have been identified with those
  of a convex polygon in the plane is called a \emph{noncrossing
    hypergraph} when its hyperedges, thought of as subsets of
  vertices, are pairwise weakly noncrossing.  A \emph{noncrossing
    tree, forest, hypertree, hyperforest}, or \emph{partition} is a
  noncrossing hypergraph that lives in this subcategory.  Note that
  because the hyperedges of a partition are completely disjoint, they
  are noncrossing in the strong sense and not merely weakly
  noncrossing.  When describing noncrossing hypergraphs, the vertices
  of the convex $k$-gon are usually labeled with the set $[k] :=
  \{1,2,\ldots,k\}$ where these integers are viewed as representing
  the integers modulo $k$.  They are placed in the order the vertices
  appear in the clockwise oriented boundary of the polygon.
\end{defn}

A noncrossing tree and noncrossing hypertree are shown in
Figure~\ref{fig:ncht}, and note that the noncrossing hypertree on the
right is the vertex-labeled hypertree of Figure~\ref{fig:hypertree}
redrawn inside a regular $9$-gon.  The noncrossing versions restrict
the size of the various sets and posets but their internal structure
remains much the same.  The relationships among the sets
$\trees_{n+1}$ and $\nctrees_{n+1}$ and the posets $\htrees_{n+1}$ and
$\nchtrees_{n+1}$ are shown in Figure~\ref{fig:nchtrees}.  A similar
diagram could be drawn for all of the substructures of the hyperforest
poset shown in Figure~\ref{fig:hfp}.

\begin{figure}
  \begin{tikzpicture}[scale=.7]
    \draw (0,0) ellipse (6cm and 1.5cm);
    \draw[line width=3pt, color=white] (-3,0) to [out=90,in=220] (0,6) to [in=90,out=320] (3,0);
    \draw (-3,0) to [out=90,in=220] (0,6) to [in=90,out=320] (3,0);
    \draw (-6,0) to [out=90,in=-160] (0,6) to [in=90,out=-20] (6,0);
    \draw (0,0) ellipse (3cm and 1cm);
    \draw (-4.5,0) node {$\trees_{n+1}$};
    \draw (0,0) node {$\nctrees_{n+1}$};
    \draw (-3.5,3) node {$\htrees_{n+1}$};
    \draw (0,3) node {$\nchtrees_{n+1}$};
    \fill[color=black] (0,6) circle (1mm);
  \end{tikzpicture}
  \caption{The poset $\htrees_{n+1}$, its subposet $\nchtrees_{n+1}$ and its
    subsets $\trees_{n+1}$ and $\nctrees_{n+1}$..\label{fig:nchtrees}}
\end{figure}

\begin{rem}[Noncrossing trees]
  The size of the set $\nctrees_{n+1}$ of all noncrossing trees on
  $n+1$ vertices is the generalized Catalan number $\frac{1}{2n+1}
  \binom{3n}{n}$, also known as the Fu\ss-Catalan number
  $\catalan^{(2)}$ of type $A_n$.  This was proved by Noy \cite{No98}
  and Panholzer and Prodinger \cite{PaPr02} among others.  For
  additional information, see the sequence \texttt{oeis:A001764} in
  \cite{oeis}.
\end{rem}

\begin{defn}[Noncrossing hypertree poset]\label{def:ncht-poset}
  The poset $\nchtrees_{n+1}$ of noncrossing hypertrees on $n+1$
  vertices is the induced subposet of $\htrees_{n+1}$ restricted to
  those hypertrees that are noncrossing.  The Coxeter hypertree
  remains the maximum element, the set $\nctrees_{n+1}$ of noncrossing
  trees are the minimal elements, the new covering relations are a
  subset of the old covering relations and the grading is as before.
\end{defn}

The covering relations in the noncrossing hypertree poset are
particularly important.  Since they correspond to covering relations
in the hypertree poset, they involve merging two hyperedges that share
a vertex, but there are restrictions.

\begin{defn}[Local linear orderings]\label{def:local-order}
  Let $v$ be a vertex of a noncrossing hyperforest $\tau$.  The
  $\deg(v)$ hyperedges of $\tau$ that contain $v$ have a \emph{local
    linear order} defined by standing at $v$ facing towards the
  interior of the polygon and linearly ordering these hyperedges as
  they occur from left to right.  Concretely, for distinct hyperedges
  $e$ and $e'$ containing $v$, $e < e'$ if and only if $e$ is to the
  left of $e'$ from the perspective of $v$ looking towards the
  interior of the polygon.  Note that when two hyperedges sharing a
  vertex in a noncrossing hyperforest are merged, the result remains a
  noncrossing hyperforest if and only if these hyperedges are adjacent
  in the local linear ordering of their shared vertex.
\end{defn}

These local linear orderings can be combined into a single poset
containing a lot of information.

\begin{defn}[Hyperedge posets]\label{def:edge-posets}
  Let $\tau$ be a noncrossing hyperforest.  The \emph{hyperedge poset
    of $\tau$} is a poset $\poset(\tau)$ whose elements are the
  hyperedges of $\tau$ and whose partial order is the transitive
  closure of the covering relations from local linear orderings of the
  hyperedges at each vertex of $\tau$.  Here are two examples to help
  clarify the definition.  The noncrossing hypertree shown on the
  right in Figure~\ref{fig:ncht} has an hyperedge poset that is
  linear.  In particular, its hyperedges are ordered as follows:
  $\{5,6\} < \{5,7,8,9\} < \{3,4,5\} < \{1,3\} < \{2,3\}$.  The
  noncrossing tree shown on the left has a more complicated hyperedge
  poset whose Hasse diagram is shown in Figure~\ref{fig:edge-poset}.
\end{defn}

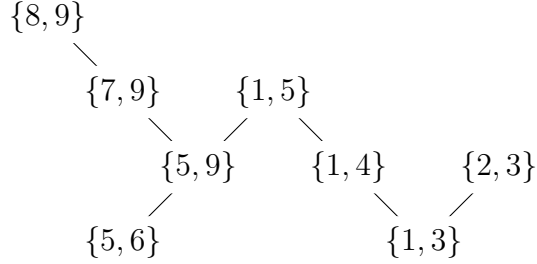
\begin{figure}
  \begin{tikzpicture}[scale=1]
    \draw (5,1)--(4,0)--(2,2)--(0,0) (1,1)--(-1,3);
    \begin{scope}
      \tikzstyle{every node}=[fill=white,rounded corners]
      \draw (0,0) node {$\{5,6\}$};
      \draw (1,1) node {$\{5,9\}$};
      \draw (2,2) node {$\{1,5\}$};
      \draw (3,1) node {$\{1,4\}$};
      \draw (4,0) node {$\{1,3\}$};
      \draw (5,1) node {$\{2,3\}$};
      \draw (0,2) node {$\{7,9\}$};
      \draw (-1,3) node {$\{8,9\}$};
    \end{scope}
  \end{tikzpicture}
\caption{The Hasse diagram for the hyperedge poset of the noncrossing
  tree shown on the left in Figure~\ref{fig:ncht}.  The hyperedges in
  the covering relations that lie in a straight line are locally
  linearly ordered by the vertex they share.\label{fig:edge-poset}}
\end{figure}

For an algebraic reason for this ordering, see Lemma~\ref{lem:po-nc}.
The examples illustrate key properties of hyperedge posets.

\begin{prop}[Hyperedge posets]\label{prop:edge-posets}
  For every noncrossing hypertree $\tau$, the Hasse diagram of its
  hyperedge poset $\poset(\tau)$, viewed as an undirected graph, is a
  tree. Similarly, the Hasse diagram of the hyperedge poset of a
  noncrossing hyperforest, viewed as an undirected graph, is a forest.
  In both cases, the edges of the Hasse diagram correspond to pairs of
  hyperedges that share a vertex and are adjacent in its local linear
  ordering.
\end{prop}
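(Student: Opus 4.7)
The plan is to analyze the undirected graph $G$ whose vertex set is the set of hyperedges of $\tau$ and whose edges are exactly the pairs of hyperedges that are adjacent in some local linear ordering as in Definition~\ref{def:local-order}. Once $G$ is shown to be a tree (respectively a forest), I will deduce that it agrees with the Hasse diagram of $\poset(\tau)$. The key numerical fact is that a graph on $n$ vertices with $c$ connected components and exactly $n - c$ edges is automatically a disjoint union of $c$ trees, so I aim to pin down these three counts for $G$.

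First I would count the edges of $G$. Each vertex $v$ with $\deg(v)\geq 1$ contributes $\deg(v)-1$ edges through its local linear ordering, and no edge of $G$ is contributed twice: two hyperedges of $\tau$ are weakly noncrossing, so they share at most one vertex and can therefore be adjacent in at most one local ordering. Applying Proposition~\ref{prop:degrees} componentwise to each nontrivial connected component of $\tau$ (each of which is a hypertree in its own right) gives a total of $|E|-k'$ edges, where $k'$ is the number of connected components of $\tau$ that contain at least one hyperedge. Next I would count the components of $G$. Two hyperedges in different components of $\tau$ share no vertex and so cannot be joined by any chain of local covers, giving at least $k'$ components. Conversely, any two hyperedges through a common vertex can be connected in $G$ by walking through the local linear order at that vertex, and hence any two hyperedges in the same component of $\tau$ lie in the same component of $G$, giving at most $k'$ components. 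Combined with the edge count, this forces each component of $G$ to be a tree; in particular, when $\tau$ is a noncrossing hypertree we have $k'=1$ and $G$ is a tree.

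Finally, I would verify that $G$ is the Hasse diagram of $\poset(\tau)$. Because the underlying undirected graph $G$ is a forest, the directed local-cover graph has no directed cycles and its transitive closure is a genuine partial order. If $e\to e'$ is a local cover yet $e<e''<e'$ holds in $\poset(\tau)$ for some $e''\neq e,e'$, then concatenating directed paths $e\to\cdots\to e''\to\cdots\to e'$ produces a second undirected walk from $e$ to $e'$ in $G$, which together with the edge $e\text{--}e'$ would create a cycle in the forest $G$, a contradiction; so every local cover is a covering relation of $\poset(\tau)$, and conversely every covering relation of $\poset(\tau)$ is realized by a single directed local cover since any longer directed path would exhibit an intermediate element. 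The delicate step to double-check is the edge count: it uses crucially that weak noncrossingness prevents the same pair of hyperedges from being made adjacent at two different vertices, and this is precisely where the noncrossing hypothesis enters the argument.
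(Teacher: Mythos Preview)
Your argument is correct and follows essentially the same route as the paper's proof: both pin down the tree structure by combining connectivity of $\tau$ with the edge count $\sum_v(\deg(v)-1)=|E|-1$ from Proposition~\ref{prop:degrees}. You are more explicit on two points the paper leaves implicit --- that weak noncrossingness prevents a pair of hyperedges from being adjacent at two different vertices (so the edge count has no double-counting), and that the local covers are genuine covering relations of $\poset(\tau)$ --- but the underlying strategy is the same.
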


\begin{proof}
 First note that the fact that the noncrossing hypertree $\tau$ is
 connected means the Hasse diagram of $\poset(\tau)$ must be
 connected.  Next, from the way the partial order of $\poset(\tau)$ is
 generated, the only possible covering relations are those from the
 local vertex linear orders.  These are of the form described and
 there are $\deg(v)-1$ of these for each vertex $v$, but by
 Proposiion~\ref{prop:degrees}, the total number of possible covering
 relations is the minimum number needed to connect the Hasse diagram.
 Thus, all of the possible cover relations really are cover relations
 and there are none left over to make the undirected graph underlying
 the Hasse diagram anything other than a tree.  For noncrossing
 hyperforests consider one connected component at a time.
\end{proof}

The observation made in Definition~\ref{def:local-order} implies the
following.

\begin{prop}[Merging hyperedges]\label{prop:he-merge}
  Let $\tau$ be a noncrossing hyperforest and let $e$ and $e'$ be two
  hyperedges that share a vertex. The hyperforest $\tau'$ formed by
  merging $e$ and $e'$ remains noncrossing if and only if $e$ and $e'$
  are the endpoints of a covering relation in the hyperedge poset
  $\poset(\tau)$.  Moreover, when $\tau'$ remains noncrossing, the
  Hasse diagram of its hyperedge poset $\poset(\tau')$ is obtained
  from the Hasse diagram of $\poset(\tau)$ by shrinking the covering
  edge with endpoints $e$ and $e'$ and labeling the new element by the
  merged hyperedge.
\end{prop}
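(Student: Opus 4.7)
The plan is to reduce both parts of the proposition to two ingredients already established: the final observation in Definition~\ref{def:local-order}, which characterizes when a merge preserves noncrossingness in terms of local linear orderings at the shared vertex, and Proposition~\ref{prop:edge-posets}, which identifies the covering relations of $\poset(\tau)$ with precisely those local adjacencies at shared vertices.

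For the equivalence, I would first invoke the fact that hyperedges in a hyperforest are pairwise weakly disjoint, so $e$ and $e'$ share a unique vertex $v$. Definition~\ref{def:local-order} then says that the merged hyperforest $\tau'$ is noncrossing if and only if $e$ and $e'$ are adjacent in the local linear order at $v$, and Proposition~\ref{prop:edge-posets} says that this adjacency at a shared vertex is equivalent to $\{e,e'\}$ being the endpoints of a covering relation in $\poset(\tau)$. Chaining these two equivalences yields the first statement.

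For the Hasse-diagram description, I would analyze the effect of the merge one vertex at a time and then apply Proposition~\ref{prop:edge-posets} to $\tau'$. At the shared vertex $v$, the two consecutive entries $e$ and $e'$ in the local linear order collapse into the single entry $\tilde e := e \cup e'$ occupying their combined slot; the covering relation between $e$ and $e'$ disappears while each covering relation that $e$ or $e'$ had at $v$ with a neighbor outside $\{e,e'\}$ becomes the corresponding covering relation at $v$ involving $\tilde e$. At any other vertex $u$ of $\tilde e$, weak disjointness forces exactly one of $e$ or $e'$ to contain $u$, so the local linear order at $u$ is unchanged except for the relabeling of that hyperedge as $\tilde e$. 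At vertices outside $\tilde e$, nothing changes at all. Together these cases describe precisely the edge contraction and relabeling stated in the proposition.

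The only real obstacle is bookkeeping at $v$: one must confirm that no spurious new adjacency is introduced and no old one is lost beyond the single contracted edge. This is immediate once one records that the local linear order at $v$ in $\tau'$ is obtained from that in $\tau$ by deleting the pair $(e,e')$ and inserting $\tilde e$ in its place, so that the cover relations given by Proposition~\ref{prop:edge-posets} applied to $\tau'$ match those of $\tau$ in the described way.
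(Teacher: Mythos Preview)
Your proposal is correct and follows exactly the route the paper intends: the paper gives no separate proof of this proposition, merely stating that it is implied by the observation at the end of Definition~\ref{def:local-order}, and your argument spells out precisely how that observation combines with Proposition~\ref{prop:edge-posets} to yield both the equivalence and the contraction description of the Hasse diagram. Your vertex-by-vertex bookkeeping for the second assertion is a careful expansion of what the paper leaves implicit.
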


Iteratively applying Proposition~\ref{prop:he-merge} proves the
following.

\begin{prop}[Partition of a noncrossing hyperforest]\label{prop:part-nchf}
  The partition of a noncrossing hyperforest is a noncrossing
  partition.
\end{prop}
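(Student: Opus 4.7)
The plan is to iterate Proposition~\ref{prop:he-merge} exactly as the sentence introducing the proposition suggests. Starting from the noncrossing hyperforest $\tau$, I will build a sequence $\tau = \tau_0, \tau_1, \ldots, \tau_k$ of noncrossing hyperforests in which each $\tau_{i+1}$ is obtained from $\tau_i$ by merging two hyperedges that share a vertex. If every intermediate $\tau_i$ can be kept noncrossing, then the terminal hyperforest $\tau_k$ will be a noncrossing partition, and a separate argument will identify $\tau_k$ with the partition of $\tau$ from Definition~\ref{def:hf-part}.

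The key step is showing that the sequence can be continued as long as some vertex has degree at least $2$. Suppose $\tau_i$ is noncrossing and some vertex $v$ is contained in two or more hyperedges. Then the local linear order at $v$ from Definition~\ref{def:local-order} has at least two consecutive entries $e, e'$. By Definition~\ref{def:edge-posets}, $(e, e')$ is a covering relation in $\poset(\tau_i)$, so Proposition~\ref{prop:he-merge} applies and the hyperforest $\tau_{i+1}$ obtained by merging $e$ and $e'$ is again noncrossing. Because each merger strictly decreases the number of hyperedges, the process terminates after finitely many steps in a noncrossing hyperforest $\tau_k$ in which no vertex lies in two hyperedges, i.e., a noncrossing partition.

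It remains to argue that $\tau_k$ equals the partition of $\tau$. Merging two hyperedges that share a vertex does not alter the connected components of the underlying hypergraph, so the connected components of $\tau_k$ coincide with those of $\tau$; since $\tau_k$ is a partition, its blocks are exactly the vertex sets of these components, which by Definition~\ref{def:hf-part} is the partition of $\tau$. Alternatively, one can invoke the fact that the closure operator of Definition~\ref{def:hf-part} is well defined and independent of the order of mergers, so the output of any valid merge sequence agrees with it.

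I do not expect a serious obstacle. The only subtle point is that one must merge along a covering relation of $\poset(\tau_i)$ rather than along an arbitrary pair of hyperedges sharing a vertex; if the merger is not between adjacent hyperedges in the local linear order at their shared vertex then a crossing can be introduced, and Proposition~\ref{prop:he-merge} would no longer apply. Once the merger order is chosen carefully using the local linear orders, the argument is a short induction on the number of hyperedges.
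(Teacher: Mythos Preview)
Your proposal is correct and is exactly the approach the paper takes: the paper's entire proof is the single sentence ``Iteratively applying Proposition~\ref{prop:he-merge} proves the following,'' and you have simply spelled out the details of that iteration (choosing at each step an adjacent pair in some local linear order, noting termination, and identifying the terminal object with the partition of Definition~\ref{def:hf-part}).
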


Proposition~\ref{prop:he-merge} also shows that the noncrossing
hypertrees above a fixed noncrossing hypertree $\tau$ correspond
exactly to those obtained by systematically collapsing a forest of
edges in the Hasse diagram of its hyperedge post while merging the
corresponding hyperedges.  That the result is independent of the order
in which they are collapsed and distinct for every distinct subset of
edges in the Hasse diagram should already be clear, but it will be
crystal clear once the collapsing/merging operation has been
reinterpreted as a polygon dissection.

\section{The Noncrossing Hypertree Complex\label{sec:ncht-complex}}

This section establishes a bijection between noncrossing hypertrees
and certain types of polygon dissections.  The noncrossing hypertrees
one step below the Coxeter hypertree are the key.

\begin{defn}[Basic noncrossing hypertrees]\label{def:basic}
  A hypertree with exactly two hyperedges is called a \emph{basic
    hypertree} and the basic noncrossing hypertrees are easy to
  describe.  A basic noncrossing hypertree $\tau$ is completely
  determined by the vertex its two hyperedges have in common and the
  edge on the boundary of the polygon that is not included in the
  convex hull of either hyperedge. The only restriction is that the
  shared vertex cannot be an endpoint of the missing boundary edge.
  Thus there are exactly $(n+1)(n-1) =n^2-1$ basic noncrossing
  hypertrees on $n+1$ vertices.  In each basic noncrossing hypertree,
  one of the two hyperedges is below the other in its hyperedge poset
  and it can be reconstructed from its lower hyperedge alone.  In
  Figure~\ref{fig:two-hyperedges} the $15$ basic noncrossing hypertrees on
  $5$ vertices are shown ordered according the ordering of their lower
  hyperedges under inclusion.
\end{defn}

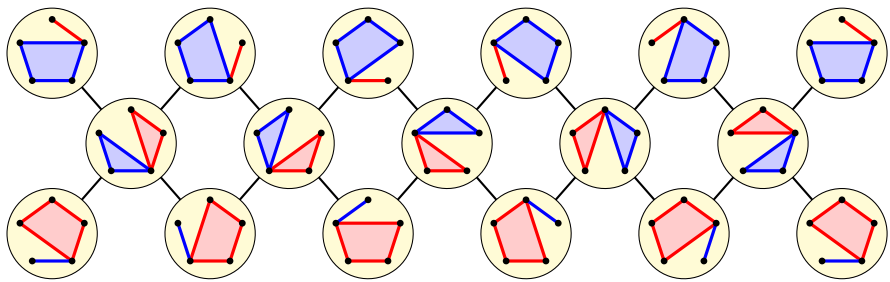
\begin{figure}
  \begin{tikzpicture}[scale=.3]
    \foreach \i in {0,1,2,3,4} {
      \draw[thick] (\i*7cm,0)--(7cm+\i*7cm,8cm);
      \draw[thick] (\i*7cm,8cm)--(7cm+\i*7cm,0);
    }
    \foreach \i in {0,1,2,3,4,5} {
      \begin{scope}[xshift=\i*7cm,rotate=\i*(-72)]
        \foreach \x in {1,2,...,10} {\coordinate (\x) at (306-72*\x:1.5cm);}
        \filldraw[YellowPoly] (0,0) circle (2cm);
        \draw[RedPoly] (2)--(3)--(4)--(5)--cycle;
        \draw[BluePoly] (1)--(5)--cycle;
        \foreach \x in {1,2,...,5} {\fill (\x) circle (1.5mm);}
      \end{scope}
    }
    \foreach \i in {0,1,2,3,4} {
      \begin{scope}[xshift=3.5cm+\i*7cm,yshift=4cm,rotate=\i*(-72)]
        \foreach \x in {1,2,...,10} {\coordinate (\x) at (306-72*\x:1.5cm);}
        \filldraw[YellowPoly] (0,0) circle (2cm);
        \draw[RedPoly] (3)--(4)--(5)--cycle;
        \draw[BluePoly] (1)--(2)--(5)--cycle;
        \foreach \x in {1,2,...,5} {\fill (\x) circle (1.5mm);}
      \end{scope}
    }
    \foreach \i in {0,1,2,3,4,5} {
      \begin{scope}[xshift=\i*7cm,yshift=8cm,rotate=\i*(-72)]
        \foreach \x in {1,2,...,10} {\coordinate (\x) at (306-72*\x:1.5cm);}
        \filldraw[YellowPoly] (0,0) circle (2cm);
        \draw[RedPoly] (3)--(4)--cycle;
        \draw[BluePoly] (1)--(2)--(4)--(5)--cycle;
        \foreach \x in {1,2,...,5} {\fill (\x) circle (1.5mm);}
      \end{scope}
    }
  \end{tikzpicture}
  \caption{The $15$ basic noncrossing hypertrees on $5$ vertices.
    Although displayed in a horizontal strip, the hypertrees on the
    left and the right are identical and should be identified to
    create an arrangement on a cylinder.  The lines indicate
    inclusions between their lower hyperedges or, equivalently,
    reverse inclusions between their upper
    hyperedges.\label{fig:two-hyperedges}}
\end{figure}

The poset of noncrossing hypertrees on a fixed number of vertices are
going to bijectively correspond to the poset of all dissections of a
polygon with twice as many vertices into even-sided subpolygons.  The
easy direction is from polygon dissections to noncrossing hypertrees.

\begin{rem}[From polygon dissections to noncrossing hypertrees]
 Given such a polygonal dissection of an even-sided polygon into
 even-sided subpolygons, one can create a noncrossing hypertree with
 half as many vertices as follows.  First assume that the vertices of
 the polygon have been alternately colored black and white and select
 the convex hull of the black vertices as the polygon for the
 noncrossing hypertree.  The convex hulls of the black vertices in
 each subpolygon are its hyperedges.  See Figure~\ref{fig:dissection}
 for an illustration.
\end{rem}

\begin{figure}
  \begin{tikzpicture}[scale=.7]
    \begin{scope}[xshift=-3cm]
      \foreach \x in {1,2,...,9} {\coordinate (\x) at (90-40*\x:1.5cm);}
      \filldraw[YellowPoly] (0,0) circle (2.5cm);
      \draw[BluePoly] (1)--(3)--(2)--cycle;
      \draw[BluePoly] (1)--(4)--(5)--cycle;
      \draw[BlueLine] (5)--(6);
      \draw[BluePoly] (5)--(7)--(8)--cycle;
      \draw[BlueLine] (8)--(9);
      \foreach \x in {1,2,...,9} {\fill (\x) circle (1mm);}
      \foreach \x in {1,2,...,9} {\draw (90-40*\x:2cm) node {\small
          \x};}
    \end{scope}
    \begin{scope}[xshift=3cm]
      \foreach \x in {1,2,...,9} {
        \coordinate (\x) at (90-40*\x:1.5cm);
        \coordinate (m\x) at (70-40*\x:1.5cm);
      }
      \filldraw[YellowPoly] (0,0) circle (2.5cm);
      \draw[very thick,color=red] (1)--(m1)--(2)--(m2)--(3)--(m3)--(4)--(m4)--(5)--(m5);
      \draw[very thick,color=red] (m5)--(6)--(m6)--(7)--(m7)--(8)--(m8)--(9)--(m9)--(1);
      \draw[very thick,color=red] (m6)--(5)--(m9)--(8) (m3)--(1);
      \foreach \x in {1,2,...,9} {
        \fill (\x) circle (1mm);
        \draw (90-40*\x:2cm) node {\small \x};
        \fill (m\x) circle (1mm);
        \fill[color=white] (m\x) circle (.7mm);
      }
    \end{scope}
  \end{tikzpicture}
  \caption{A noncrossing hypertree on $9$ vertices and the
    corresponding dissection of an $18$-gon into even-sided
    subpolygons.\label{fig:dissection}}
\end{figure}
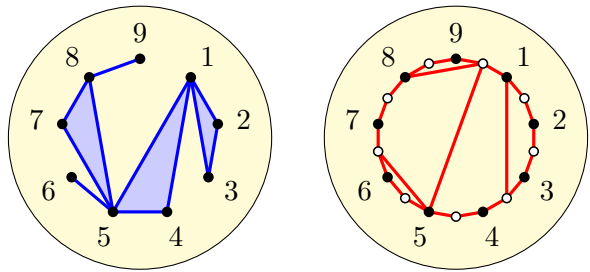

This construction is bijective because the process is reversible.

\begin{rem}[From noncrossing hypertrees to polygon dissections]
  Let $\tau$ be a noncrossing hypertree with hyperedge poset
  $\poset(\tau)$.  To create an even-sided polygon dissection of a
  polygon with twice as many vertices, first add a white dot in the
  middle of each boundary edge.  The diagonals of the dissection
  correspond to the covering relations of the hyperedge poset
  $\poset(\tau)$ as follows.  Let $e$ and $e'$ be hyperedges in $\tau$
  so that $e < e'$ is a covering relation in $\poset(\tau)$.  Using
  the Hasse diagram of $\poset(\tau)$ as a guide, systematically
  shrink all of the other covering relations, which amounts to
  iteratively merging various hyperedges.  By
  Proposition~\ref{prop:he-merge} the end result remains noncrossing
  and it now has only two hyperedges.  As remarked in
  Definition~\ref{def:basic}, this basic noncrossing hypertree is
  determined by the (black) vertex the two resulting hyperedges share
  and the (white vertex in the middle of) the boundary edge that is
  missing.  More concretely, for each vertex $v$ and for each pair of
  adjacent hyperedges that contain $v$ there is a unique midpoint of a
  boundary edge that is visible from $v$ looking between these two
  hyperedges.  The diagonal connecting this black vertex $v$ to this
  visible white vertex in the middle of the boundary edge is added.
  See Figure~\ref{fig:dissection}.  Because the endpoints have
  different colors the subpolygons are even-sided and these diagonals
  must be weakly noncrossing because when diagonals cross their black
  endpoints are in a different connected component from the black
  vertices between their white endpoints, contradicting the connected
  nature of the hypertree.
\end{rem}

A polygon dissection is determined by the set of pairwise weakly
noncrossing diagonals and these sets form a poset using inclusion.
The constructions described above establish the following.

\begin{thm}[Noncrossing hypertrees and polygon dissections]\label{thm:ncht-poly}
  There is a natural order-reversing bijection between the poset of
  noncrossing hypertrees on a fixed number of vertices and the poset
  of dissections of an even-sided polygon with twice as many
  vertices into even-sided subpolygons through the addition of
  pairwise noncrossing diagonals.
\end{thm}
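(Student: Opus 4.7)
The plan is to verify that the two constructions described in the preceding remarks are well-defined maps between the two posets, then that they are mutually inverse and order-reversing. The first direction is the easier one. Starting from a dissection of a $2(n{+}1)$-gon (with vertices alternately colored black and white) into even-sided subpolygons, I take the proposed hyperedges to be the convex hulls of the black vertices of each subpolygon, and I need to check that these form a noncrossing hypertree on the $n{+}1$ black vertices. Pairwise weak noncrossingness follows because any two subpolygons meet at most along a diagonal whose endpoints have different colors, so their black parts share at most one vertex. Each hyperedge has at least two elements since each subpolygon has at least four vertices of alternating color. Connectedness is immediate from the connectedness of the dissection, and the hypertree condition follows by verifying $\sum_{e}(\size(e)-1) = n$ (Proposition~\ref{prop:edge-sizes}); this reduces via Euler's relation for planar dissections to the elementary identity between number of subpolygons, diagonals, and half the boundary length.

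For the reverse direction, given a noncrossing hypertree $\tau$ I follow the prescription of the preceding remark: for each covering relation $e \lessdot e'$ of $\poset(\tau)$ insert the diagonal joining the vertex shared by $e$ and $e'$ to the midpoint of the unique boundary edge of the ambient polygon visible from that vertex between $e$ and $e'$. The content of the statement is that these diagonals are pairwise weakly noncrossing and that they dissect the ambient polygon into even-sided subpolygons. This is the main obstacle. I would handle it by induction on the number of hyperedges of $\tau$, using Proposition~\ref{prop:edge-posets} to locate a leaf covering relation $e \lessdot e'$ in the Hasse diagram of $\poset(\tau)$ and Proposition~\ref{prop:he-merge} to collapse it into a merged hyperedge $e \cup e'$, producing a strictly simpler noncrossing hypertree $\tau'$. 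The inductive hypothesis supplies a legitimate even-sided dissection for $\tau'$, and one then checks geometrically that the single additional diagonal associated to the collapsed covering relation lies inside the subpolygon corresponding to $e \cup e'$ and cuts it into two smaller even-sided pieces, so both weak noncrossingness and evenness are preserved. The base case is the Coxeter hypertree, which corresponds to the empty dissection.

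Finally, I would check that the two assignments are mutually inverse and reverse the partial orders. For ``inverse'': composing dissection $\to$ hypertree $\to$ dissection recovers every original diagonal because each diagonal of the starting dissection appears as a covering relation of $\poset(\tau)$ between two hyperedges sharing a vertex, and the construction in the second direction returns exactly the same diagonal back, as the rigidity of the basic noncrossing hypertree description in Definition~\ref{def:basic} shows; the other composition works by symmetric reasoning. For ``order-reversing'': adding a diagonal to a dissection subdivides exactly one subpolygon into two and, under the bijection, corresponds to splitting one hyperedge of the associated hypertree into two weakly disjoint sub-hyperedges, which is precisely the inverse of a second-type covering relation in $\hforests_{n+1}$. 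Hence inclusion of diagonal sets matches the reverse of the induced order $\leq$ on $\nchtrees_{n+1}$, as required.
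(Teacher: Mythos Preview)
Your proposal is correct and follows the same overall route as the paper: both directions use exactly the two constructions described in the remarks preceding the theorem, and the bijection is established by showing they are mutual inverses.  Where you differ is in the verification details.  For the dissection-to-hypertree direction, the paper simply asserts the construction works, whereas you invoke Proposition~\ref{prop:edge-sizes} together with Euler's relation to certify the hypertree condition; this is a clean addition.  For the hypertree-to-dissection direction, the paper argues directly that two of the constructed diagonals cannot cross (crossing diagonals would separate their black endpoints from the black vertices between their white endpoints, contradicting connectedness of the hypertree), while you instead run an induction on the number of hyperedges via Proposition~\ref{prop:he-merge}, peeling off a leaf of the Hasse diagram of $\poset(\tau)$.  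Both arguments are sound; the paper's connectivity argument is shorter, but your inductive one has the advantage of simultaneously verifying evenness of the subpolygons and making the order-reversing property transparent, since each inductive step is visibly a single covering relation on each side.
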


Under this bijection the process of merging two adjacent hyperedges as
described in Proposition~\ref{prop:he-merge} corresponds to removing a
diagonal from the dissection.  Since any subset of the diagonals can
be removed, the upper intervals in the noncrossing hypertree poset are
Boolean lattices and the dual of the noncrossing hypertree poset is
the face poset of a simplicial complex.

\begin{defn}[Noncrossing hypertree complex]\label{def:ncht-complex}
  The dual of the noncrossing hypertree poset $\nchtrees_{n+1}$ is the
  face lattice of an $n-2$ dimensional simplicial complex
  $\complex(\nchtrees_{n+1})$ that I call the \emph{noncrossing
    hypertree complex}.  It has a vertex for each of the $n^2-1$ basic
  noncrossing hypertrees, or equivalently, a vertex for every diagonal
  of an alternately colored $(2n+2)$-sided polygon connecting a black
  vertex and a white vertex.  Two basic noncrossing hypertrees are
  \emph{compatible} and connected by an edge if and only if the
  corresponding diagonals are weakly noncrossing.  More generally
  there is a simplex, naturally labeled by a noncrossing hypertree,
  for every collection of pairwise compatible basic noncrossing
  hypertrees.  The maximal simplices correspond to noncrossing trees.
  A noncrossing tree on $n+1$ vertices has $n$ edges, so its hyperedge
  poset has $n$ vertices and $n-1$ covering relations and the maximal
  dimensional simplices, or \emph{tree chambers} have $n-1$ vertices
  and dimension $n-2$.  Table~\ref{tbl:ncht} shows the $f$-vector and
  reduced euler characteristic of the noncrossing hypertree complex
  for small values of $n$.  Recall that the number $f_i$ counts the
  number of $i$-dimensional simplices (with the dimension of the empty
  set being $-1$).  For the noncrossing hypertree complex this number
  is equal to the number of noncrossing hypertrees with $i+2$
  hyperedges.  Also note that the reduced euler characteristic of this
  complex is a signed Catalan number.  For further information see the
  sequence \texttt{oeis:A102537} in the Online Encyclopedia of Integer
  Sequences.
\end{defn}

\begin{table}
  \begin{center}
    $\begin{array}{|c|rrrrrrrr|r|}
      \hline
      n & f_{-1} & f_0 & f_1 & f_2 & f_3 & f_4 & f_5 & f_6 & \reuler\\
      \hline \hline
      2 & 1 & 3 &&&&&&  & 2 \\ 
      3 & 1 & 8 & 12 &&&&&  & -5\\
      4 & 1 & 15 & 55 & 55 &&&& & 14\\
      5 & 1 & 24 & 156 & 364 & 273 &&& & -42\\
      6 & 1 & 35 & 350 & 1400 & 2380 & 1428 && & 132\\
      7 & 1 & 48 & 680 & 4080 & 11628 & 15504 & 7752 && -429\\
      8 & 1 & 63 & 1197 & 9975 & 41895 & 92169 & 100947 & 43263 & 1430\\
      \hline
      \end{array}$
  \end{center}
\caption{The $f$-vector and reduced euler characteristic of
  $\complex(\nchtrees_{n+1})$ for small values of
  $n$.\label{tbl:ncht}}
\end{table}

Theorem~\ref{thm:ncht-poly} also means that the noncrossing hypertree
complex can be identified as a generalized cluster complex.

\begin{rem}[Generalized cluster complexes]\label{rem:gen-cluster}
  Generalized cluster complexes are a family of simplicial complexes
  with one complex defined for every finite Coxeter group and for
  every integer $m\geq 1$.  They were introduced for types $A$ and $B$
  by Tzanaki \cite{Tz06} and for all types by Fomin and Reading
  \cite{FoRe05}.  Since their introduction in 2005 they have been
  studied extensively \cite{AthTz06, Kr06, Tz08, AthTz08, FiKaTz13}.
  The specific case of even-sided dissections of an even-sided polygon
  corresponds to the generalized cluster complex of type $A$ with
  $m=2$.
\end{rem}

Although generalized cluster complexes have already been investigated,
different aspects of the noncrossing hypertree complex are visible
when its simplices are labeled by noncrossing hypertrees. In
particular the homeomorphism established in \S\ref{sec:top-geo}
between the noncrossing hypertree complex and the noncrossing
partition link is new and its proof relies heavily on the structure of
noncrossing hypertrees.

\section{Boolean Lattices, Cubes and Spheres\label{sec:bldg-ortho}}

This section reviews how to construct a simplicial sphere from a
Boolean lattice using the geometric shapes called orthoschemes.  The
material is elementary and well-known, but it is included to establish
notation for later constructions.  For a more detailed treatment of
orthoschemes and links see \cite{BrMc10}.

\begin{defn}[Boolean lattices]\label{def:boolean} 
  Let $S$ be a finite set of size $n$.  Its subsets under inclusion
  form the \emph{Boolean lattice of rank $n$} denoted $\bool(S)$ or
  $\bool_n$ when only the size of $S$ is relevant.  It contains, of
  course, many subposets that are isomorphic to smaller Boolean
  lattices and those with the same bounding elements are of particular
  interest.  For every partition of the set $S$ into $k$ blocks, there
  is a Boolean sublattice of type $\bool_k$ inside $\bool_n$ whose
  elements are unions of blocks of the partition.  I call these
  \emph{special Boolean sublattices}.
\end{defn}

\begin{defn}[Chains and weak linear orderings]\label{def:chains}
  If $S' \subset S''$ are two elements in $\bool(S)$, then the
  \emph{label} of this pair is the set of new elements, those in $S''$
  but not in $S'$.  For every chain $\emptyset=S_0 \subset S_1 \subset
  \cdots \subset S_k = S$ in $\bool(S)$ between the bounding elements,
  the labels on the adjacent elements in the chain form a partition of
  $S$. In fact, the collection of all such chains is in bijection with
  the \emph{ordered partitions of $S$}, by which I mean a partition of
  $S$ together with a linear ordering of its blocks.  An alternative
  way to encode an ordered partition is as a function from $S$ to any
  linearly ordered set such as the reals.  The blocks of the partition
  are determined by the elements with the same image and the ordering
  of the blocks by the linear ordering of these images.  Such a
  function is called a \emph{weak linear ordering of the set $S$} and
  two such functions are considered to be the same weak linear
  ordering when they determine the same partition and the same
  ordering of its blocks.  A (strict) \emph{linear ordering of the set
    $S$} is a weak linear ordering where the function is injective, or
  equivalently the partition is trivial. Thus chains in $\bool(S)$
  between the bounding elements bijectively correspond to weak linear
  orderings of $S$ and the maximal chains in $\bool(S)$ correspond to
  the strict linear orderings.
\end{defn}

The geometry of a Boolean lattice is essentially that of a cube.

\begin{defn}[Cubes and orthoschemes]\label{def:cubes}
  Let $S$ be a set with $n$ elements and identify the elements of $S$
  with the standard orthonormal basis of $\R^n$.  The elements of
  $\bool(S)$ can then be identified with the $2^n$ corners of an
  $n$-dimensional cube in $\R^n$ by sending each subset of $S$ to the
  sum of the corresponding basis vectors.  Thus the subset $S'$ is
  sent to the vector where every coordinate is $0$ or $1$ and the
  locations of the $1$'s indicating the elements in the subset. One of
  these points is less than another in the ordering if and only if the
  vector from the first to the second has nonnegative coordinates.
  For each maximal chain, the convex hulls of the corresponding $n+1$
  points in $\R^n$ with its standard euclidean geometry is a shape
  called an \emph{orthoscheme}, or more precisely a \emph{standard
    $n$-orthoscheme} in the language of
  \cite{BrMc10}. Figure~\ref{fig:orthoscheme} shows a $3$-dimensional
  orthoscheme.  Since there are $n!$ ways to linearly order the
  elements of $S$, the $n$-dimensional cube is subdivided into $n!$
  standard $n$-orthoschemes.
\end{defn}

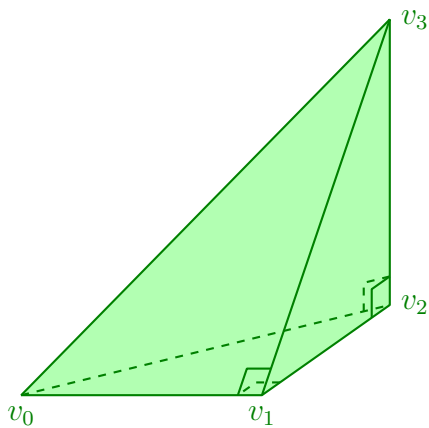
\begin{figure}
  \begin{tikzpicture}[scale=1,thick]
    \def\ux{.32cm} \def\vx{.17cm} \def\vy{.12cm} \def\wy{.38cm}
    \def\sca{10}
    \def\scb{1.414}
    \begin{scope}[GreenPoly]
      \filldraw (0,0) coordinate[label=below:$v_0$](t0) --
      ++(\sca*\ux,0) coordinate[label=below:$v_1$](t1) -- 
      ++(\sca*\vx,\sca*\vy) coordinate[label=right:$v_2$](t2) --
      ++(0,\sca*\wy) coordinate[label=right:$v_3$](t3) -- cycle;
      \draw[dashed] (0,0)--(t2);
      \draw (t1)--(t3);
      \drawAngle{(t2)}{(-1*\scb*\vx,-1*\scb*\vy)}{(0,\wy)};
      \drawAngleD{(t2)}{(-.707*\vx-.707*\ux,-.707*\vy)}{(0,\wy)};
      \drawAngle{(t1)}{(.707*\vx,.707*\vy+.707*\wy)}{(-\ux,0)};
      \drawAngleD{(t1)}{(\scb*\vx,\scb*\vy)}{(-\ux,0)};
    \end{scope}
  \end{tikzpicture}
  \caption{A $3$-dimensional orthoscheme.\label{fig:orthoscheme}}
\end{figure}

The process that converts a Boolean lattice into a metric simplicial
subdivided cube can be extended to other types of posets.

\begin{defn}[Orthoscheme complex]\label{def:ortho-complex}
  The \emph{order complex} of a poset is the simplicial complex formed
  by its chains with subchains as subsimplices: the elements, viewed
  as $1$-element chains are the vertices, the $2$-element chains are
  the edges, etc.  The \emph{orthoscheme complex} of a bounded graded
  poset is a metric version of its order complex.  The simplex
  corresponding to each maximal chain is given the same metric as on
  the maximal chains in the geometric version of the Boolean lattice,
  namely, that of an orthoscheme.  Concretely, the length of the edge
  connecting two vertices is the square root of the difference in the
  levels of the corresponding elements of the poset and, more
  generally, the metric on a simplex labeled by a chain is that of the
  unique euclidean simplex with these specified edge lengths.
\end{defn}

The orthoscheme complex of a Boolean lattice is, of course, the
simplicially subdivided cube described in Definition~\ref{def:cubes}.
When considering the orthoscheme complex of a poset with one or more
bounding elements, the resulting space is always contractible.  Thus,
it is usually more interesting topologically to consider the link of
the vertex or edge spanned by the bounding element(s) of the poset.

\begin{defn}[Link of a poset]\label{def:link}
  The \emph{link of a face of a euclidean polytope} is the spherical
  polytope formed by the set of unit vectors based at a point in the
  interior of the face that (1) are perpendicular to the affine span
  of the face and such that (2) the point plus some small positive
  scalar multiple of the unit vector remains in the polytope.  For
  example, the link of an edge in a euclidean tetrahedron is a
  spherical arc whose length is equal to the dihedral angle (in
  radians) along this edge.  The spherical polytope that results is
  independent of the point chosen in the interior of the face.  When
  the euclidean polytope is a euclidean simplex, the links are
  spherical simplices.  The \emph{link of a simplex in a piecewise
    euclidean simplicial complex} is the piecewise spherical
  simplicial complex formed by gluing together the spherical simplices
  that are the link of this simplex in each of the euclidean simplices
  that contain it as a face in the obvious fashion.  See \cite{BrMc10}
  or \cite{BrHa99} for more precise definitions.  In the concrete case
  where $P$ is a bounded graded poset, the edge connecting its two
  bounding elements in its orthoscheme complex is the unique longest
  edge and in \cite{BrMc10} we called it the \emph{long diagonal}.
  The \emph{link of the poset $P$} is the piecewise spherical
  simplicial complex that is the link of this long diagonal edge in
  its orthoscheme complex.
\end{defn}

From this construction, the following is immediate.

\begin{prop}[Links and subposets]\label{prop:link-subposet}
  If $P$ is a bounded graded poset and $Q$ is an induced subposet with
  the same bounding elements and the same grading, then $\link(Q)$
  is a simplicial subcomplex of $\link(P)$.  
\end{prop}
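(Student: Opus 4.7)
The plan is to unpack the definition of $\link$ of a bounded graded poset in terms of chains and rank data, and then observe that both the combinatorial incidences and the spherical geometry on each simplex transfer correctly from $Q$ to $P$.

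First I would recall that in the orthoscheme complex of any bounded graded poset $R$, the simplices correspond bijectively to chains in $R$, and the long diagonal is the $1$-simplex associated to the $2$-element chain $\{\hat{0}_R, \hat{1}_R\}$. The simplices of $\link(R)$ are therefore in bijection with chains of $R$ that contain both bounding elements (equivalently, with chains strictly between $\hat{0}_R$ and $\hat{1}_R$), with the face relation given by subchain inclusion. Under the hypotheses on $Q$, any chain of $Q$ containing both bounding elements is also such a chain in $P$, since $Q$ is an \emph{induced} subposet of $P$ sharing both bounding elements, and subchains in $Q$ are automatically subchains in $P$. This yields a simplicial injection $\link(Q) \hookrightarrow \link(P)$ that identifies $\link(Q)$ with the full subcomplex of $\link(P)$ spanned by the vertices of $Q \setminus \{\hat{0}, \hat{1}\}$.

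The final step is to verify that this injection is an isometry on each simplex, so that $\link(Q)$ is actually a piecewise spherical subcomplex and not merely a combinatorial one. The shape of the spherical simplex attached to a chain $C$ depends only on the rank differences among the elements of $C \cup \{\hat{0}, \hat{1}\}$, because those rank differences determine the edge lengths of the ambient orthoscheme, and hence of its spherical link at the long diagonal. The same-grading hypothesis says that the rank of each element of $Q$ agrees with its rank in $P$, so the spherical simplex computed inside $\link(Q)$ is congruent to the spherical simplex computed inside $\link(P)$. No significant obstacle arises here; once the orthoscheme complex is rephrased entirely in terms of chains and ranks, the only conceivable failure would be a mismatch of rank functions, which is precisely what the ``same grading'' clause rules out.
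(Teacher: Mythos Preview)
Your proof is correct and is essentially what the paper has in mind: the paper simply asserts that the proposition is ``immediate'' from the construction of the orthoscheme complex and its link, and your argument is exactly the unpacking of that immediacy---chains of $Q$ are chains of $P$, and the spherical metric on each simplex depends only on rank data, which agrees by hypothesis.
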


The link of a Boolean lattice is a simplicial sphere also known as a
Coxeter complex of type~$A$.

\begin{defn}[Coxeter complex of type $A$]\label{def:coxeter-complex}
  From the geometry of its orthoscheme complex it is clear that
  $\link(\bool_n)$ is a metric simplicial sphere $\sph^{n-2}$ with
  $n!$ top-dimensional simplices called \emph{chambers}.  That these
  simplices are all isometric can be seen from the action of the
  symmetric group on $\R^n$ by permuting coordinates which acts
  transitively on the standard $n$-orthoschemes in the cube and
  transitively on the chambers in $\sph^{n-2}$.  Because the symmetric
  group $\sym_n$ is the Coxeter group of type $A_{n-1}$, the sphere
  $\link(\bool_n)$ is called a \emph{Coxeter complex of type
    $A_{n-1}$} and the common shape of its chambers is a spherical
  simplex also known as the \emph{Coxeter shape of type~$A_{n-1}$}.
  Also, note that the simplices in this sphere correspond to chains in
  $\bool_n$ between its bounding elements and thus to weak linear
  orderings of the set $S$ used to construct $\bool_n$.  In fact, the
  coordinates of a point can be reinterpreted as a function from $S$
  to the reals and thus as a weak linear ordering of the set $S$.  The
  interiors of the simplices correspond to the sets of points in the
  sphere whose coordinate functions determine the same weak linear
  ordering of $S$.
\end{defn}

These Coxeter complexes have many simplicial subspheres.

\begin{prop}[Subspheres and special sublattices]\label{rem:subspheres}
  Let $S$ be a set with $n$ elements.  For every partition $\sigma$ of
  $S$ with $k$ blocks, there is a corresponding simplicial subcomplex
  of $\link(\bool_n)$ isometric to $\sph^{k-2}$.  It has the
  simplicial structure of a Coxeter complex of type $A_{k-1}$ but the
  metric might be slightly different.  Moreover, the top-dimensional
  simplices in this subsphere are precisely those correspoonding to
  the $k!$ orderings of the blocks of $\sigma$.
\end{prop}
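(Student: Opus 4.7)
The plan is to identify the special Boolean sublattice $\bool(\sigma) \subseteq \bool(S)$ of Definition~\ref{def:boolean} and to analyze its link both simplicially and metrically as an induced sub-object of $\link(\bool_n)$.  Since $\bool(\sigma)$ shares the bounding elements $\emptyset$ and $S$ with $\bool(S)$ and inherits its grading by set cardinality, Proposition~\ref{prop:link-subposet} immediately produces a simplicial subcomplex $\link(\bool(\sigma)) \subseteq \link(\bool_n)$.  Because $\bool(\sigma)$ is poset-isomorphic to $\bool_k$ and because the simplicial structure of a link depends only on the underlying poset (the rank function only affects the metric on the orthoscheme complex), this subcomplex has the same simplicial structure as $\link(\bool_k)$, namely that of the Coxeter complex of type $A_{k-1}$.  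The bijection between chains and weak linear orderings in Definition~\ref{def:chains}, applied to $\bool(\sigma)$, then identifies its $k!$ top-dimensional simplices with the $k!$ linear orderings of the blocks of $\sigma$.

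For the isometry claim, I would use the coordinate embedding of $\bool(S)$ into $\R^n$ from Definition~\ref{def:cubes}.  The $2^k$ elements of $\bool(\sigma)$ are precisely the corners of a Euclidean rectangular parallelopiped $B_\sigma$ whose mutually orthogonal side vectors are the block indicator vectors $\mathbf{1}_{B_1},\ldots,\mathbf{1}_{B_k}$ of respective lengths $\sqrt{\card{B_1}},\ldots,\sqrt{\card{B_k}}$, and the simplices in the orthoscheme complex of $\bool(\sigma)$ (with its inherited grading) are precisely the $k!$ Euclidean simplices that triangulate $B_\sigma$, one per maximal chain.  Two observations drive the rest of the argument: first, $B_\sigma$ lies in the $k$-dimensional affine subspace $W_\sigma \subseteq \R^n$ spanned by the $\mathbf{1}_{B_i}$; second, because $\sum_i \mathbf{1}_{B_i} = \mathbf{1}_S$, the long diagonal of $B_\sigma$ is literally the same segment as the long diagonal of the ambient $n$-cube.

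The topological step is then the elementary fact that the link of an edge of a Euclidean $k$-dimensional box at an interior point of that edge is a unit $(k-2)$-sphere, here sitting inside $W_\sigma \cap \{\,v\in\R^n : \langle v,\mathbf{1}_S\rangle = 0\,\}$; the $k!$ Euclidean orthoschemes triangulating $B_\sigma$ cut this sphere into exactly $k!$ spherical chambers indexed by block orderings.  This yields both the isometry to $\sph^{k-2}$ and the description of the top-dimensional simplices.  The residual slight discrepancy from the standard Coxeter-shape metric is then automatic: when the block sizes $\card{B_i}$ are not all equal, the orthoschemes triangulating $B_\sigma$ are not congruent to the standard $k$-orthoschemes in a cube, so their spherical links at the long diagonal differ from the standard Coxeter shape of type $A_{k-1}$, even though they still fit together into a genuine unit sphere.

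The main obstacle is really the bookkeeping in the middle step: one has to notice that the grading on $\bool(\sigma)$ inherited from $\bool_n$ records cumulative block sizes rather than position along a chain, and then see that this is exactly the change of edge lengths that deforms the chamber shapes while leaving the overall spherical topology intact.  Once $B_\sigma$ is identified as a genuine $k$-dimensional rectangular box whose long diagonal coincides with the ambient long diagonal, every remaining assertion in the proposition reduces to a standard fact about links of edges in Euclidean parallelopipeds.
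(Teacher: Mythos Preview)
Your argument is correct and follows essentially the same route as the paper's proof: both identify the special Boolean sublattice $\bool(\sigma)$, realize its elements as the vertices of a $k$-dimensional linear slice of the ambient $n$-cube (the paper phrases this as the subspace $\{x_i = x_j \text{ for } i,j \text{ in the same block}\}$, you as the span of the indicator vectors $\mathbf{1}_{B_i}$), and then read off the $(k-2)$-sphere as the intersection of that subspace with the Coxeter complex, with the metric distortion traced to the unequal block sizes. Your framing via the rectangular parallelopiped $B_\sigma$ and its long diagonal is a bit more explicit than the paper's one-line ``intersection of this $k$-dimensional subspace with the Coxeter complex,'' but the content is the same.
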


\begin{proof}
  The partition $\sigma$ determines a special Boolean sublattice of
  type $\bool_k$ and the corresponding vertices of the $n$-cube are
  precisely those living in the subspace defined by setting the
  coordinates $x_i$ and $x_j$ equal if and only if $i$ and $j$ belong
  to the same block $\sigma$.  The intersection of this
  $k$-dimensional subspace with the Coxeter complex determines the
  $\sph^{k-2}$ sphere.  As a simplicial complex it is the link of
  $\bool_k$ but the geometry of the subcomplex is slightly distorted
  when compared to the standard Coxeter complex of this type because
  the embedding into $\R^n$ depends on the number of elements in each
  block.  The final assertion follows from the description of
  simplices in terms of coordinates given in
  Definition~\ref{def:coxeter-complex}.
\end{proof}

The simplicial subspheres of $\link(\bool_n)$ determined by a
partition in this way are called \emph{special subspheres}.  The
following result is nearly immediate from the construction.

\begin{cor}[Special subspheres]
  Every simplex in $\link(\bool_n)$ belongs to a unique special
  subsphere of the same dimension.
\end{cor}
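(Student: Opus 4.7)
The plan is to exhibit the unique special subsphere directly from the ordered-partition description of simplices given in Definition~\ref{def:chains} and Definition~\ref{def:coxeter-complex}, then match dimensions and conclude via a counting argument.

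First I would fix a $d$-dimensional simplex in $\link(\bool_n)$. By Definition~\ref{def:coxeter-complex} its interior is the set of points whose coordinates determine a common weak linear ordering of $S$, equivalently an ordered partition $(B_1,B_2,\ldots,B_m)$ of $S$; the corresponding chain in $\bool_n$ is $\emptyset \subset B_1 \subset B_1\cup B_2 \subset \cdots \subset S$ and has $m-1$ intermediate elements, so $m=d+2$. Let $\sigma$ be the unordered partition $\{B_1,\ldots,B_{d+2}\}$. By Proposition~\ref{rem:subspheres}, $\sigma$ determines a simplicial subsphere of $\link(\bool_n)$ isometric to a distorted Coxeter complex of type $A_{d+1}$, hence of dimension $(d+2)-2=d$. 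Each intermediate element $B_1\cup\cdots\cup B_j$ of our chain is a union of blocks of $\sigma$, so the chain lies in the special Boolean sublattice $\bool_{d+2}$ attached to $\sigma$ and consequently the simplex lies in the special subsphere attached to $\sigma$. This establishes existence of a containing special subsphere of the same dimension.

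For uniqueness, suppose the simplex were also contained in the special subsphere coming from some other partition $\sigma'$ with $k'$ blocks and dimension $k'-2=d$, so $k'=d+2$. Every element of the chain lies in the special Boolean sublattice determined by $\sigma'$, which means each intermediate subset $S_j$ is a union of blocks of $\sigma'$, and therefore each consecutive difference $S_j\setminus S_{j-1}=B_j$ is a union of blocks of $\sigma'$. But $\{B_1,\ldots,B_{d+2}\}=\sigma$ already has $d+2=k'$ blocks, so each $B_j$ must equal a single block of $\sigma'$ and $\sigma=\sigma'$.

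I do not anticipate any real obstacle, since the combinatorial correspondence between chains and ordered partitions in Definition~\ref{def:chains} together with the explicit coordinate realization in Definition~\ref{def:coxeter-complex} reduces the statement to a short matching-of-cardinalities argument; the only care needed is tracking the $-2$ shift between the number of blocks of the partition and the dimension of the simplex or special subsphere it labels.
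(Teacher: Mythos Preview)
Your proposal is correct and follows essentially the same approach as the paper: both use the correspondence between simplices and ordered partitions to produce the containing special subsphere, and both derive uniqueness from the fact (recorded in Proposition~\ref{rem:subspheres}) that the top-dimensional simplices of the special subsphere for a partition $\sigma'$ are exactly the orderings of its blocks. Your uniqueness argument just spells out the counting step a bit more explicitly than the paper does.
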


\begin{proof}
  Let $S$ be the set of size $n$ used to construct $\bool_n$.  A
  simplex of dimension $k-2$ corresponds to an ordered partition
  $\sigma$ of $S$ with $k$ blocks and this partition then determines a
  special subsphere which includes the simplex.  Uniqueness follows
  from the explicit description of the top-dimensional simplices in
  the special subsphere.
\end{proof}

The final remarks in this section concern the various hemispheres in
the Coxeter complex and the ways in which they intersect.

\begin{defn}[Roots and hemispheres]\label{def:roots-hemis}
  Let $S$ be a set of size $n$ and let $\R^n$ have a standard
  orthonormal basis $\{\epsilon_i\}$ indexed by the elements $i$ in
  $S$.  The vector $v_{ij} := $ $\epsilon_i - \epsilon_j$ is called a
  \emph{root} and the set of all roots is the \emph{root system for
    $\sym_n$}.  The span of the root system is the subspace $\R^{n-1}$
  containing the Coxeter complex $\sph^{n-2}$ on which $\sym_n$ acts.
  For each choice of $i$ and $j$ in $S$ the inequality $x_i \geq x_j$
  defines a closed \emph{half-space} in $\R^n$ that becomes a closed
  \emph{hemisphere} $H_{ij}$ in $\sph^{n-2}$ characterized as the set
  of unit vectors in $\sph^{n-2}$ that form a nonobtuse angle with the
  root $v_{ij}$.  In this context, the root $v_{ij}$ is the
  \emph{pole} of $H_{ij}$.  The hemisphere $H_{ij}$ is also the union
  of the chambers that correspond to linear orderings of $S$ where $i$
  occurs before $j$.
\end{defn}

\begin{prop}[Intersecting hemispheres]\label{prop:intersections}
  Let $T$ be a directed tree with vertex set $S$ of size $n$ and
  directed edges.  If for each directed edge from $i$ to $j$ in $T$,
  $H_{ij}$ is the correspondng hemisphere in $\link(\bool(S))$ with
  pole $v_{ij}$, then the intersection of these hemispheres is a union
  of chambers that form a single spherical simplex.
\end{prop}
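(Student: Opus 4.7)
The plan is to recognize the intersection $K := \bigcap_{i\to j} H_{ij}$ as the spherical slice of a convex polyhedral cone and to show that the cone is simplicial by exploiting the tree structure of~$T$.

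First I would argue that $K$ is automatically a union of chambers. Each bounding great hypersphere $\{x_i=x_j\}$ of $H_{ij}$ is a reflection hyperplane of $\sym_n$, and the chambers of $\link(\bool(S))$ are precisely the connected regions cut out by the full reflection arrangement, so every chamber lies entirely inside or entirely outside each $H_{ij}$. Under the chamber-to-linear-ordering correspondence of Definition~\ref{def:coxeter-complex}, the chambers in $K$ are exactly those linear orderings of $S$ in which $i$ precedes $j$ for every directed edge $i\to j$, i.e.\ the linear extensions of the partial order obtained as the transitive closure of $T$. Because the underlying graph of $T$ is a tree, $T$ is acyclic, the partial order is well-defined, and at least one such linear extension exists, so $K$ is non-empty.

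Next I would rephrase the defining inequalities in root-vector form: by Definition~\ref{def:roots-hemis} one has $H_{ij} = \{u \in \sph^{n-2} : \langle u, v_{ij}\rangle \geq 0\}$, so $K = C \cap \sph^{n-2}$ where $C$ is the polyhedral cone dual to the cone spanned by the edge roots $\{v_{ij} : i\to j \in T\}$ inside the $(n-1)$-dimensional root space $V := \{x \in \R^n : \sum_k x_k = 0\}$. The intersection of a full-dimensional simplicial cone with the unit sphere is a full-dimensional spherical simplex with one facet per defining inequality, so the theorem reduces to showing that the $n-1$ edge roots $\{v_{ij}\}$ are linearly independent in~$V$.

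The main step, and the only nontrivial part, is this linear independence. Given a relation $\sum_e c_e v_e = 0$, I would reinterpret the coefficients $c_e$ as a real-valued flow on the directed edges of $T$ and observe that extracting the $\epsilon_k$-coordinate for each vertex $k$ yields a balance equation equating inflow and outflow at $k$. A leaf-peeling induction then finishes the job: any leaf of $T$ has a unique incident edge whose coefficient is forced to be zero by the balance equation at the leaf, and removing the leaf produces a smaller directed tree to which the same argument applies. Once linear independence is established, $n-1$ roots in the $(n-1)$-dimensional space $V$ form a basis, so $C$ is a full-dimensional simplicial cone with exactly $n-1$ facets and $K = C \cap \sph^{n-2}$ is a single spherical $(n-2)$-simplex, which combined with the first paragraph proves the claim. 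The main obstacle is the linear independence of the edge roots, but the flow/leaf-peeling argument dispatches it cleanly.
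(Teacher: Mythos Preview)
Your proof is correct and follows essentially the same route as the paper: show that the edge roots $\{v_{ij}\}$ form a basis of the root subspace $V$ (linear independence from acyclicity, spanning from connectedness), and conclude that the intersection of the hemispheres is a spherical simplex. The paper's proof is far terser---it simply asserts independence ``because $T$ has no cycles'' and spanning ``because $T$ is connected''---whereas you supply the flow/leaf-peeling argument for independence and separately verify the union-of-chambers and non-emptiness claims; these are natural elaborations rather than a different strategy.
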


\begin{proof}
  Because $T$ has no cycles, the set of poles $v_{ij}$ is linearly
  independent and because $T$ is connected, it spans the subspace
  $\R^{n-1}$ containing the sphere $\sph^{n-2}$.  In particular these
  roots form a basis of this root subspace.  From this it quickly
  follows that the intersection of the closed hemispheres is a
  spherical simplex.
\end{proof}

\section{The Noncrossing Partition Link\label{sec:ncp-link}}

This section establishes basic facts about the piecewise spherical
simplicial complex that I call the noncrossing partition link with an
emphasis on its connections to spherical buildings, noncrossing
hypertrees and noncrossing hyperforests.

\begin{defn}[Noncrossing partition link]
  The \emph{noncrossing partition link} $\link(\ncparts_{n+1})$ is the
  link of the long diagonal edge in the orthoscheme complex of the
  noncrossing partition lattice $\ncparts_{n+1}$
  (Definition~\ref{def:ortho-complex}).  As a simplicial complex it is
  the order complex of the noncrossing partition lattice with both
  bounding elements removed.  The top-dimensional simplices in the
  noncrossing partition link are called \emph{partition chambers}.
\end{defn}

The noncrossing partition link can also be viewed as a subcomplex of
the spherical building associated to a linear subspace poset.  A
spherical building is a very special type of highly symmetric
simplicial complex with piecewise spherical metrics on its simplices.
Even though the only spherical buildings used in this article are
those of type $A$ (whose structure can be directly described without
mentioning the general theory), a few general remarks are in order.

\begin{defn}[Spherical buildings]
  A \emph{spherical building}, roughly speaking, is a metric
  simplicial complex that can viewed as a union of unit spheres of the
  same dimension where the simplicial structure on each sphere is that
  of a Coxeter complex for a fixed finite Coxeter group $W$.  These
  top-dimensional spheres are called \emph{apartments} and the
  top-dimensions simplices are called \emph{chambers}.  In a building,
  any two chambers belong to a common apartment and the isometry group
  of the building acts transitively on its chambers and transitively
  on its apartments.  The \emph{type of the building} is the type of
  the finite Coxeter group whose Coxeter complex is the model for the
  simplicial structure on each apartment.
\end{defn}

Since the link of a Boolean poset is a Coxeter complex of type $A$,
spherical buildings of type $A$ can be constructed from posets with
lots of maximal Boolean subposets.

\begin{defn}[Maximal Boolean subposets]\label{def:max-boolean}
  Let $P$ be a bounded graded poset.  A \emph{maximal Boolean subposet
    of $P$} is a subposet $Q$ isomorphic to a Boolean lattice
  $\bool_n$ with the same bounding elements and the same grading as
  $P$.  By Proposition~\ref{prop:link-subposet} each maximal Boolean
  subposet corresponds to a sphere in $\link(P)$.  This also means
  that any special Boolean sublattice inside a maximal Boolean
  subposet inside $P$ gives rise to a smaller dimensional sphere
  inside this sphere in the link of $P$.
\end{defn}

Spherical buildings of type $A$ are built out of linear subspaces.

\begin{defn}[Linear subspaces]\label{def:linear-subspaces}
  Let $V$ be a finite dimensional vector space over a field $\F$.  The
  \emph{linear subspace poset} $\linear(V)$ is the set of all linear
  subspaces of $V$ under inclusion.  It is a bounded graded self-dual
  poset and it has many maximal Boolean subposets.  For example, a
  maximal Boolean subposet of $\linear(V)$ can be constructed from any
  basis $\mathcal{B}$ by taking the span of every subset of
  $\mathcal{B}$.  More precisely, this construction depends less on
  the basis itself than on the lines the basis vectors determine.  The
  link of the linear subspace poset $\linear(V)$ is an example of a
  spherical building of type $A$.  Its apartments are precisely those
  spheres derived from maximal Boolean subposets of $\linear(V)$
  constructed from linearly independent spanning sets of lines.  As an
  example, consider the $3$-dimensional vector space $V$ over the
  field $\F_2$ with only two elements.  The corresponding spherical
  building is the Heawood graph shown in Figure~\ref{fig:bldg} where
  every edge is a metric spherical arc of length $\frac{\pi}{3}$.  The
  various combinatorial hexagons in the graph are thus unit circles
  and these are its apartments.
\end{defn}

\begin{figure}
  \begin{tikzpicture}
    \draw[thick] \poly{14}{2cm};
    \foreach \x in {1,2,...,14} {\coordinate (\x) at (360/14*\x:2cm);}
    \draw[thick] (2)--(7) (4)--(9) (6)--(11) (8)--(13);
    \draw[thick] (10)--(1) (12)--(3) (14)--(5);
    \foreach \x in {1,2,...,14} {\filldraw[black] (\x) circle (2pt);}
    \foreach \x in {2,4,...,14} {\filldraw[white] (\x) circle (1.4pt);}
   \end{tikzpicture}
  \caption{A spherical building of type $A_2$.  The seven black dots
    correspond to the $1$-dimensional subspaces of the $3$-dimensional
    vector space over $\F_2$ and the seven white dots to the
    $2$-dimensional subspaces.  The edges are meant to represent
    spherical arcs of length~$\frac{\pi}{3}$.\label{fig:bldg}}
\end{figure}
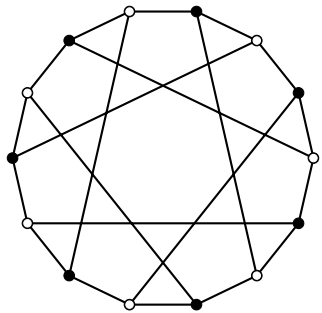

The noncrossing partition lattice embeds into a linear subspace poset.

\begin{rem}[Noncrossing partitions and linear subspaces]
  If the vertices of a convex polygon are identified with the
  coordinates of a vector space, then every noncrossing partition (and
  in fact every partition) naturally describes a linear subspace
  defined by equating the coordinates belonging to each nontrivial
  block.  For example, the partition $\sigma =
  \{\{1,3,4\},\{2\},\{5,6,7,8,9\}\}$ is sent to the $3$-dimensional
  subspace of tuples $(x_1,x_2,\ldots,x_9)$ such that $x_1 = x_3 =
  x_4$ and $x_5 = x_6 = x_7 = x_8 = x_9$.  This map embeds the
  noncrossing partition lattice $\ncparts_{n+1}$ into the \emph{dual}
  of the linear subspace poset $\linear(\F^{n+1})$ because $\sigma <
  \tau$ implies that the subspace associated to $\sigma$ contains the
  subspace associated to $\tau$.  The order reversal is of little
  consequence since both $\ncparts_{n+1}$ and $\linear(\F^{n+1})$ are
  self-dual posets.  Of more concern is the fact that the top element,
  the noncrossing partition that I call the Coxeter hypertree, is sent
  to a $1$-dimensional subspace rather than the trivial subspace.  To
  remedy this intersect each subspace with the codimension~$1$
  subspace where vectors have coordinate sum~$0$.  The new map embeds
  $\ncparts_{n+1}$ into the dual of $\linear(\F^n)$.
\end{rem}

This embedding means that the noncrossing partition link can be viewed
as a simplicial subcomplex of the corresponding spherical building
(Proposition~\ref{prop:link-subposet}).  In fact, Tom Brady and I
noticed around 2003 that the noncrossing partition link can be viewed
as a union of apartments in this spherical builiding.  The first
published proof of this result is in \cite[Proposition~3.25]{HKS}.

\begin{prop}[Noncrossing apartments]\label{prop:nc-apartments}
  The maximal Boolean subposets of the noncrossing partition lattice,
  and thus the only apartments contained in the noncrossing partition
  link, are in natural bijection with the set of noncrossing trees.
\end{prop}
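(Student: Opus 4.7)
The plan is to identify both constructions explicitly: each maximal Boolean subposet forces its $n$ atoms to be edges of a noncrossing spanning tree, and each noncrossing tree gives back a maximal Boolean subposet via the subforest-to-partition map.

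First I would observe that $\ncparts_{n+1}$ has rank $n$, so a maximal Boolean subposet $Q$ must be isomorphic to $\bool_n$ with $n$ atoms and the same grading. Since rank in $\ncparts_{n+1}$ equals $(n+1) - (\text{number of blocks})$, a rank-$1$ element is a partition with exactly one block of size $2$ and all remaining blocks singletons — that is, an unordered pair $\{i,j\}$ on the vertex set, which I will call an \emph{edge}. Thus the atoms of $Q$ determine $n$ edges on $n+1$ vertices. The join in $\ncparts_{n+1}$ of all atoms of $Q$ must equal the top element $\hat{1}$ (the Coxeter hypertree, which is the single-block partition), so these $n$ edges connect all $n+1$ vertices; a connected graph on $n+1$ vertices with exactly $n$ edges is a spanning tree. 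Finally, for every pair of atoms the pairwise join must itself lie in $\ncparts_{n+1}$, forcing every two edges to be noncrossing, so the spanning tree is a noncrossing tree $\tau\in\nctrees_{n+1}$.

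Conversely, given a noncrossing tree $\tau$ with edge set $E$ of size $n$, I would define a map $\pi:\bool(E)\to\ncparts_{n+1}$ sending a subset $S\subseteq E$ to the partition whose blocks are the vertex sets of the connected components of the subforest $(V,S)$. Since every subforest of a noncrossing tree is a noncrossing forest, Proposition~\ref{prop:part-nchf} guarantees that $\pi(S)$ is a noncrossing partition. Injectivity is where I would use the tree structure: if $e=\{i,j\}\in S\setminus S'$, then the unique path from $i$ to $j$ in $\tau$ is the single edge $e$, so $i,j$ share a block of $\pi(S)$ but lie in different blocks of $\pi(S')$. The map $\pi$ clearly sends $\emptyset$ to $\hat{0}$ and $E$ to $\hat{1}$, preserves inclusion (a larger subforest gives a coarser partition), and preserves rank (a subforest with $k$ edges has $n+1-k$ components, hence rank $k$). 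Therefore its image is a subposet of $\ncparts_{n+1}$ isomorphic to $\bool_n$ with the same bounding elements and grading as $\ncparts_{n+1}$, i.e.\ a maximal Boolean subposet.

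The two constructions are manifestly inverse: the atoms of $\pi(\bool(E))$ are exactly the rank-$1$ partitions corresponding to the edges of $\tau$, and starting from a maximal Boolean subposet, every element is the join of the atoms below it — which, under the identification of atoms with edges of a noncrossing spanning tree $\tau$, is precisely the partition of connected components of the corresponding subforest. Naturality follows because both constructions are built from the polygon-vertex data alone and commute with dihedral symmetries. The one step requiring care is the argument that atoms are forced to be edges (rank-$1$ elements have exactly one nontrivial block, and rank $1$ means that block has size $2$) and that the span-and-noncrossing conditions on the atoms are equivalent to the atoms forming a noncrossing spanning tree; the rest is bookkeeping.
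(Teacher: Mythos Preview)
Your argument follows the same two-direction strategy as the paper's proof: extract a noncrossing spanning tree from the atoms of a maximal Boolean subposet, and build a maximal Boolean subposet from the subforests of a noncrossing tree via Proposition~\ref{prop:part-nchf}. One phrasing to tighten: writing ``the pairwise join must itself lie in $\ncparts_{n+1}$'' is vacuous as stated, since joins in a lattice always lie in the lattice. The operative constraint is that $Q$ supplies a rank-$2$ element of $\ncparts_{n+1}$ above each pair of atoms, whereas two crossing edges have no common upper bound of rank~$2$ in $\ncparts_{n+1}$; this is exactly the paper's ``crossing edges have a join that is higher than it should be.'' With that adjustment your proof and the paper's are essentially identical.
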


\begin{proof}
  Since the apartments of the spherical building inside the
  noncrossing partition lattice correspond, by definition, to its
  maximal Boolean subposets, it is sufficient to focus on these.  Let
  $\tau$ be a noncrossing tree.  For every subset of edges of $\tau$,
  the resulting noncrossing forest determines a noncrossing partition
  (Proposition~\ref{prop:part-nchf}) and these elements form a maximal
  Boolean subposet.  Conversely, in any maximal Boolean subposet of
  $\ncparts_{n+1}$ there are exactly $n$ elements that cover the
  bottom element, each of which is represented by a single edge
  connecting two vertices of the convex polygon.  These edges are
  necessarily weakly noncrossing because crossing edges have a join
  that is higher than it should be in a maximal Boolean subposet.  And
  since the join of all $n$ elements is the top element of
  $\ncparts_{n+1}$, these edges must connect all $n+1$ vertices.
  Since $n$ is the minimum number of edges needed to do so, there are
  none left over to form a cycle and their union is a tree.  Finally,
  these two procedures are inverses of each other, thus establishing
  the bijection.
\end{proof}

\begin{defn}[Apartments and spheres]\label{def:apartments-spheres}
  For each noncrossing tree $\tau$ the maximal Boolean subposet
  constructed in Proposition~\ref{prop:nc-apartments} corresponds to a
  top-dimensional sphere in the noncrossing partition link called a
  \emph{noncrossing apartment} and denoted $\apartment(\tau)$.  A
  similar construction can be used with noncrossing hypertrees.  For
  each noncrossing hypertree $\tau$, there is a special Boolean
  sublattice constructed from the noncrossing partitions that are the
  convex hulls of the connected components of the noncrossing
  hyperforests formed by subsets of the hyperedges of $\tau$.  And
  this special Boolean subposet corresponds to a subsphere in the link
  called the \emph{sphere of $\tau$} and denoted $\sphere(\tau)$.
\end{defn}

The fact that the noncrossing partition link is a union of these
noncrossing apartments will be an immediate consequence of
Lemma~\ref{lem:po-spheres}.  The next remark relates the construction
of an apartment from a noncrossing tree to the construction of an
apartment in the spherical building from a basis of the vector space.
It is not needed later, but it is included to clarify how the
constructions correspond.

\begin{rem}[Noncrossing apartments and bases]
  Let $\tau$ be a noncrossing tree on $n+1$ vertices.  Subsets of the
  edges of $\tau$ determine a maximal Boolean subposet of
  $\ncparts_{n+1}$ which is sent to a maximal Boolean subposet of
  $\linear(\F^n)$.  Because of the order reversal of the embedding,
  the lines in the vector space $V = \F^n$ that determine the
  corresponding apartment come from the elements of the Boolean
  subposet in $\ncparts_{n+1}$ covered by the top element, the
  noncrossing partitions with exactly $2$ blocks determined by
  removing a single edge from $\tau$.  Let $\sigma_e$ be the
  noncrossing partition formed when the edge $e$ is removed and let
  $A$ and $B$ be the blocks of sizes $k$ and $\ell$ it contains.  From
  $\sigma_e$ we construct a vector $v_e$ with a value of $\ell$ in
  each of the coordinates associated with the $k$ elements in block
  $A$ and a value of $-k$ in each of the coordinates associated with
  the $\ell$ elements in block $B$.  This vector $v_e$ in $\F^{n+1}$
  has coordinate sum $0$ and the line it spans is the image of
  $\sigma_e$ under the embedding.  Note that if the roles of blocks
  $A$ and $B$ are reversed, the vector produced is $-v_e$ and the line
  remains the same.  The lines associated to these $2$ block
  partitions are linearly independent and span the subspace of vectors
  with coordinate sum~$0$.  In fact, since $v_e$ is the only vector
  that assigns different values to the coordinates associated with its
  endpoints, its contribution to a vector with coordinate sum~$0$, is
  determined by the difference between these coordinate values.
  Subtracting off these necessary contributions leaves a vector where
  all coordinates are equal and the coordinate sum is unchanged.
\end{rem}

\section{Permutations and Reduced Products\label{sec:perm-product}}

Ever since the work of Brady and Watt \cite{BradyWatt02} and Bessis
\cite{Be03} more than a decade ago, the modern way to view the
noncrossing partition lattice is as a portion of the symmetric group
between the identity and a Coxeter element with respect to reflection
length, and the proof of the homeomorphism between the noncrossing
partition link and the noncrossing hypertree complex uses these ideas.
The first step is to turn hyperedges into irreducible permutations and
chains in the noncrossing partition lattice into reduced products.

\begin{defn}[Hyperedges and irreducible permutations]\label{def:he-ip}
  An \emph{irreducible permutation} is one represented by a single
  cycle of length $2$ or more.  The name refers to the fact that these
  permutations are Coxeter elements for irreducible subroot systems of
  the root system of the symmetric group.  There is a map from
  irreducible permutations to hyperedges that sends the permutation to
  the set of elements moved by its single cycle, but it is obviously
  not one-to-one since $(k-1)!$ irreducible permutations are sent to
  each $k$-element hyperedge.  In the other direction, there is an
  injective map that sends each hyperedge to the unique irreducible
  permutation where the elements in the single cycle are listed in the
  order they appear in the boundary cycle of their convex hull when
  traversed clockwise.  When the standard vertex labeling is used this
  corresponds to their natural linear order as integers.  Under these
  maps the irreducible permutation $(1,3,5,2)$ is sent to the
  hyperedge $\{1,2,3,5\}$ and this hyperedge is sent to the
  permutaiton $(1,2,3,5)$ (when the standard vertex labeling is used).
  The image of the unique hyperedge in the Coxeter hypertree on $n+1$
  vertices with the standard vertex labeling is the permutation
  $c=(1,2,\ldots,n+1)$, a notation that reflects the role of $c$ as a
  \emph{Coxeter element} in symmetric group $\sym_{n+1}$.
\end{defn}

\begin{defn}[Noncrossing permutations]
  If $\sigma$ is a noncrossing partition, the irreducible permutations
  of its hyperedges pairwise commute and their well-defined product is
  called a \emph{noncrossing permutation}. Each noncrossing partition
  is identified with its noncrossing permutaiton. The partition
  $\sigma = \{\{1,3,4\}, \{2\}, \{5,6,7,8,9\}\}$, for example, becomes
  the permutation $(1,3,4)(5,6,7,8,9)$ when the standard vertex order
  is used.  With this identification, the noncrossing partition of a
  noncrossing hyperforest (Proposition~\ref{prop:part-nchf}) can now
  be viewed as the \emph{noncrossing permutation of a noncrossing
    hyperforest}.
\end{defn}

\begin{defn}[Reduced products]
  Let $W$ be a Coxeter group and let $R$ denote its set of
  reflections.  The \emph{reflection length} of a element $\sigma \in
  W$ is the length $\ell_R(\sigma)$ of the smallest list of
  reflections whose product is $\sigma$.  Some parts of the literature
  call this the \emph{absolute length} of $\sigma$.  In the symmetric
  group $\sym_{n+1}$, the Coxeter group of type $A_n$, the reflections
  are the transpositions, and in the spirit of
  Proposition~\ref{prop:degrees} if you expect every cycle to have
  length~$1$ then the reflection length of a permutation is the sum of
  its excess cycle lengths.  A product $\sigma_1 \sigma_2 \cdots
  \sigma_k = \sigma$ is called a \emph{reduced product} and a
  \emph{reduced factorization of $\sigma$} if the reflection length
  $\ell_R(\sigma)$ of the product is equal to the sum $\sum_{i=1}^k
  \ell_R(\sigma_i)$ of the reflection lengths of the factors.  This
  leads to a partial \emph{reflection order} on the elements of $W$:
  $\sigma' < \sigma$ if and only if $\sigma'$ is a factor in a reduced
  factorization of $\sigma$.
\end{defn}

Reduced products are the same as what Vivian Ripoll calls ``block
factorizations'' \cite{Ri11,Ri12}.  That term is not used here because
the word ``block'' might mistakenly lead one to assume that each
factor corresponds to a single block of a partition, but this is not
part of the definition and is only true when the factors are
irreducible permutations.  Reduced products have many easy-to-prove
properties.

\begin{prop}[Reduced products]\label{prop:rp}
  If $\sigma_1 \sigma_2 \cdots \sigma_k = \sigma$ is a reduced
  product, then so is the portion of the factorization $\sigma_i
  \sigma_{i+1} \cdots \sigma_j = \sigma'$ for all $1 \leq i \leq j
  \leq k$ and so is the factorization of $\sigma$ where the portion
  from $\sigma_i$ to $\sigma_j$ is replaced by its product $\sigma'$.
\end{prop}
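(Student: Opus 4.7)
The plan is to deduce both assertions from a single fundamental property of reflection length, namely its subadditivity under products: for any $\alpha,\beta \in W$ one has $\ell_R(\alpha\beta) \leq \ell_R(\alpha) + \ell_R(\beta)$. This subadditivity is immediate from the definition of $\ell_R$, since concatenating a shortest reflection factorization of $\alpha$ with one of $\beta$ yields a (possibly non-reduced) reflection factorization of $\alpha\beta$. By induction it follows that for any factorization $\tau_1 \tau_2 \cdots \tau_m = \tau$ one has $\ell_R(\tau) \leq \sum_{i=1}^m \ell_R(\tau_i)$, with equality precisely when the factorization is reduced.

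With this inequality in hand, suppose $\sigma_1 \sigma_2 \cdots \sigma_k = \sigma$ is reduced, fix $1 \leq i \leq j \leq k$, and set $\alpha = \sigma_1 \cdots \sigma_{i-1}$, $\sigma' = \sigma_i \cdots \sigma_j$, and $\beta = \sigma_{j+1} \cdots \sigma_k$, so $\sigma = \alpha \sigma' \beta$. Applying subadditivity three times gives
\[
\ell_R(\sigma) \;\leq\; \ell_R(\alpha) + \ell_R(\sigma') + \ell_R(\beta) \;\leq\; \sum_{m<i}\ell_R(\sigma_m) + \ell_R(\sigma') + \sum_{m>j}\ell_R(\sigma_m).
\]
On the other hand, since the original factorization is reduced, $\ell_R(\sigma) = \sum_{m=1}^k \ell_R(\sigma_m)$. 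Combining these two facts forces
\[
\ell_R(\sigma') \;\geq\; \sum_{m=i}^{j} \ell_R(\sigma_m),
\]
and the reverse inequality is again subadditivity. Hence $\ell_R(\sigma') = \sum_{m=i}^{j} \ell_R(\sigma_m)$, which is exactly the statement that $\sigma_i \sigma_{i+1} \cdots \sigma_j = \sigma'$ is a reduced product.

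For the second assertion, the collapsed factorization $\sigma_1 \cdots \sigma_{i-1} \cdot \sigma' \cdot \sigma_{j+1} \cdots \sigma_k$ has factors whose reflection lengths sum to
\[
\sum_{m<i}\ell_R(\sigma_m) + \ell_R(\sigma') + \sum_{m>j}\ell_R(\sigma_m) \;=\; \sum_{m=1}^{k} \ell_R(\sigma_m) \;=\; \ell_R(\sigma),
\]
where the first equality uses the conclusion of the previous paragraph. Since equality holds, this factorization of $\sigma$ is reduced as well. There is no real obstacle here; the only subtlety is recognizing that the two claims are linked, so that once the contiguous sub-factorization is shown to be reduced, the collapsed factorization comes essentially for free.
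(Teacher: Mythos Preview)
Your argument is correct. The paper itself does not supply a proof of this proposition; it is stated immediately after the remark that ``reduced products have many easy-to-prove properties'' and is left to the reader. Your proof via subadditivity of reflection length is the standard argument one would expect here, and it is carried out cleanly: the chain of inequalities squeezes $\ell_R(\sigma')$ to the required value, and the second assertion then follows by substituting that value back into the sum. There is nothing to correct.
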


In the definition of the partial order on elements of $W$, the element
$\sigma$ can be required to be the first or last factor in a reduced
factorization of $\sigma$ without changing the partial order because
of the following easy fact that follows from the closure of the set of
reflections under conjugation.

\begin{prop}[Rewriting reduced products]\label{prop:rewriting}
  Let $\sigma_1 \sigma_2 \cdots \sigma_k$ be a reduced
  factorization of $\sigma$ in a Coxeter group $W$. For any selection
  $1 \leq i_1 < i_2 < \cdots < i_j \leq k$ of positions there is a
  length~$k$ reduced factorization of $\sigma$ whose first $j$ factors
  are $\sigma_{i_1} \sigma_{i_2} \cdots \sigma_{i_j}$ and another
  length~$k$ reduced reduced factorization of $\sigma$ where these are
  the last $j$ factors in the factorization.
\end{prop}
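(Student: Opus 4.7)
The plan is to reduce everything to a single \emph{adjacent swap lemma}: if $\alpha\beta$ appears as two consecutive factors of a reduced product, then so does $\beta\,(\beta^{-1}\alpha\beta)$, and likewise $(\alpha\beta\alpha^{-1})\,\alpha$, and each rewriting yields a reduced factorization of the same element. The proof of this lemma is immediate from the definition of reduced product together with the hint in the statement. Namely, reflection length is invariant under conjugation: if $\sigma = r_1 r_2 \cdots r_m$ is a shortest reflection factorization of $\sigma$, then $w\sigma w^{-1} = (w r_1 w^{-1})(w r_2 w^{-1})\cdots (w r_m w^{-1})$ is a product of the same number of reflections (since the reflections are closed under conjugation), so $\ell_R(w\sigma w^{-1}) \leq \ell_R(\sigma)$ and by symmetry equality holds. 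Hence $\ell_R(\beta^{-1}\alpha\beta) = \ell_R(\alpha)$ and $\ell_R(\alpha\beta\alpha^{-1}) = \ell_R(\beta)$, so the sum of reflection lengths of the new factor list equals the sum for the old list, and by Proposition~\ref{prop:rp} applied to the surrounding factors the whole list remains reduced.

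Given the adjacent swap lemma, I would pull the selected factors to the front one at a time by a ``bubble'' argument. First bring $\sigma_{i_1}$ to position $1$: whenever $\sigma_{i_1}$ has an immediate left neighbor $\alpha$, apply the rewriting $\alpha\,\sigma_{i_1} \rightsquigarrow \sigma_{i_1}\,(\sigma_{i_1}^{-1}\alpha\,\sigma_{i_1})$. Note that the distinguished factor $\sigma_{i_1}$ itself is \emph{not} altered; only the factor it passes through is conjugated. After $i_1 - 1$ such swaps, $\sigma_{i_1}$ sits in position $1$, the factors in positions $2,\ldots,i_1$ are conjugates of the former factors in positions $1,\ldots,i_1 - 1$, and the factors in positions $i_1+1, \ldots, k$ (in particular $\sigma_{i_2}, \ldots, \sigma_{i_j}$) are untouched. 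Now repeat: bubble the still-intact factor $\sigma_{i_2}$ leftward until it reaches position $2$, then $\sigma_{i_3}$ to position $3$, and so on. At each stage the factors being moved are exactly the still-unchanged $\sigma_{i_m}$, so after $j$ stages we obtain a length-$k$ reduced factorization of $\sigma$ whose first $j$ factors are $\sigma_{i_1}, \sigma_{i_2}, \ldots, \sigma_{i_j}$ in the prescribed order.

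For the second half of the proposition I would run the mirror-image argument, using the other adjacent swap $\sigma_{i_m}\,\alpha \rightsquigarrow (\sigma_{i_m}\alpha\,\sigma_{i_m}^{-1})\,\sigma_{i_m}$ to push the selected factors rightward. Processing them in reverse order $\sigma_{i_j}, \sigma_{i_{j-1}}, \ldots, \sigma_{i_1}$ keeps each chosen factor intact until its turn, and deposits them (in the original order) as the last $j$ factors of a length-$k$ reduced factorization of $\sigma$.

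There is no real technical obstacle here; the only point that deserves care is the invariant that the distinguished factors themselves are never rewritten, which is why the direction of conjugation in the adjacent swap matters. The entire argument rests on the single observation that the reflection set is closed under conjugation, combined with repeated application of Proposition~\ref{prop:rp}.
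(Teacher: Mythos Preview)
Your argument is correct and matches the paper's approach exactly: the paper does not give a formal proof but simply remarks that this is ``an easy fact that follows from the closure of the set of reflections under conjugation,'' which is precisely the single observation you isolate and then flesh out with the bubble-sort mechanics. Your care about which side gets conjugated (so that the distinguished factors survive intact) is the only genuine detail to track, and you handle it correctly.
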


The set of permutations below the Coxeter element $c$ in the
reflection order is exactly the set of noncrossing permutations and
their poset structure in this partial order is isomorphic to that of
the noncrossing partition lattice $\ncparts_{n+1}$
\cite{BradyWatt02,Be03}.  This immediately implies the following
result.

\begin{prop}[Factors in reduced products]\label{prop:reduced-factors}
  Let $\sigma_1 \sigma_2 \cdots \sigma_k$ be a reduced factorization
  of a permutation $\sigma$.  When $\sigma$ is a noncrossing
  permutation, every $\sigma_i$ is a noncrossing permutation and,
  equivalently, when one $\sigma_i$ is not noncrossing, $\sigma$ is
  not noncrossing.
\end{prop}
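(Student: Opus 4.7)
The plan is to reduce the statement to the characterization of noncrossing permutations as exactly those elements below the Coxeter element $c$ in the reflection order, the result of Brady--Watt and Bessis that is recalled immediately before the proposition. Transitivity of the reflection order then does all the work, once I know that every factor $\sigma_i$ of a reduced factorization lies below $\sigma$.

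First I would verify the intermediate claim that $\sigma_i \leq \sigma$ in the reflection order for each $i$. Proposition~\ref{prop:rewriting} applied with the single position $i_1 = i$ produces a length-$k$ reduced factorization of $\sigma$ whose first factor is $\sigma_i$. By Proposition~\ref{prop:rp}, truncating this factorization after its first factor still yields a reduced product, so $\sigma_i$ is the first factor in a reduced factorization of $\sigma$. By the definition of the reflection order this gives $\sigma_i \leq \sigma$.

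Now I would combine this with the hypothesis. Since $\sigma$ is noncrossing, $\sigma \leq c$ in the reflection order, and transitivity gives $\sigma_i \leq c$ for every $i$. The Brady--Watt/Bessis identification of the interval $[\mathrm{id}, c]$ with $\ncparts_{n+1}$ then says exactly that each $\sigma_i$ is a noncrossing permutation, proving the first assertion. The contrapositive is the ``equivalently'' clause: if some $\sigma_i$ fails to be noncrossing, i.e.\ $\sigma_i \not\leq c$, then $\sigma \not\leq c$, so $\sigma$ is not noncrossing either.

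There is no real obstacle here beyond making sure that Propositions~\ref{prop:rp} and \ref{prop:rewriting} are enough to promote ``appears as a factor in some reduced product of $\sigma$'' to ``is a first factor in some reduced factorization of $\sigma$,'' which is exactly what Proposition~\ref{prop:rewriting} was set up to provide. The whole argument is a two-line application of transitivity of $\leq$ in the reflection order, and the labelling of this proposition as an immediate consequence in the text reflects precisely this.
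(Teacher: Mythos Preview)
Your argument is correct and matches the paper's approach exactly: the proposition is stated there as an immediate consequence of the Brady--Watt/Bessis identification of noncrossing permutations with the interval $[\mathrm{id},c]$, and your transitivity argument is precisely what ``immediately implies'' means here. One minor simplification: by the definition of the reflection order given just above the proposition, any factor $\sigma_i$ in a reduced factorization of $\sigma$ already satisfies $\sigma_i \leq \sigma$, so invoking Proposition~\ref{prop:rewriting} to move $\sigma_i$ to the front is not strictly necessary.
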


Another way to state this connection between partitions and
permutations is that the Hasse diagram of the noncrossing partition
lattice is a portion of the Cayley graph of the symmetric group.  More
precisely, the Hasse diagram of the poset $\ncparts_{n+1}$ is the same
as the union of the directed geodesic paths from the identity to the
Coxeter element $c = (1,2,\ldots,n+1)$ in the right Cayley graph of
the symmetric group $\sym_{n+1}$ generated by the set $R$ of all
reflections.  In particular, if $c = r_1 r_2 \cdots r_n$ is a reduced
factorization of the Coxeter element $c$ into reflections, then there
is a path of length $n$ in the Hasse diagram of $\ncparts_{n+1}$ from
the identity element at the bottom to the Coxeter element at the top
whose edges are labeled $r_1, r_2, \ldots, r_n$ in that order.  More
generally, for every pair of noncrossing permutations $\tau < \tau'$
there is a well-defined \emph{label} that is the unique noncrossing
permutation $\sigma$ solving the equation $\tau \sigma = \tau'$, and
the sequence of labels on a chain in the noncrossing partition lattice
describes a reduced factorization of the label assigned to its pair of
endpoints.

\begin{prop}[Reduced products and chains]\label{prop:rp-chains}
  For every pair of elements $\tau < \tau'$ in the noncrossing
  partition lattice $\ncparts_{n+1}$, the chains of length $k$ from
  $\tau$ to $\tau'$ are in one-to-one correspondence with the reduced
  factorizations of the unique element $\sigma$ such that $\tau\sigma
  = \tau'$ into $k$ factors.  Concretely the chain $\tau = \tau_0 <
  \tau_1 < \cdots < \tau_k = \tau'$ is paired with the reduced
  factorization $\sigma_1 \sigma_2 \cdots \sigma_k = \sigma$ if and
  only if the $\sigma_i$'s and $\tau_i$'s satisfy the equations:
  $\tau_0 \sigma_1 \cdots \sigma_i = \tau_i$ for $i = 1, \ldots, k$.
\end{prop}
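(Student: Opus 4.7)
The plan is to use the Brady--Watt/Bessis identification of the noncrossing partition lattice $\ncparts_{n+1}$ with the interval $[e,c]$ in the reflection order on $\sym_{n+1}$ and then exploit the already-established machinery of reduced products. Under this identification, the relation $\tau < \tau'$ in $\ncparts_{n+1}$ is equivalent to the existence of a (unique) noncrossing permutation $\sigma$ such that $\tau\sigma = \tau'$ is a reduced product of two elements, and this $\sigma$ is precisely the label of the pair discussed just before the proposition. The proof then consists of constructing mutually inverse maps $\Phi$ and $\Psi$ between chains and reduced factorizations, both governed by the single defining identity $\tau_{i-1}\sigma_i = \tau_i$.

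For $\Phi$, I would start with a chain $\tau = \tau_0 < \tau_1 < \cdots < \tau_k = \tau'$ and let $\sigma_i$ be the label of the pair $\tau_{i-1} < \tau_i$. By definition each two-term product $\tau_{i-1}\sigma_i = \tau_i$ is reduced, so $\ell_R(\tau_i) = \ell_R(\tau_{i-1}) + \ell_R(\sigma_i)$. Telescoping the product gives $\sigma_1\sigma_2\cdots\sigma_k = \tau_0^{-1}\tau_k = \sigma$, while summing the reflection-length identities gives $\sum_i \ell_R(\sigma_i) = \ell_R(\tau_k) - \ell_R(\tau_0) = \ell_R(\sigma)$, so the factorization is reduced. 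That each $\sigma_i$ is a noncrossing permutation then follows from Proposition~\ref{prop:reduced-factors} applied to this reduced factorization of $\sigma$ (which is noncrossing since $\sigma < c$).

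For the inverse $\Psi$, start with a reduced factorization $\sigma_1\sigma_2\cdots\sigma_k = \sigma$ of noncrossing permutations and set $\tau_i := \tau\,\sigma_1\cdots\sigma_i$. Since $\tau\sigma = \tau'$ is itself a reduced product, Proposition~\ref{prop:rp} allows me to substitute the factorization of $\sigma$ to conclude that $\tau\cdot\sigma_1\cdots\sigma_k = \tau'$ is a single reduced factorization. The same proposition says every contiguous subproduct of this factorization is reduced, so each $\tau_i = \tau\sigma_1\cdots\sigma_i$ is a factor in a reduced factorization of the noncrossing permutation $\tau'$ and is therefore itself noncrossing by Proposition~\ref{prop:reduced-factors}. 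The two-term reduced product $\tau_{i-1}\sigma_i = \tau_i$ shows that $\tau_{i-1} < \tau_i$ in the reflection order, so the $\tau_i$ form an honest chain from $\tau$ to $\tau'$ in $\ncparts_{n+1}$.

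Finally, $\Phi$ and $\Psi$ are inverse to one another essentially by inspection: both are characterized by the equations $\tau_0\sigma_1\cdots\sigma_i = \tau_i$, so starting from a chain, applying $\Phi$, and then $\Psi$ recovers the original $\tau_i$'s, and vice versa. I do not expect any serious obstacle here; the only place that requires a moment of care is the verification that the prefix products $\tau\sigma_1\cdots\sigma_i$ land in the noncrossing partition lattice, and this is handled cleanly by combining Propositions~\ref{prop:rp} and~\ref{prop:reduced-factors} as above.
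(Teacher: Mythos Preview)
Your proposal is correct and follows the same approach as the paper, which simply observes that the defining equations $\tau_0\sigma_1\cdots\sigma_i=\tau_i$ let one solve for the $\tau_i$'s from the $\sigma_i$'s and conversely. You have filled in considerably more detail than the paper does---in particular the verification that the factorization obtained from a chain is actually reduced and that the prefix products obtained from a reduced factorization land in $\ncparts_{n+1}$---but the underlying idea is identical.
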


\begin{proof}
  Given the endpoints and the $\sigma_i$'s one can solve for the
  $\tau_i$'s.  Conversely, given the $\tau_i$'s one can solve for the
  $\sigma_i$'s.
\end{proof}

\section{Reduced Products and Noncrossing Hyperforests\label{sec:rp-nchf}}

When the permutations of the hyperedges of a noncrossing hyperforest
$\tau$ are multiplied together, the result depends on the order of
multiplication unless, of course, $\tau$ is a noncrossing partition
whose hyperedge permutations pairwise commute.  This section explores
the impact that this order has on the result with a goal of
establishing a bijection between noncrossing hyperforests whose
hyperedges have been ``properly ordered'' in a sense made precise in
Definition~\ref{def:ordered} and reduced products of noncrossing
permutations into irreducible permutations (Theorem~\ref{thm:rp-hf}).
As a first step consider the product of two irreducible permutations
that are weakly disjoint but not disjoint.

\begin{lem}[Two irreducible permutations]\label{lem:two-ips}
  If $\sigma'$ and $\sigma''$ are irreducible permutations whose
  single cycles have exactly one element in common, then $\sigma'
  \sigma''$ is a reduced product and the product $\sigma$ is an
  irreducible permutation.  Moreover, the product $\sigma$ is a
  noncrossing permutation if and only if the factors $\sigma'$ and
  $\sigma''$ are noncrossing permutations that form a noncrossing
  hyperforest with $2$ hyperedges where $\sigma'$ is to the left of
  $\sigma''$ when viewed from the perspective of their common vertex.
\end{lem}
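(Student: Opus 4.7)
The plan is to split the lemma into its two parts: the cycle computation (giving irreducibility and reducedness) and the noncrossing characterization.

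For the first part I would write the two irreducible permutations with the common element as the starting letter of each cycle, $\sigma'=(v,x_1,\ldots,x_{k-1})$ on support $A$ and $\sigma''=(v,y_1,\ldots,y_{m-1})$ on support $B$, where $k=\card{A}$ and $m=\card{B}$. A direct computation tracking the images of each element under the product (using the composition convention fixed in \S\ref{sec:perm-product}) shows that $\sigma=\sigma'\sigma''$ is the single cycle on $A\cup B$ obtained by splicing the two cycles together at the shared letter $v$: at $v$ one cycle ``hands off'' to the other, and at the last letter of the second cycle one returns to the first cycle. Since $\card{A\cup B}=k+m-1$, the product $\sigma$ is itself an irreducible permutation. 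In type $A$ the reflection length of a single cycle of length $\ell$ is $\ell-1$, so $\ell_R(\sigma)=k+m-2=(k-1)+(m-1)=\ell_R(\sigma')+\ell_R(\sigma'')$, which confirms that $\sigma=\sigma'\sigma''$ is a reduced factorization.

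For the ``only if'' direction of the second assertion, I would invoke Proposition~\ref{prop:reduced-factors}: if $\sigma$ is noncrossing then both reduced factors $\sigma'$ and $\sigma''$ are automatically noncrossing, so only the geometric ``left/right'' condition remains to be verified. For the ``if'' direction, I would use the defining property of the injection in Definition~\ref{def:he-ip}, namely that an irreducible noncrossing permutation with support $C$ is the unique cycle that lists $C$ in the clockwise polygon order. Applied to $\sigma$, this says the spliced cycle produced in the first step is noncrossing if and only if the concatenated sequence (in whichever splice order the chosen convention produces) lists $A\cup B$ clockwise starting from $v$. Since each factor already lists its own support clockwise, the condition collapses to the purely positional statement that going clockwise from $v$ one first meets all the vertices of one hyperedge (the ``first-factor'' one) and only then any vertex of the other. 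That positional statement is in turn equivalent to (i) the convex hulls of $A$ and $B$ meeting only at $v$, so that $\{A,B\}$ really is a noncrossing hyperforest, together with (ii) the first-factor hyperedge being the one on the clockwise side of $v$, which by Definition~\ref{def:local-order} is exactly the ``$\sigma'$ to the left of $\sigma''$ at $v$'' condition.

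The main obstacle will be bookkeeping. Once the composition convention of \S\ref{sec:perm-product} and the orientation convention relating ``clockwise from $v$'' to ``on the left at $v$'' are both pinned down, everything else is essentially forced. The delicate point is ensuring these two convention choices align so that the left-versus-right portion of the conclusion comes out on the correct side; the algebraic cycle-splicing and the reflection-length identity are mechanical.
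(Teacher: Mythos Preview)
Your proposal is correct and follows the paper's approach exactly: the paper's proof consists of nothing more than the explicit formula $(a,b_1,\ldots,b_k)(a,c_1,\ldots,c_\ell)=(a,b_1,\ldots,b_k,c_1,\ldots,c_\ell)$ together with the remark that all assertions follow immediately from it. Your write-up simply unpacks the details the paper leaves implicit; the detour through Proposition~\ref{prop:reduced-factors} in the ``only if'' direction is unnecessary but harmless, since the noncrossing-ness of the factors and the weak-noncrossing and left/right conditions can all be read off directly from the same explicit cycle.
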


\begin{proof}
  Let $k+1$ and $\ell+1$ be the lengths of the nontrivial cycles of
  $\sigma'$ and $\sigma''$ respectively and let $a$ be the element
  they have in common.  If we write $(a,b_1,\ldots,b_k)$ for $\sigma'$
  and $(a,c_1,\ldots,c_\ell)$ for $\sigma''$, then
  $(a,b_1,\ldots,b_k)\cdot (a,c_1,\ldots,c_\ell) =
  (a,b_1,\ldots,b_k,c_1,\ldots, c_\ell) = \sigma$.  All of the
  assertions follow immediately from this explicit computation.
\end{proof}

When analyzing more complicated products it is convenient to attach a
linear ordering to the hyperforest under consideration.

\begin{defn}[Ordered hyperforests]\label{def:ordered}
  An \emph{ordered hyperforest} is a hyperforest $\tau$ together with
  a linear ordering of it hyperedges.  The product of the hyperedge
  permutations of $\tau$ in this prescribed order is called the
  \emph{permutation of $\tau$}.  This should not be confused with the
  noncrossing permutation of noncrossing hyperforest which is the
  permutation of the noncrossing partition formed from the blocks that
  are its connected components.  The relationship between these two
  permutations is explained in Lemma~\ref{lem:po-nc}.  When the
  hyperforest $\tau$ is a noncrossing hyperforest, it has a hyperedge
  poset (Definition~\ref{def:edge-posets}) and the ordering on $\tau$
  is \emph{proper} and $\tau$ is \emph{properly ordered} when the
  linear ordering of the hyperedges of $\tau$ is a linear extension of
  the partial ordering of its hyperedges recorded in its hyperedge
  poset.
\end{defn}

The next result extends Lemma~\ref{lem:two-ips} to all ordered
hyperforests.

\begin{lem}[Ordered hyperforest permutations]\label{lem:ohfp}
  The permutation of an ordered hypertree is an irreducible
  permutation of all of its vertices and the permutation of an ordered
  hyperforest has a cycle type determined by the sizes of its
  connected components.  In both cases these products are reduced
  products of the hyperedge permutations.
\end{lem}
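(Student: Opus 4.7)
The plan is to induct on the number $k$ of hyperedges of $\tau$. The base case $k=0$ is the identity permutation, whose fixed points match the singleton components of the trivial hyperforest, and the case $k=1$ is immediate from Definition~\ref{def:he-ip}. For the inductive step I would peel off the last hyperedge $e_k$ in the ordering, let $\tau'$ denote the ordered hyperforest on the remaining hyperedges, and let $\sigma' = \sigma_1 \cdots \sigma_{k-1}$ be its permutation. By induction $\sigma'$ is a reduced product and its cycle structure, with singleton cycles at isolated vertices, matches the connected components of $\tau'$.

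The inductive step rests on two facts. First, because $\tau$ is a hyperforest, $e_k$ meets each component of $\tau'$ in at most one vertex; otherwise two vertices of $e_k$ would be joined by a path in $\tau'$ and, together with $e_k$, would form a nontrivial simple cycle. Hence the vertices of $e_k$ lie in pairwise distinct cycles of $\sigma'$. Second, a standard symmetric-group fact says that if $\pi$ is any permutation and $\gamma$ is an $m$-cycle whose support picks exactly one element from each of $m$ distinct cycles of $\pi$, then $\pi\gamma$ has the same cycles as $\pi$ outside the chosen ones, while the $m$ chosen cycles are amalgamated into a single cycle. Applying this with $\pi = \sigma'$ and $\gamma = \sigma_k$, the cycles of $\sigma = \sigma'\sigma_k$ match the components of $\tau$, since adjoining $e_k$ to $\tau'$ amalgamates exactly those components of $\tau'$ that meet $e_k$. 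In particular, when $\tau$ is a hypertree every vertex ends up in a single cycle, so $\sigma$ is irreducible on the full vertex set. That $\sigma_1 \cdots \sigma_k$ is a reduced product then follows from the fact that the reflection length in $\sym_{n+1}$ equals $(n+1)$ minus the number of cycles: applying Proposition~\ref{prop:edge-sizes} component by component gives $\sum_i (\size(e_i) - 1) = (n+1) - c$, where $c$ is the number of components of $\tau$, and here the left-hand side is $\sum_i \ell_R(\sigma_i)$ while the right-hand side is $\ell_R(\sigma)$.

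The main obstacle is the symmetric-group cycle-amalgamation fact. While classical, its verification has to track the left-to-right multiplication convention fixed by the explicit formula in Lemma~\ref{lem:two-ips}, and one has to directly compute the image of each element under $\sigma'\sigma_k$ to confirm that the $m$ chosen cycles really do weave together into a single cycle rather than break into several smaller ones. Once that routine but convention-sensitive computation is recorded, the induction itself is entirely formal.
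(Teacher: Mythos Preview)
Your argument is correct. The cycle-amalgamation fact you isolate is exactly right under the left-to-right convention fixed by Lemma~\ref{lem:two-ips}: if $\gamma=(a_1,\dots,a_m)$ and the $a_i$ lie in distinct cycles $C_i$ of $\pi$, then tracing images shows that $\pi\gamma$ splices the $C_i$ into one cycle in the order dictated by $\gamma$, leaving all other cycles of $\pi$ untouched. Your reducedness check via Proposition~\ref{prop:edge-sizes} and the formula $\ell_R(\sigma)=(n+1)-c(\sigma)$ is also fine.

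The paper takes a slightly different route. It separates the two cases: for hypertrees it inducts using only the special case $m=2$ already recorded in Lemma~\ref{lem:two-ips} (together with commutations of disjoint factors to bring a sharing pair into adjacent positions), and then reduces hyperforests to hypertrees by observing that hyperedge permutations in different components commute. Your approach folds both cases into a single induction by proving the general $m$-cycle amalgamation statement up front; this is a bit more to verify at the inductive step, but it avoids the commutation bookkeeping and gives the hyperforest statement for free rather than as a separate reduction. Either way the content is the same elementary symmetric-group computation, just packaged differently.
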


\begin{proof}
  For hypertrees this follows from Lemma~\ref{lem:two-ips} and an easy
  induction.  For hyperforests it is enough to note that the hyperedge
  permutations in distinct connected components pairwise commute.
\end{proof}

The next two lemmas focus on turning a reduced product of irreducible
permutations into a properly ordered noncrossing hyperforest.

\begin{lem}[Reduced products and hyperforests]\label{lem:hyperforest}
  If $\sigma_1 \sigma_2 \cdots \sigma_k = \sigma$ is a reduced
  factorization of a permutation into irreducible permutations, the
  hyperedges of the factors form a hyperforest $\tau$.
\end{lem}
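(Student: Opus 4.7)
The plan is to argue the contrapositive: assuming the hyperedges $e_1,\ldots,e_k$ of the factors $\sigma_1,\ldots,\sigma_k$ do not form a hyperforest, I will derive an inequality forcing $\sum_i \ell_R(\sigma_i) > \ell_R(\sigma)$, contradicting the reduced product assumption. The ingredients are Proposition~\ref{prop:edge-sizes} on one side and the standard cycle-counting formula for reflection length in the symmetric group on the other.

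First I would set up the ledger. Each factor $\sigma_i$ is an irreducible permutation whose unique nontrivial cycle has length $s_i = \size(e_i)$, so $\ell_R(\sigma_i) = s_i - 1$ and $\sum_i \ell_R(\sigma_i) = \sum_i (s_i-1)$. Let $V = \bigcup_i e_i$, let $C_1,\ldots,C_c$ be the connected components of the hypergraph with vertex set $V$ and hyperedge set $\{e_1,\ldots,e_k\}$, and write $n_j = \card{C_j}$ and $v = \card{V} = \sum_j n_j$. Applying Proposition~\ref{prop:edge-sizes} inside each component yields
\[
\sum_{e_i\subseteq C_j}(s_i-1) \geq n_j - 1,
\]
with equality iff $C_j$ is a hypertree. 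Summing over $j$ gives $\sum_i (s_i-1) \geq v - c$, with equality iff every component is a hypertree, i.e.\ iff $\{e_1,\ldots,e_k\}$ is a hyperforest.

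Next I would match this with an upper bound $\ell_R(\sigma) \leq v - c$. The crucial observation is that two factors whose hyperedges lie in different components have disjoint supports, hence commute; using Proposition~\ref{prop:rewriting} I can therefore rearrange (without leaving the reduced factorization) to write $\sigma = \sigma^{(1)}\sigma^{(2)}\cdots\sigma^{(c)}$, where $\sigma^{(j)}$ is the product, in the induced order, of the factors whose hyperedge lies in $C_j$. Since $\sigma^{(j)}$ has support inside $C_j$, the partition $V = C_1\sqcup\cdots\sqcup C_c$ is $\sigma$-invariant, so $\sigma$ has at least $c$ orbits on $V$; combined with the formula $\ell_R(\sigma) = v - (\text{number of } \sigma\text{-orbits on } V)$ for reflection length in $\sym_{n+1}$, this gives $\ell_R(\sigma) \leq v-c$.

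Putting the two bounds together, if $\{e_1,\ldots,e_k\}$ fails to be a hyperforest then some component contains a nontrivial simple cycle, the first inequality becomes strict, and
\[
\sum_i \ell_R(\sigma_i) \;>\; v - c \;\geq\; \ell_R(\sigma),
\]
contradicting the assumption that $\sigma_1\cdots\sigma_k$ is reduced. I do not anticipate a real obstacle here; the only step that is not a bare count is the rearrangement into the block form $\sigma^{(1)}\cdots\sigma^{(c)}$, but because factors with disjoint supports commute this is immediate. Everything else is a clean application of Proposition~\ref{prop:edge-sizes} and the cycle-count formula for $\ell_R$.
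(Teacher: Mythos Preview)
Your proof is correct and is essentially the paper's own argument: both isolate the factors belonging to each connected component (using commutation of disjoint-support permutations, i.e.\ Proposition~\ref{prop:rewriting} together with Proposition~\ref{prop:rp}) and then play the reduced-product equation off against the inequality of Proposition~\ref{prop:edge-sizes} to force equality, hence a hypertree, on each component. The only difference is cosmetic: you package everything as a single global inequality $\sum_i\ell_R(\sigma_i)\ge v-c\ge\ell_R(\sigma)$ and argue by contrapositive, whereas the paper treats one component at a time directly.
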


\begin{proof}
  Let $\tau$ be the hypergraph whose hyperedges correspond to the
  factors $\sigma_i$.  For each connected component of $\tau$, it is
  possible to use Proposition~\ref{prop:rewriting} and
  Proposition~\ref{prop:rp} to find a reduced product that only
  contains the factors $\sigma_i$ corresponding to the hyperedges in
  this component.  Let $\tau'$ be the connected hypergraph with a
  vertex set restricted to this component and with only these
  hyperedges.  The equation defining a reduced product implies that
  equality holds in the equation in Proposition~\ref{prop:edge-sizes}.
  Therefore $\tau'$ is a hypertree and $\tau$ is a hyperforest.
\end{proof}

\begin{lem}[Weakly noncrossing and properly ordered]\label{lem:wnc-and-po}
  Let $\tau$ be an ordered hyperforest whose vertices have been
  identified with the vertices of a convex polygon in the plane. If
  the permutation of $\tau$ is a noncrossing permutation then its
  hyperedges are pairwise weakly noncrossing and they are properly
  ordered.  In particular, if the permutation of $\tau$ is a
  noncrossing permutation then $\tau$ is a properly ordered
  noncrossing hyperforest.
\end{lem}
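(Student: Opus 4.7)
The plan is to extract arbitrary pairs of factors from the reduced factorization and reduce to the two-hyperedge case covered by Lemma~\ref{lem:two-ips}. By Lemma~\ref{lem:ohfp}, the product $\sigma_1 \sigma_2 \cdots \sigma_k = \sigma$ of the hyperedge permutations in the given order is automatically a reduced factorization. By Proposition~\ref{prop:reduced-factors}, since $\sigma$ is noncrossing, every factor $\sigma_i$ is also a noncrossing permutation.

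Next, fix any pair of indices $i < j$. Proposition~\ref{prop:rewriting} produces another length-$k$ reduced factorization of $\sigma$ whose first two factors are $\sigma_i$ and $\sigma_j$ in that order, and then Proposition~\ref{prop:rp} allows us to collapse this prefix into the single factor $\sigma_i \sigma_j$ while keeping the overall factorization of $\sigma$ reduced. Consequently $\sigma_i \sigma_j \leq \sigma \leq c$ in the reflection order, so $\sigma_i \sigma_j$ is itself a noncrossing permutation.

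Because $\tau$ is a hyperforest, $e_i$ and $e_j$ share at most one vertex. In the disjoint case the cycles of $\sigma_i$ and $\sigma_j$ commute and assemble into a noncrossing permutation only when the partition whose nontrivial blocks are $e_i$ and $e_j$ is itself noncrossing, which forces the two hyperedges to be noncrossing. In the shared-vertex case, Lemma~\ref{lem:two-ips} applies directly: the product $\sigma_i \sigma_j$ is noncrossing precisely when $e_i$ and $e_j$ are weakly noncrossing and $\sigma_i$ lies to the left of $\sigma_j$ from the perspective of the shared vertex. In either case the pair is weakly noncrossing, and since $i < j$ was arbitrary, $\tau$ is a noncrossing hyperforest.

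For the proper-ordering claim, recall from Definition~\ref{def:edge-posets} that the cover relations of the hyperedge poset $\poset(\tau)$ are exactly the local-linear-order relations at shared vertices. The shared-vertex analysis above shows that whenever $i < j$ and $e_i, e_j$ share a vertex $v$, $e_i$ precedes $e_j$ in the local linear order at $v$, i.e., $e_i < e_j$ is a covering relation in $\poset(\tau)$. Thus the prescribed linear ordering of hyperedges is consistent with every covering relation of $\poset(\tau)$ and is therefore a linear extension, which is precisely the definition of a proper ordering. The only real obstacle is the extraction step: once Proposition~\ref{prop:rewriting} is used to pull an arbitrary non-adjacent pair of factors to the front of a reduced factorization, the rest of the argument is an immediate appeal to Lemma~\ref{lem:two-ips} and the standard bijection between noncrossing permutations and noncrossing partitions.
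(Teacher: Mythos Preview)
Your argument is correct and mirrors the paper's proof almost exactly: both use Lemma~\ref{lem:ohfp} for reducedness, then Proposition~\ref{prop:rewriting} and Proposition~\ref{prop:rp} to isolate an arbitrary pair $\sigma_i\sigma_j$ as a noncrossing factor, and finally split into the disjoint versus shared-vertex cases with Lemma~\ref{lem:two-ips} handling the latter. One small imprecision: when $e_i$ and $e_j$ share a vertex you only get $e_i < e_j$ in the local linear order, not necessarily a \emph{covering} relation of $\poset(\tau)$, but since the covering relations are among these pairs your conclusion that the ordering extends $\poset(\tau)$ still follows.
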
 

\begin{proof}
  Let $\sigma = \sigma_1 \sigma_2 \cdots \sigma_k$ be the product of
  the hyperedge permutations of $\tau$ in the specified order.  By
  Lemma~\ref{lem:ohfp} this is a reduced factorization of $\sigma$.
  By Proposition~\ref{prop:rewriting} for any two hyperedge
  permutations $\sigma_i$ and $\sigma_j$ with $i<j$ there is another
  reduced factorization of $\sigma$ into $k$ permutations where
  $\sigma_i$ and $\sigma_j$ are its first two factors.  Next, in the
  altered reduced factorization of $\sigma$ replace $\sigma_i
  \sigma_j$ with their product $\sigma'$.  The result stays reduced
  (Proposition~\ref{prop:rp}) and by
  Proposition~\ref{prop:reduced-factors} $\sigma'$ must be a
  noncrossing permutation.  Because $\tau$ is a hyperforest, the
  hyperedges of $\sigma_i$ and $\sigma_j$ are either disjoint or have
  exactly one vertex in common.  If they are disjoint, $\sigma_i$ and
  $\sigma_j$ commute, their order is irrelevant and the fact that
  $\sigma'$ is a noncrossing permutation means that their convex hulls
  must be noncrossing in the strong sense of being completely
  disjoint.  If the hyperedges of $\sigma_i$ and $\sigma_j$ have a
  vertex in common, then Lemma~\ref{lem:two-ips} and the fact that
  $\sigma'$ is noncrossing imply that $\sigma_i$ and $\sigma_j$ are
  weakly noncrossing and properly ordered.
\end{proof}

The final two lemmas are used to turn a properly ordered
noncrossing hyperforest into a reduced product of irreducible
permutations.  

\begin{lem}[Proper orderings and commutations]\label{lem:po-com}
  Let $\tau$ is a properly ordered noncrossing hyperforest.  If there
  are disjoint hyperedges in $\tau$ that are consecutive in the
  ordering, then they are incomparable in the hyperedge poset of
  $\tau$ and the new ordered noncrossing hyperforest obtaining by
  switching the order on these two is also proper.
\end{lem}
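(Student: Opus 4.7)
The plan is to handle the two assertions in sequence: first that disjoint consecutive hyperedges must be incomparable in $\poset(\tau)$, and then that a transposition of two incomparable consecutive elements in a linear extension is again a linear extension.

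For the first assertion, I would argue by contradiction using Proposition~\ref{prop:edge-posets}, which says that the edges of the Hasse diagram of $\poset(\tau)$ correspond precisely to pairs of hyperedges sharing a vertex (via adjacency in the local linear ordering at that shared vertex). Suppose $e$ and $e'$ are disjoint and consecutive in the ordering yet comparable in $\poset(\tau)$, say $e < e'$. Choose a saturated chain $e = e_0 \lessdot e_1 \lessdot \cdots \lessdot e_m = e'$ of covering relations in $\poset(\tau)$. If $m=1$ then $e$ and $e'$ are themselves joined by a Hasse edge, forcing them to share a vertex and contradicting disjointness. If $m \geq 2$, then $e_1$ is a hyperedge with $e < e_1 < e'$ in $\poset(\tau)$; since the ordering on $\tau$ is a linear extension of $\poset(\tau)$, the hyperedge $e_1$ must appear strictly between $e$ and $e'$ in the linear ordering, contradicting the hypothesis that $e$ and $e'$ are consecutive.

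For the second assertion, I would observe that transposing the positions of two consecutive incomparable elements in a linear extension of any poset yields another linear extension. Indeed, the only pair whose relative order in the linear ordering changes is $(e,e')$ itself, and incomparability makes the linear-extension constraint vacuous on that pair. For every other hyperedge $f$, the position of $f$ relative to both $e$ and $e'$ is unchanged by the swap (because $f$ is either before $e$ or after $e'$ in the original, hence still before both or after both in the new ordering), so no other extension constraint is violated.

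The argument is elementary once Proposition~\ref{prop:edge-posets} is in hand, so I do not anticipate a genuine obstacle; the only subtlety is the observation that a chain of length $\geq 2$ in $\poset(\tau)$ between $e$ and $e'$ injects an intermediate hyperedge into the ordering and thereby prevents $e$ and $e'$ from being consecutive. This is precisely where the hypothesis of properness (linear extension) does the work.
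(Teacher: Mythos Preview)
Your proposal is correct and essentially identical to the paper's own proof. The paper argues that disjoint comparable hyperedges $e < e'$ must be joined by a chain of covering relations of length $\ell > 1$ (since covering relations in $\poset(\tau)$ require a shared vertex, by Proposition~\ref{prop:edge-posets}), so $e_1$ lies strictly between them in any linear extension, preventing consecutiveness; the second assertion is then dismissed as ``clear,'' which is exactly the elementary transposition-of-incomparables argument you spell out.
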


\begin{proof}
  If $e$ and $e'$ are two hyperedges of $\tau$ that are disjoint but
  comparable in the ordering of the hyperedge poset, say with $e <
  e'$, it is because there is a sequence of covering relations $e =
  e_0 < e_1 < \cdots e_\ell = e'$ with $\ell>1$.  In particular, in
  the linear ordering of the hyperedges of $\tau$, $e$ and $e'$ are
  not consecutive because $e_1$ must occur between them.  Thus
  disjoint hyperedges that are consecutive in the ordering are
  incomparable in the ordering of the hyperedge poset and the rest is
  clear.
\end{proof}

The final lemma highlights the algebraic significance of being
properly ordered.

\begin{lem}[Proper orderings and noncrossing permutations]\label{lem:po-nc}
  The permutation of a properly ordered noncrossing hyperforest is a
  noncrossing permutation.
\end{lem}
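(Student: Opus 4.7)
My plan is to induct on the number of hyperedges of $\tau$, using merging to reduce the problem. For the base case, if no two hyperedges of $\tau$ share a vertex, then $\tau$ is a noncrossing partition, the hyperedge permutations pairwise commute, and their product is the noncrossing permutation of that partition by definition.

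For the inductive step, suppose two hyperedges share a vertex, so that $\poset(\tau)$ has at least one covering relation. I would choose a covering pair $(e,e')$ with $e<e'$ in $\poset(\tau)$, occurring at positions $i<j$ in the proper ordering, with $j-i$ minimal. I next claim that every hyperedge $e''$ strictly between them in the linear extension is disjoint from both $e$ and $e'$. Indeed, if $e''$ shared a vertex with $e$ then $e$ and $e''$ would be comparable in $\poset(\tau)$, and proper ordering forces $e<e''$. Since the Hasse diagram of $\poset(\tau)$ is a tree (Proposition~\ref{prop:edge-posets}), there is a unique cover $e^{*}$ of $e$ lying on the upward path from $e$ to $e''$. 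If $e^{*}=e'$ then $e'\le e''$, forcing $j\le m<j$, a contradiction; otherwise $(e,e^{*})$ is a covering pair with $e^{*}$ appearing at a position in $[i+1,m]$, hence at distance $<j-i$, contradicting minimality. A symmetric argument rules out $e''$ sharing a vertex with $e'$.

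Applying Lemma~\ref{lem:po-com} repeatedly, I would then commute $e'$ leftward past each intermediate hyperedge (all disjoint from $e'$), preserving both the overall permutation of $\tau$ and the proper ordering. This brings $e$ and $e'$ into consecutive positions. Because $e<e'$ in $\poset(\tau)$ means $e$ lies to the left of $e'$ at their shared vertex (Definition~\ref{def:local-order}), Lemma~\ref{lem:two-ips} says that $\sigma_{e}\sigma_{e'}$ equals the noncrossing irreducible permutation of the merged hyperedge $e\cup e'$. Replacing the adjacent pair $\sigma_{e}\sigma_{e'}$ by this single merged permutation yields a properly ordered noncrossing hyperforest $\tau'$ with one fewer hyperedge (its Hasse diagram is obtained from that of $\tau$ by contracting the edge between $e$ and $e'$, by Proposition~\ref{prop:he-merge}), and the permutation of $\tau'$ equals the permutation of $\tau$. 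By induction the permutation of $\tau'$ is a noncrossing permutation, and the result follows.

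The main obstacle is the minimality argument in the second paragraph, which is where the tree structure of the hyperedge poset does the real work: it is exactly what prevents an intermediate hyperedge from sharing a vertex with either end of the minimal covering pair without producing an even closer covering pair and violating minimality. Once that is in hand, the rest is routine bookkeeping using the commutativity of disjoint irreducible permutations and the already-established compatibility of $\poset(\tau)$ with the merging operation.
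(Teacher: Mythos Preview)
Your proof is correct and follows essentially the same inductive strategy as the paper: locate an innermost vertex-sharing pair of hyperedges (your minimal-distance covering pair coincides with the paper's ``innermost'' choice), commute one past the disjoint intermediates using Lemma~\ref{lem:po-com}, merge via Lemma~\ref{lem:two-ips}, and apply the inductive hypothesis to the smaller hyperforest. Your justification that the merged object is again a properly ordered noncrossing hyperforest, via Proposition~\ref{prop:he-merge} and the contraction description of $\poset(\tau')$, is somewhat more explicit than the paper's, but the argument is the same.
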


\begin{proof}
  Let $\sigma_1 \sigma_2 \cdots \sigma_k =\sigma$ be the permutation
  of a properly ordered noncrossing hyperforest $\tau$ with $k$
  hyperedges.  When $\tau$ is a noncrossing partition there is nothing
  to prove, and note that this includes the case where $k=1$.  So
  suppose it is true for all properly ordered noncrossing hyperforests
  with fewer hyperedges and that $\tau$ is a not a noncrossing
  partition.  Let $\sigma_i$ and $\sigma_j$ be factors that do not
  commute because they contain a vertex in common and select $i < j$
  so that they are ``innermost'' in the sense that all other pairs of
  factors in the portion $\sigma_i \sigma_{i+1} \cdots \sigma_j$ are
  disjoint and commute.  Use the commutations to reorder the factors
  so that $\sigma_i$ and $\sigma_j$ are adjacent.  By
  Lemma~\ref{lem:two-ips} the product $\sigma_i \sigma_j$ is an
  irreducible permutation $\sigma'$ and by Lemma~\ref{lem:hyperforest}
  and Lemma~\ref{lem:wnc-and-po} the reordered factorization of
  $\sigma$ with $\sigma'$ in place of $\sigma_i \sigma_j$ has factors
  that correspond to the hyperedges of a properly ordered noncrossing
  hyperforest.  By Lemma~\ref{lem:two-ips}, factoring $\sigma'$ back
  into $\sigma_i$ and $\sigma_j$ splits the hyperedge of $\sigma'$
  into a pair of properly ordered hyperedges $\sigma_i$ and
  $\sigma_j$.  Finally, undoing the commutations only changes the
  ordering and not the underlying noncrossing hyperforest and by
  Lemma~\ref{lem:po-com} it remains properly ordered.
\end{proof}

\begin{thm}[Reduced products and noncrossing hyperforests]\label{thm:rp-hf}
  For each noncrossing permutation $\sigma$ there is a natural
  bijection between the set of reduced factorizations of $\sigma$ into
  irreducible permutations and the set of properly ordered noncrossing
  hyperforests $\tau$ with $\sigma$ as its noncrossing permutation.
\end{thm}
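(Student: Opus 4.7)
The plan is to assemble the two directions of the bijection directly from the preceding lemmas and then check that they are mutually inverse; the substantive work is already contained in the lemmas, and the theorem itself amounts to a careful packaging argument.

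Forward direction: given a reduced factorization $\sigma_1 \sigma_2 \cdots \sigma_k = \sigma$ of a noncrossing permutation $\sigma$ into irreducible permutations, Proposition~\ref{prop:reduced-factors} implies that every factor $\sigma_i$ is itself a noncrossing irreducible permutation, hence equals the canonical permutation of its support hyperedge under the map of Definition~\ref{def:he-ip}. Lemma~\ref{lem:hyperforest} asserts that these hyperedges assemble into a hyperforest $\tau$, and Lemma~\ref{lem:wnc-and-po} upgrades this to a properly ordered noncrossing hyperforest. By Lemma~\ref{lem:ohfp}, the cycle supports of the product $\sigma$ are exactly the connected components of $\tau$, and since a noncrossing permutation is uniquely determined by the partition of its cycle supports together with the clockwise orientation on each block (Definition~\ref{def:he-ip}), this product coincides with the noncrossing permutation of $\tau$. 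I would therefore define the forward map to send the factorization to this properly ordered noncrossing hyperforest.

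Reverse direction: given a properly ordered noncrossing hyperforest $\tau$ with noncrossing permutation $\sigma$, replace each hyperedge by its canonical irreducible noncrossing cycle and multiply them in the prescribed order. Lemma~\ref{lem:ohfp} guarantees the product is a reduced product of irreducible permutations with cycle type determined by the connected components of $\tau$, and Lemma~\ref{lem:po-nc} guarantees the product is a noncrossing permutation. Applying the uniqueness observation again, this product coincides with the noncrossing permutation of $\tau$, namely $\sigma$, so the result is indeed a reduced factorization of $\sigma$ into irreducible permutations.

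Finally, these two maps are mutually inverse because both simply pass between a hyperedge and its unique noncrossing irreducible cycle while preserving the linear order on hyperedges versus factors. Given the careful preparation in Sections~\ref{sec:perm-product} and~\ref{sec:rp-nchf}, the only point that requires a moment's care is the uniqueness statement identifying the ordered product of canonical hyperedge permutations with the noncrossing permutation of the underlying noncrossing hyperforest, and this is immediate once the cycle supports have been pinned down. I do not anticipate any further obstacle.
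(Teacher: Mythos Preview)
Your proposal is correct and follows essentially the same route as the paper's proof: both directions invoke exactly the same chain of lemmas (Proposition~\ref{prop:reduced-factors}, Lemma~\ref{lem:hyperforest}, and Lemma~\ref{lem:wnc-and-po} forward; Lemma~\ref{lem:ohfp} and Lemma~\ref{lem:po-nc} backward). You are in fact slightly more careful than the paper, since you explicitly verify that the noncrossing permutation of the resulting hyperforest coincides with $\sigma$ and that the two maps are mutually inverse, points the paper leaves implicit.
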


\begin{proof}
  Let $\sigma_1 \sigma_2 \cdots \sigma_k = \sigma$ be a reduced
  factorization of a noncrossing permutation $\sigma$ into irreducible
  permutations.  By Proposition~\ref{prop:reduced-factors} the factors
  $\sigma_i$ are noncrossing permutations.  In particular they are the
  permutations of the hyperedges to which they correspond.  By
  Lemma~\ref{lem:hyperforest} these hyperedges are form a
  hyperforest $\tau$ that by Lema~\ref{lem:wnc-and-po} is noncrossing
  and properly ordered.  In the other direction let $\tau$ be a
  properly ordered noncrossing hyperforest.  By definition, its
  permutation $\sigma$ is a product of irreducible permutations, by
  Lemma~\ref{lem:ohfp} this product is reduced, and by
  Lemma~\ref{lem:po-nc} the permutation $\sigma$ is a noncrossing
  permutation.
\end{proof}

The following corollary is an immediate consequence of
Theorem~\ref{thm:rp-hf} applied to the case where $\sigma$ is the
Coxeter element $c$ and the reduced factorizations considered have
maximal length.  These correspond to maximal chains in the noncrossing
partition lattice and to the top-dimensional simplices in the
noncrossing partition link that I call partition chambers.  Note that
the labels on a maximal chain are reflections which are automatically
irreducible.

\begin{cor}[Chambers and noncrossing trees]
  The partition chambers in the noncrossing partition link
  $\link(\ncparts_{n+1})$ are bijectively labelled by properly ordered
  noncrossing trees on $n+1$ vertices.
\end{cor}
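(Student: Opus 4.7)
The plan is to derive this corollary as a direct specialization of Theorem~\ref{thm:rp-hf} combined with Proposition~\ref{prop:rp-chains}, using that the Coxeter element has reflection length exactly~$n$.

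First I would unpack what a partition chamber is. By definition, the partition chambers are the top-dimensional simplices of $\link(\ncparts_{n+1})$, which are the simplices corresponding to maximal chains in $\ncparts_{n+1}$ between its two bounding elements. Identifying $\ncparts_{n+1}$ with the interval $[e,c]$ in $\sym_{n+1}$ under the reflection order, Proposition~\ref{prop:rp-chains} gives a bijection between these maximal chains and reduced factorizations $\sigma_1 \sigma_2 \cdots \sigma_n = c$. The key observation is that because the chain is \emph{maximal} in a poset graded by reflection length with $\ell_R(c)=n$, each factor $\sigma_i$ must have $\ell_R(\sigma_i)=1$, i.e.\ be a reflection. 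Reflections in $\sym_{n+1}$ are transpositions, which are irreducible permutations in the sense of Definition~\ref{def:he-ip}.

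Next I would invoke Theorem~\ref{thm:rp-hf} with $\sigma = c$: reduced factorizations of $c$ into irreducible permutations correspond bijectively to properly ordered noncrossing hyperforests $\tau$ whose noncrossing permutation equals $c$. Two things must be checked to turn such a hyperforest into a noncrossing tree. First, because the noncrossing permutation of $\tau$ records the partition given by its connected components and $c$ corresponds to the one-block partition on $[n+1]$, the hyperforest $\tau$ must have a single connected component, hence be a noncrossing hypertree. Second, because each factor $\sigma_i$ is a transposition, every hyperedge of $\tau$ has size exactly $2$, so $\tau$ is an actual noncrossing tree on $n+1$ vertices. The ``properly ordered'' qualifier transfers unchanged from Theorem~\ref{thm:rp-hf}.

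Composing the two bijections yields the desired correspondence between partition chambers and properly ordered noncrossing trees on $n+1$ vertices. I do not anticipate any serious obstacle here: all the work lies upstream in Theorem~\ref{thm:rp-hf} and Proposition~\ref{prop:rp-chains}, and the only subtlety is the routine verification that maximality of the chain forces each factor to be a reflection (so one does not need to worry about irreducible factors of larger reflection length, which would otherwise produce hyperedges of size greater than~$2$ and thus noncrossing hypertrees rather than noncrossing trees).
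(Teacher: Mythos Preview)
Your proposal is correct and follows essentially the same route as the paper: the paper derives the corollary by specializing Theorem~\ref{thm:rp-hf} to $\sigma = c$ and noting that maximal chains force each factor to be a reflection, hence irreducible. Your write-up simply makes explicit the two additional observations (connectedness gives a hypertree, size-$2$ hyperedges give a tree) that the paper leaves implicit.
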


Theorem~\ref{thm:rp-hf} can also be used to create labels on all the
simplices in the noncrossing partition link, but the labels on
arbitrary chains between the bounding elements of the noncrossing
partition lattice are merely noncrossing permutations and not
necessarily irreducible ones.  To accommodate this one can expand the
set of linear orderings.

\begin{defn}[Weak proper orderings]
  A weak linear ordering of a set $S$ was defined in
  Definition~\ref{def:chains} as a function $f$ from a set $S$ to some
  linearly ordered set.  When the set $S$ is replaced by a poset $P$
  there are more distinctions to be made depending on the extent to
  which the function respects the structure of the poset.  When the
  function $f$ is injective and $p < p'$ implies $f(p) < f(p')$, this
  is the usual notion of a \emph{linear extension of $P$}.  When the
  injectivity assumption is dropped but $p < p'$ still implies $f(p) <
  f(p')$ it is called a \emph{weak linear extension of $P$}.  Finally,
  when the only condition satisfied is that $p \leq p'$ implies $f(p)
  \leq f(p')$ it is called an \emph{extremely weak linear extension of
    $P$}.  A \emph{weakly ordered hyperforest} is a hyperforest with a
  weak linear ordering of its set of hyperedges. A \emph{weakly
    properly ordered noncrossing hyperforest} is a weakly ordered
  hyperforest where the weak ordering is a weak linear extension of
  its hyperedge poset.
\end{defn}

With these definitions in place, the following is immediate.

\begin{cor}[Simplices and noncrossing hypertrees]\label{cor:simplices}
  The simplices in the noncrossing partition link
  $\link(\ncparts_{n+1})$ are bijectively labelled by weakly properly
  ordered noncrossing hypertrees on $n+1$ vertices.
\end{cor}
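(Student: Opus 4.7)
The approach is to chain together two earlier bijections through the process of refining a reduced factorization into irreducibles. By Proposition~\ref{prop:rp-chains} applied at the bounding elements of $\ncparts_{n+1}$, the simplices in $\link(\ncparts_{n+1})$ correspond bijectively to reduced factorizations $\sigma_1 \sigma_2 \cdots \sigma_k = c$ of the Coxeter element into noncrossing permutations of positive reflection length, with a simplex of dimension $k-2$ matching a factorization of length $k$. By Theorem~\ref{thm:rp-hf} (applied with $\sigma = c$, which forces the hyperforest to be a hypertree since $c$ is a single cycle), the reduced factorizations of $c$ into \emph{irreducible} permutations correspond to properly ordered noncrossing hypertrees on $n+1$ vertices. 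The corollary should emerge by interpolating between these two identifications.

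Given a reduced factorization $\sigma_1 \cdots \sigma_k = c$, I would decompose each $\sigma_i$ into its commuting irreducible cycles $\tau_{i,1}, \ldots, \tau_{i,m_i}$ and concatenate them in any internal order to produce a reduced factorization of $c$ into irreducibles — reducedness is preserved because $\ell_R(\sigma_i) = \sum_j \ell_R(\tau_{i,j})$, so the total reflection length is unchanged. Theorem~\ref{thm:rp-hf} then supplies a properly ordered noncrossing hypertree $\tau$ whose hyperedges are precisely the $\tau_{i,j}$, and coarsening back to the original grouping by $\sigma_i$ defines a function $f\colon E(\tau) \to \{1,\ldots,k\}$, i.e. a weak linear ordering of the hyperedges.

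The main obstacle is verifying that $f$ is a weak linear extension of $\poset(\tau)$. The cleanest argument is contrapositive: if $f(e) = f(e')$ then $e$ and $e'$ are both cycles of a single $\sigma_i$, hence vertex-disjoint and commuting, so the two possible internal orderings of them within the refined factorization each yield, via Theorem~\ref{thm:rp-hf}, a proper ordering of $\tau$; this forces $e$ and $e'$ to be incomparable in $\poset(\tau)$. For the inverse direction, I would note that in a weakly properly ordered noncrossing hypertree any two hyperedges sharing a value of the weak ordering must be incomparable in $\poset(\tau)$ and therefore vertex-disjoint — a shared vertex would, via the local linear order at that vertex, produce a chain of covering relations making them comparable (cf.~Proposition~\ref{prop:edge-posets}). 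The hyperedge permutations at each level thus commute and multiply to a well-defined noncrossing permutation $\sigma_i$, and any total refinement of the weak ordering to a proper ordering supplies, via Theorem~\ref{thm:rp-hf}, the reducedness of the resulting factorization $\sigma_1 \cdots \sigma_k = c$. The two procedures are mutual inverses by inspection of the cycle decomposition.
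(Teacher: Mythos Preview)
Your argument is correct and is precisely the unpacking of what the paper declares ``immediate'': refine each factor of a reduced factorization of $c$ into its irreducible cycles, invoke Theorem~\ref{thm:rp-hf} to obtain a properly ordered noncrossing hypertree, and record the original grouping as a weak ordering; conversely, multiply the hyperedge permutations at each level of a weakly properly ordered noncrossing hypertree.  One small presentational point: your ``contrapositive'' paragraph only treats the case $f(e)=f(e')$, whereas the weak linear extension condition also requires ruling out $f(e)>f(e')$ when $e<e'$ in $\poset(\tau)$; this is immediate since the refined factorization is itself a linear extension of $\poset(\tau)$ (by Theorem~\ref{thm:rp-hf}) compatible with the $f$-grouping, so $e<_{\poset(\tau)}e'$ forces $f(e)\le f(e')$, and your argument then excludes equality.
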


The noncrossing hypertree with a weak proper ordering of its
hyperedges is called the \emph{standard name} of the corresponding
simplex in the noncrossing partition link.  As a final remark, I
should note that many of the results in this section are well-known
although they are usually stated in a very different language.  They
are closely related to the various results that descend from the early
work of Goulden and Jackson \cite{GoJa92}, from \cite{GoPe93} to the
recent article by Du and Liu \cite{DuLiu15}, that discuss permutation
factorizations using multi-noded trees, as well as the article by
Irving \cite{Ir09} that discusses minimal transitive factorizations.
Both of these concepts are closely related to hypertrees.  The closest
match is to the article \cite{BHN} which includes drawings of
noncrossing hypertrees.

\section{Theorem~\ref{main:geometry}: Topology and Geometry\label{sec:top-geo}}

This section completes the proof of Theorem~\ref{main:geometry} by
constructing a natural homeomorphism between the noncrossing hypertree
complex and the noncrossing partition link.  For the sake of
readability, the noncrossing hypertree complex
$\complex(\nchtrees_{n+1})$ is denoted $C$ and the noncrossing
partition link $\link(\ncparts_{n+1})$ is denoted $L$ throughout this
section.  At this point it should be quite clear that there is some
sort of a relationship between $C$ and $L$ since the top-dimensional
simplices in $C$ are labeled by noncrossing trees and the
top-dimensional simplices in $L$ are labeled by properly ordered
noncrossing trees and more general simplices are labeled by
noncrossing hypertrees and weakly properly ordered noncrossing
hypertrees.  The first step in the construction of a map from $C$ to
$L$ is to decide where to send the vertices of~$C$.

\begin{defn}[Vertex images]\label{def:vertices}
  Recall that the $(n+1)(n-1)$ basic noncrossing hypertrees on $n+1$
  vertices (Definition~\ref{def:basic}) are the vertices of the
  noncrossing hypertree complex $C$.  Let $\tau$ be one of these basic
  noncrossing hypertrees.  Its two hyperedges have a unique proper
  ordering and it thus corresponds to a unique vertex in the
  noncrossing partition link. The basic noncrossing hypertree $\tau$
  also corresponds to a chain in the noncrossing partition lattice
  with a unique element other than its endpoints at the bounding
  elements and this unique point is the noncrossing partition
  corresponding to the lower hyperedge.  In particular, the ordering
  of basic noncrossing hypertrees in an annular arrangement defined in
  Definition~\ref{def:basic} and shown in
  Figure~\ref{fig:two-hyperedges} is none other than the ordering of
  the corresponding elements inside the noncrossing partition lattice.
\end{defn}

The other simplices in $C$ are sent to unions of simplices in $L$, and
the rough idea is to send each simplex in $C$ labeled by a noncrossing
hypertree $\tau$ to the union of the simplices in $L$ labeled by all
proper orderings of this same noncrossing hypertree $\tau$.  What
needs to be shown is that these unions form simplicial shapes, that
they overlap in the required fashion, and that the resulting
continuous map is bijective.  As an introduction to the general proof,
consider the two simplicial complexes when there are $4$ vertices and
both complexes are graphs.

\begin{figure}
  \begin{tikzpicture}[scale=.5]
    \begin{scope}[xshift=-4cm]
      \def\r{2}
      \def\R{8}
      \foreach \x in {1,2,...,8} {\coordinate (\x) at
        (90-45*\x:6cm);}
      \coordinate (9) at (45:\r);
      \coordinate (10) at (0:\r);
      \coordinate (11) at (135:\r);
      \coordinate (12) at (90:\r);
      \draw[GreenLine] (1)--(2)--(3)--(4)--(5)--(6)--(7)--(8)--cycle;
      \draw[GreenLine] (1)--(5) (2)--(10)--(6) (3)--(11)--(7) (4)--(12)--(8);
      \foreach \x in {1,2,...,12} {\fill (\x) circle (2mm);}
      \foreach \x in {9,10,11,12} {\fill[color=white] (\x) circle
        (1mm);}
      \newcommand{\placedots}{\foreach \x in {1,2,3,4} {\coordinate (d\x) at (315-90*\x:.7);}}
      \newcommand{\drawdots}{\foreach \x in {1,2,3,4} {\draw (d\x) node [smallDot] {};}}
      \foreach \x in {0,1,2,...,15} {\node (n\x) at (22.5*\x:\R) [Rect] {};}
      \begin{scope}[BluePoly]
        \begin{scope}[shift={(n0)}] \placedots \filldraw (d1)--(d2) (d2)--(d3)--(d4)--cycle; \drawdots \end{scope}
        \begin{scope}[shift={(n1)}] \placedots \draw (d1)--(d2)--(d3)--(d4); \drawdots \end{scope}
        \begin{scope}[shift={(n2)}] \placedots \filldraw (d1)--(d2)--(d3)--cycle (d3)--(d4); \drawdots \end{scope}
        \begin{scope}[shift={(n3)}] \placedots \draw (d2)--(d3)--(d4) (d1)--(d3); \drawdots \end{scope}
        \begin{scope}[shift={(n4)}] \placedots \filldraw (d3)--(d4)--(d1)--cycle (d2)--(d3); \drawdots \end{scope}
        \begin{scope}[shift={(n5)}] \placedots \draw (d2)--(d3)--(d4)--(d1); \drawdots \end{scope}
        \begin{scope}[shift={(n6)}] \placedots \filldraw (d2)--(d3)--(d4)--cycle (d1)--(d4); \drawdots \end{scope}
        \begin{scope}[shift={(n7)}] \placedots \draw (d3)--(d4)--(d2) (d1)--(d4); \drawdots \end{scope}
        \begin{scope}[shift={(n8)}] \placedots \filldraw (d1)--(d2)--(d4)--cycle (d3)--(d4); \drawdots \end{scope}
        \begin{scope}[shift={(n9)}] \placedots \draw (d3)--(d4)--(d1)--(d2); \drawdots \end{scope}
        \begin{scope}[shift={(n10)}] \placedots \filldraw (d1)--(d3)--(d4)--cycle (d1)--(d2); \drawdots \end{scope}
        \begin{scope}[shift={(n11)}] \placedots \draw (d2)--(d1)--(d3) (d1)--(d4); \drawdots \end{scope}
        \begin{scope}[shift={(n12)}] \placedots \filldraw (d1)--(d2)--(d3)--cycle (d1)--(d4); \drawdots \end{scope}
        \begin{scope}[shift={(n13)}] \placedots \draw (d4)--(d1)--(d2)--(d3); \drawdots \end{scope}
        \begin{scope}[shift={(n14)}] \placedots \filldraw (d1)--(d2)--(d4)--cycle (d2)--(d3); \drawdots \end{scope}
        \begin{scope}[shift={(n15)}] \placedots \draw (d1)--(d2)--(d3) (d2)--(d4); \drawdots \end{scope}
      \end{scope}
    \end{scope}
  \end{tikzpicture}
  \caption{A graph that can be viewed as either $\link(\ncparts_4)$ or
    $\complex(\nchtrees_4)$ depending on whether or not the white
    vertices are included.\label{fig:homeo4}}
\end{figure}
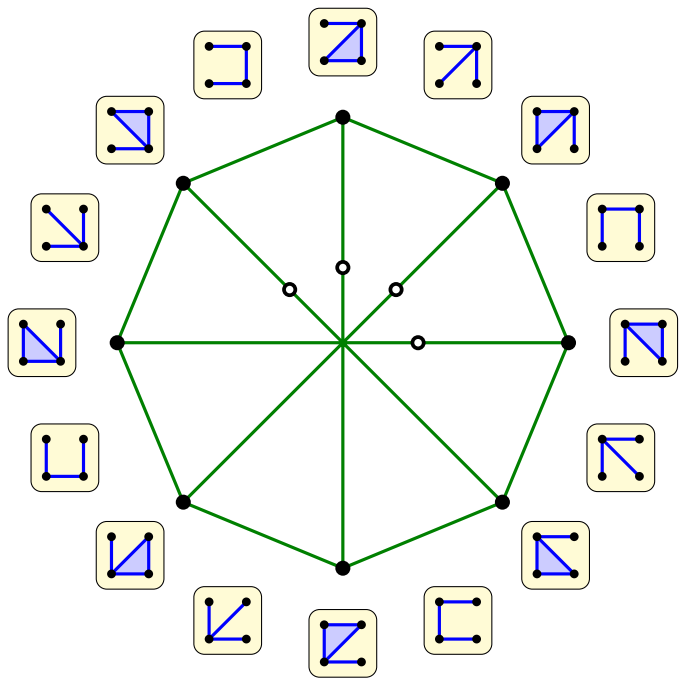

\begin{exmp}[Noncrossing hypertrees and partitions with $4$ vertices]
  The metric graph shown in Figure~\ref{fig:homeo4} can be viewed as
  either the noncrossing partition link $\ncparts_4$ when the $4$
  white vertices of degree $2$ are included, or as the noncrossing
  hypertree complex $\nchtrees_4$ when they are ignored.  In the
  noncrossing partition link there are $12$ vertices and $16$ edges
  all length $\frac{\pi}{3}$ and it is a subgraph of the spherical
  building shown in Figure~\ref{fig:bldg}.  When viewed as the
  noncrossing hypertree complex there are only $8$ vertices and $12$
  edges of varying lengths.  The $8$ edges around the outside have
  length $\frac{\pi}{3}$ and the four diagonal edges have length
  $\frac{2\pi}{3}$.  The noncrossing hypertree labels for the vertices
  and edges on the boundary have been drawn.  The diagonal edges are
  labeled by the four possible zig-zag trees.  The vertical diagonal,
  for example, is labeled by the tree that looks like the letter `Z'.
  The noncrossing partition labels correspond to weak proper orders on
  these hypertrees.  For every vertex and edge on the outside there is
  a unique such order and it is always a proper order.  The more
  interesting possibilities occur along the diagonals edges.  Let
  $\tau$ be the `Z' tree labeling the vertical diagonal.  The diagonal
  edge of the `Z' is below both other edges in its hyperedge poset and
  thus $\tau$ has two proper orderings depending on which of the other
  two edges is last, and this accounts for the two edges of the
  noncrossing partition link that are amalgamated to form the single
  edge in the noncrossing hypertree complex.  When a weakly proper
  order is chosen instead where both nondiagonal edges are last, the
  resulting simplex is the white vertex inbetween the two edges
  labeled by the proper orderings of $\tau$.
\end{exmp}

The previous sections have discussed two distinct ways of producing a
simplex in the noncrossing partition link $L$ from a noncrossing
hypertree.  The next remark compares them.

\begin{rem}[Two procedures]
  Definition~\ref{def:apartments-spheres} described how to turn a
  noncrossing hypertree $\tau$ with $k$ hyperedges into a simplicial
  sphere $\sphere(\tau)$ inside $L$ of dimension $k-2$ by considering
  all possible orderings of its hyperedges.  In particular, for each
  ordering, the hyperedges are added one at a time and the noncrossing
  partition of the noncrossing hyperforest formed by hyperedges added
  so far is recorded.  This essentially finds the join in the
  noncrossing partition lattice of the elements corresponding to the
  individual hyperedges.  The resulting chain in the noncrossing
  partition lattice describes a simplex in $L$.  The ordered
  noncrossing hypertree that produces this simplex can be considered a
  \emph{nonstandard name of the simplex} it produces.  It is merely a
  name because the process of converting ordering noncrossing
  hypertrees into simplices in $L$ is far from injective.  The
  \emph{standard name} for each simplex comes from
  Corollary~\ref{cor:simplices}.  It creates a weakly properly ordered
  noncrossing hypertree $\tau'$ from the labels of the adjacent
  elements in the chain that produces the simplex.
\end{rem}

One consequence of Lemma~\ref{lem:po-nc} is that when $\tau$ is a
properly ordered noncrossing hypertree, the two procedures agree.

\begin{lem}[Proper orderings and spheres]\label{lem:po-spheres}
  Let $\tau$ be a noncrossing hypertree with $k$ hyperedges.  For any
  proper ordering of its hyperedges, the simplex produced as part of
  the construction of $\sphere(\tau)$ is the same as the simplex whose
  standard label is this ordered noncrossing hypertree.  In
  particular, every simplex whose standard label is $\tau$ with some
  proper ordering of its hyperedges belongs to $\sphere(\tau)$.
\end{lem}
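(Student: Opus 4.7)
The plan is to show that, for any proper ordering $e_1, \ldots, e_k$ of the hyperedges of $\tau$, both procedures for producing a simplex in $L$ generate the same chain in the noncrossing partition lattice. For each $i$ let $\tau_i$ denote the sub-hyperforest with hyperedges $e_1, \ldots, e_i$ and let $p_i$ be the noncrossing permutation of the noncrossing partition formed by the connected components of $\tau_i$. The chain $p_0 < p_1 < \cdots < p_k$ is, by Definition~\ref{def:apartments-spheres}, exactly the chain whose simplex lies in $\sphere(\tau)$ for this ordering. The goal is then to show that the standard name produced from this chain is $\tau$ with the given proper ordering; this amounts to identifying each label $\lambda_i$ defined by $p_{i-1}\lambda_i = p_i$ with the hyperedge permutation $\sigma_{e_i}$, since once every block of the standard name consists of a single hyperedge, the weak proper ordering collapses to a strict proper ordering.

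The key identity to establish is $p_i = \sigma_{e_1}\sigma_{e_2}\cdots\sigma_{e_i}$, where the right-hand side is the product of the hyperedge permutations in the prescribed order, i.e.\ the permutation of the ordered hyperforest $\tau_i$ in the sense of Definition~\ref{def:ordered}. To invoke Lemma~\ref{lem:po-nc} I first need to check that the inherited ordering of $\tau_i$ is proper. A covering relation $f \lessdot f'$ in $\poset(\tau_i)$ arises from $f$ and $f'$ being adjacent in the local linear order at some shared vertex of $\tau_i$; the same two hyperedges retain the same left-to-right position at that vertex in $\tau$ (possibly with extra hyperedges of $\tau$ strictly between them), so $f \leq f'$ is already present in $\poset(\tau)$ and is respected by the original proper ordering. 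With this in hand, Lemma~\ref{lem:po-nc} gives that $\sigma_{e_1}\cdots\sigma_{e_i}$ is a noncrossing permutation and Lemma~\ref{lem:ohfp} forces its cycles to act within the connected components of $\tau_i$ with lengths matching the component sizes. Since a noncrossing permutation is determined by the noncrossing partition of its cycles, the product must equal $p_i$.

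The main obstacle, I expect, is the passage from ``cycle type matches component sizes'' (which is what Lemma~\ref{lem:ohfp} literally asserts) to ``the set of cycles equals the set of components as subsets of $[n+1]$''. This follows from the fact that hyperedge permutations in distinct components act on disjoint vertex sets and commute pairwise, which reduces the claim to the case of a single connected component, where it is essentially built into the inductive proof of Lemma~\ref{lem:ohfp} via Lemma~\ref{lem:two-ips}. Once the identity $p_i = \sigma_{e_1}\cdots\sigma_{e_i}$ is verified for every $i$, the equation $p_{i-1}\lambda_i = p_i$ forces $\lambda_i = \sigma_{e_i}$, the standard name of the constructed simplex reduces to the original properly ordered noncrossing hypertree, and the final ``in particular'' assertion of the lemma is then immediate.
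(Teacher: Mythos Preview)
Your proposal is correct and follows essentially the same route as the paper's proof: both reduce the lemma to the identity $p_i=\sigma_{e_1}\cdots\sigma_{e_i}$ and invoke Lemma~\ref{lem:po-nc} for this. The paper simply cites Lemma~\ref{lem:po-nc} and asserts that the partial product equals the noncrossing permutation of the sub-hyperforest, whereas you spell out the implicit steps---that the inherited ordering on $\tau_i$ is proper, and that Lemma~\ref{lem:ohfp} pins down the support of each cycle---so your write-up is a more careful version of the same argument rather than a different one.
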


\begin{proof}
  Let $\sigma_1 \sigma_2 \cdots \sigma_k = c$ be the reduced
  factorization of the Coxeter element $c$ that corresponds to the
  chain in $\ncparts_{n+1}$ and the simplex in $L$ labeled by the
  properly ordered noncrossing hypertree $\tau$.  By
  Lemma~\ref{lem:po-nc} for each $j$ the product $\sigma_1 \cdots
  \sigma_j$ is equal to the noncrossing permutation of the noncrossing
  hyperforest formed by the first $j$ hyperedges of $\tau$, which
  means that both procedures produce the same chain and the same
  simplex.
\end{proof}

As a consequence of Lemma~\ref{lem:po-spheres}, the noncrossing
partition link is a union of its noncrossing apartments.  Now that the
properly orderings of a noncrossing hypertree $\tau$ live in a common
sphere, it is easy to analyze how that they fit together.  The first
step is to note which simplices form hemispheres.

\begin{lem}[Hemispheres]\label{lem:hemispheres}
  Let $\tau$ be a noncrossing hypertree and let $e$ and $e'$ be
  hyperedges of $\tau$.  The union of the simplices in $\sphere(\tau)$
  that are constructed from the orderings of $\tau$ in which $e$ comes
  before $e'$ form a hemisphere.  
\end{lem}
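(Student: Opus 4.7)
The plan is to recognize that $\sphere(\tau)$ carries the simplicial structure of a Coxeter complex of type $A_{k-1}$, where $k$ is the number of hyperedges of $\tau$, and then invoke the general description of hemispheres in such complexes from Definition~\ref{def:roots-hemis}. The lemma is essentially a translation between the intrinsic description of simplices in $\sphere(\tau)$ in terms of hyperedge orderings and the coordinate-function description of simplices in the Coxeter complex framework established in Section~\ref{sec:bldg-ortho}.

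First I would unpack Definition~\ref{def:apartments-spheres}: the sphere $\sphere(\tau)$ arises from the special Boolean sublattice of $\ncparts_{n+1}$ whose elements are the noncrossing partitions determined by sub-hyperforests of $\tau$. By Proposition~\ref{rem:subspheres}, this special Boolean sublattice embeds in every noncrossing apartment containing it and gives rise to a subsphere of dimension $k-2$ whose simplicial structure is that of the Coxeter complex of type $A_{k-1}$, with the role of the indexing set $S$ from Definition~\ref{def:coxeter-complex} played by the set of $k$ hyperedges of $\tau$. Its top-dimensional simplices correspond bijectively to the $k!$ linear orderings of these hyperedges, and by Lemma~\ref{lem:po-spheres} the chambers coming from proper orderings sit inside this sphere with the standard names expected of them.

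Next I would apply Definition~\ref{def:roots-hemis} to this Coxeter complex. The pair of distinct hyperedges $e$ and $e'$ of $\tau$ determines a root $v_{e,e'} = \epsilon_e - \epsilon_{e'}$ in the ambient root system and a closed hemisphere $H_{e,e'}$ in $\sphere(\tau)$. The final sentence of Definition~\ref{def:roots-hemis} identifies $H_{e,e'}$ as precisely the union of the chambers whose corresponding linear orderings place $e$ before $e'$, together with all of their faces. Under the identification above, these are exactly the simplices of $\sphere(\tau)$ constructed from orderings of $\tau$ in which $e$ precedes $e'$, which is the content of the lemma.

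The only step requiring any care is verifying that the identification with the Coxeter complex of type $A_{k-1}$ really does translate ``$e$ appears before $e'$ in the ordering of hyperedges'' into ``the $e$-coordinate is smaller than the $e'$-coordinate'' in the sense of Definition~\ref{def:coxeter-complex}. This is immediate from how simplices in a type $A$ Coxeter complex are encoded by weak linear orderings: adding a hyperedge at step $i$ in the construction of a chain in $\sphere(\tau)$ assigns the value $i$ to that hyperedge in the corresponding coordinate function, so ``added earlier'' coincides with ``smaller coordinate value.'' No substantive obstacle arises; once the identification is in place, the result reduces to a direct appeal to the definition of a hemisphere in the Coxeter complex.
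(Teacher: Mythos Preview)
Your proposal is correct and follows the same route as the paper: both arguments reduce the lemma directly to the description of hemispheres in Definition~\ref{def:roots-hemis}, with the paper compressing this into a single sentence while you spell out the identification of $\sphere(\tau)$ with a type $A_{k-1}$ Coxeter complex. One small slip: under the paper's conventions the chamber for a linear ordering in which $i$ precedes $j$ lies in the half-space $x_i \geq x_j$ (not $x_i \leq x_j$), so ``added earlier'' corresponds to a \emph{larger} coordinate value; this does not affect your conclusion, since either inequality cuts out a hemisphere.
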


\begin{proof}
  This follows immediately from the description of the hemispheres in
  Definition~\ref{def:roots-hemis}.
\end{proof}

In fact, the proper orderings fit together to form a spherical
simplex.

\begin{lem}[Proper orderings and simplices]\label{lem:po-simplices}
  For every noncrossing hypertree $\tau$, the union of the closed
  simplices whose standard name is $\tau$ with a proper ordering of
  its hyperedges form a closed spherical simplex inside
  $\sphere(\tau)$.  As a consequence, the number of top-dimensional
  simplices in this union is equal to the number of linear extensions
  of an associated hyperedge poset $\poset(\tau)$.
\end{lem}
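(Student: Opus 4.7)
The plan is to realize the union described in the statement as an intersection of closed hemispheres inside $\sphere(\tau)$ and then apply Proposition~\ref{prop:intersections}. First, by Lemma~\ref{lem:po-spheres} every simplex whose standard name is $\tau$ with a proper ordering of its hyperedges lies inside $\sphere(\tau)$, so the entire union is a subcomplex of this sphere. Let $k$ denote the number of hyperedges of $\tau$. The top-dimensional simplices of $\sphere(\tau)$ correspond bijectively to linear orderings of these $k$ hyperedges, and such an ordering is \emph{proper} precisely when it is a linear extension of the hyperedge poset $\poset(\tau)$, that is, when it respects every covering relation of $\poset(\tau)$.

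Next, for each covering relation $e<e'$ of $\poset(\tau)$, Lemma~\ref{lem:hemispheres} provides a closed hemisphere $H_{e,e'}$ of $\sphere(\tau)$ whose top-dimensional chambers are exactly the orderings that place $e$ before $e'$. Therefore the top-dimensional simplices coming from proper orderings are exactly the chambers of the intersection $\bigcap_{e<e'} H_{e,e'}$, taken over all covering relations of $\poset(\tau)$. Taking closures in $\sphere(\tau)$ shows that the union in the statement equals this intersection of closed hemispheres: a face of a top-dimensional simplex lies in every $H_{e,e'}$ exactly when its standard name is some weak linear extension of $\poset(\tau)$, which is what ``a proper ordering of $\tau$'' degenerates to on the boundary.

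By Proposition~\ref{prop:edge-posets}, the covering relations of $\poset(\tau)$, oriented from the lower to the upper element, form a directed tree on the set of hyperedges of $\tau$. Since $\sphere(\tau)$ is built from the special Boolean sublattice indexed by these $k$ hyperedges (Definition~\ref{def:apartments-spheres}), it is simplicially a Coxeter complex of type $A_{k-1}$, and the hemispheres $H_{e,e'}$ are precisely the hemispheres of Definition~\ref{def:roots-hemis} attached to this directed tree. Proposition~\ref{prop:intersections} then applies and the intersection is a closed spherical simplex. Its top-dimensional chambers are indexed by the linear extensions of $\poset(\tau)$, yielding the stated consequence.

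The main obstacle is the transfer of Proposition~\ref{prop:intersections} from the standard Coxeter complex $\link(\bool_k)$ to the possibly metrically distorted subsphere $\sphere(\tau)$. This is essentially bookkeeping: the subsphere shares the full combinatorial structure of $\link(\bool_k)$, the chamber--ordering dictionary and the root/hemisphere description both transport verbatim, and the proof of Proposition~\ref{prop:intersections} only uses the linear independence of the poles attached to the edges of a tree, which is a property of the underlying root subspace and is unaffected by the embedding-dependent rescaling.
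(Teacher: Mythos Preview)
Your proof is correct and follows essentially the same route as the paper: place the properly ordered simplices inside $\sphere(\tau)$ via Lemma~\ref{lem:po-spheres}, identify the union with the intersection of the $k-1$ hemispheres coming from the covering relations of $\poset(\tau)$ via Lemma~\ref{lem:hemispheres}, and then invoke Proposition~\ref{prop:intersections} using the fact (Proposition~\ref{prop:edge-posets}) that the Hasse diagram of $\poset(\tau)$ is a tree. Your extra remarks on boundary faces and on transporting Proposition~\ref{prop:intersections} to the metrically distorted subsphere are reasonable clarifications of points the paper leaves implicit.
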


\begin{proof}
  Let $k$ be the number of hyperedges in $\tau$.  By
  Lemma~\ref{lem:po-spheres} every proper ordering of $\tau$ is the
  standard name of a simplex in $\sphere(\tau)$ and by
  Lemma~\ref{lem:hemispheres}, the $k-1$ covering relations in the
  hyperedge poset of $\tau$ determine $k-1$ hemispheres inside the
  $k-2$ dimensional sphere $\sphere(\tau)$.  A proper ordering on
  $\tau$ produces a simplex that lives in their intersection.  In
  fact, any simplex in the intersection of dimension $k-2$ must come
  from a proper ordering because its position implies that all
  covering relations are properly ordered.  Finally, the fact that the
  intersection of these hemispheres is a closed spherical simplex
  follows immediately from Proposition~\ref{prop:intersections}.  Note
  that the Hasse diagram of the hyperedge poset $\poset(\tau)$
  corresponds to the directed tree used in the proof of
  Proposition~\ref{prop:intersections}.
\end{proof}

For each noncrossing hypertree $\tau$ let $\simplex(\tau)$ be the
subcomplex of $L$ formed by the union of the closed simplices labeled
by the proper orderings of the hyperedges of $\tau$.  This is the
(subdivided) \emph{simplex of $\tau$} and the homeomoprhism between
$L$ and $C$ simply removes the subdivisions.  In the language of the
literature on factorizations into cycles, two simplices are merged if
and only if they correspond to \emph{equivalent} reduced
factorizations into cycles, the equivalence relation being the one
generated by permuting commuting adjacent cycles.

\begin{rem}[Interior simplices and facets]\label{rem:interior-facet}
  The standard names of the simplices of $L$ are not merely the proper
  orderings of noncrossing hypertrees, but weak proper orderings as
  well.  Let $\tau$ be a noncrossing hypertree.  Every weak proper
  ordering of the hyperedges of $\tau$ can be turned into a proper
  ordering by taking the noncrossing permutation that labels each
  adjacent pair in the corresponding chain in $\ncparts_{n+1}$ and
  factoring it into irreducbile permutations in some order.  In $L$
  this places the simplex corresponding to the weak proper ordering in
  the boundary of several simplices with proper orderings.  In
  particular, the weak proper ordering labels a simplex in the
  interior of $\simplex(\tau)$.  The simplices in the boundary of
  $\simplex(\tau)$, on the other hand, are labeled by (weak) proper
  orderings of noncrossing hypertrees obtained by merging hyperedges
  of $\tau$.
\end{rem}

\begin{figure}
  \begin{tikzpicture}[scale=.7]
    \begin{scope}[xshift=-4cm]
      \foreach \x in {1,2,...,5} {\coordinate (\x) at (90-72*\x:1.5cm);}
      \filldraw[YellowPoly] (0,0) circle (2.5cm);
      \draw[BlueLine] 
      (1) -- node[fill=yellow!20] {$d$} 
      (2)-- node[fill=yellow!20] {$b$} 
      (5)--  node[fill=yellow!20] {$c$} 
      (3)--  node[fill=yellow!20] {$a$} (4);
      \foreach \x in {1,2,...,5} {\fill (\x) circle (1mm);}
      \foreach \x in {1,2,...,5} {\draw (90-72*\x:2cm) node {\small \x};}
    \end{scope}
    \begin{scope}[xshift=2cm]
      \node (1) at (0,-1) [heposet] {$a$};
      \node (2) at (1,1) [heposet] {$c$};
      \node (3) at (2,-1) [heposet] {$b$};
      \node (4) at (3,1) [heposet] {$d$};
      \draw (1)--(2)--(3)--(4);
    \end{scope}
    \end{tikzpicture}
  \caption{A noncrossing tree $\tau$ with $5$ vertices and $4$ edges
    (left) and its hyperedge poset $\poset(\tau)$
    (right).\label{fig:union-A}}
\end{figure}
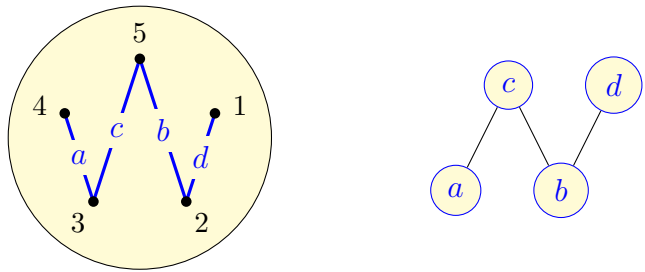

An example should help clarify these observations.

\begin{exmp}[Amalgamating simplices]
  Consider the noncrossing tree $\tau$ shown in
  Figure~\ref{fig:union-A} on the left.  It has $5$ vertices and $4$
  edges.  Its hyperedge poset, shown on the right, has the $4$ edges
  of $\tau$ as elements and $3$ covering relations.  There are exactly
  $5$ linear extensions of this hyperedge poset, or equivalently, $5$
  proper orderings of $\tau$.  These are $abcd$, $abdc$, $bacd$,
  $badc$ and $bdac$.  Each of these $5$ labels a maximal chain in
  $\ncparts_5$ and a spherical triangle in $L$.  Their union is shown
  in Figure~\ref{fig:union-B}.  The dotted lines and the vertex in the
  interior of this larger spherical triangle have standard labels that
  are weak proper orderings of $\tau$.  The sides of the triangle
  correspond to $\simplex(\tau')$ where $\tau'$ is one of the three
  noncrossing hypertrees formed by to merging two hyperedges of $\tau$
  as part of a covering relations in the noncrossing hypertree poset.
  The vertices arise from two such mergers.  The hypertrees that label
  these features are included in Figure~\ref{fig:union-B}.
\end{exmp}

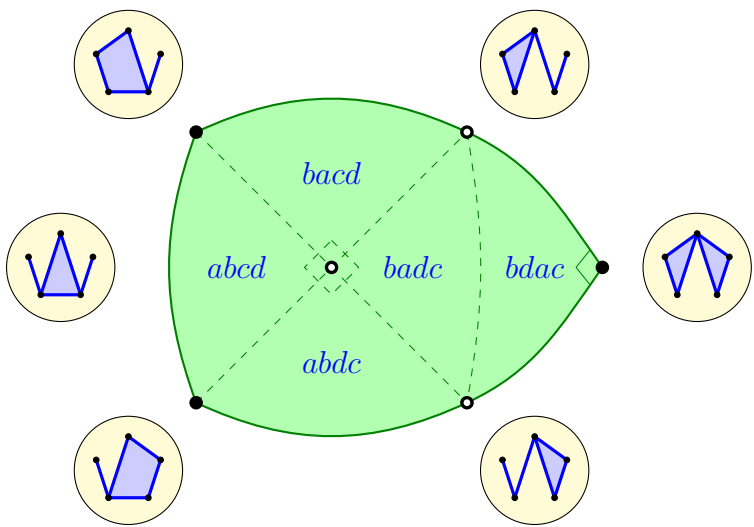
\begin{figure}
  \begin{tikzpicture}[scale=1]
    \begin{scope}[scale=1.8]
      \coordinate (1) at (-1,1);
      \coordinate (2) at (-1,-1);
      \coordinate (3) at (0,0);
      \coordinate (4) at (1,1);
      \coordinate (5) at (1,-1);
      \coordinate (6) at (2,0);
      \draw[GreenPoly] (6) [in=-25,out=125] to (4) 
      to [in=25,out=155] (1) to [out=-110,in=110] (2) 
      to [out=-25,in=-155] (5) [out=25,in=-125] to (6);
      \begin{scope}[GreenLine,dashed,thin]
        \def\r{.2}
        \draw (4) to [out=-80,in=80] (5); 
        \draw (1) to (3) to (5); 
        \draw (2) to (3) to (4); 
        \draw (0,\r)--(\r,0)--(0,-\r)--(-\r,0)--(0,\r);
      \end{scope}
      \def\r{.13}
      \def\R{2.04}
      \draw[GreenLine,thin]  (\R-\r,\r)-- (\R-1.8*\r,0) -- (\R-\r,-\r);
      \node at (-.7,0) [color=blue] {$abcd$};
      \node at (.6,0) [color=blue] {$badc$};
      \node at (0,.7) [color=blue] {$bacd$};
      \node at (0,-.7) [color=blue] {$abdc$};
      \node at (1.5,0) [color=blue] {$bdac$};
      \foreach \x in {1,2,...,6} {\fill (\x) circle (.5mm);}
      \foreach \x in {3,4,5} {\fill[color=white] (\x) circle (.25mm);}
      \newcommand{\placedots}{
        \filldraw[YellowPoly] (0,0) circle (.4cm);
        \foreach \x in {1,2,3,4,5} {\coordinate (d\x) at (90-72*\x:.25);}}
      \newcommand{\drawdots}{\foreach \x in {1,2,3,4,5} {\draw (d\x) node [smallDot] {};}}
      \def\r{1.5}
      \node (n1) at (\r,\r) {};
      \node (n2) at (\r,-\r) {};
      \node (n3) at (2.7,0) {};
      \node (n4) at (-\r,\r) {};
      \node (n5) at (-\r,-\r) {};
      \node (n6) at (-2,0) {};
      \begin{scope}[shift={(n1)}] \placedots 
        \filldraw[BluePoly] (d3)--(d4)--(d5)--cycle; 
        \draw[BlueLine]  (d1)--(d2)--(d5);  \drawdots 
      \end{scope}
      \begin{scope}[shift={(n2)}] \placedots 
        \filldraw[BluePoly] (d1)--(d2)--(d5)--cycle; 
        \draw[BlueLine]  (d4)--(d3)--(d5);  \drawdots 
      \end{scope}
      \begin{scope}[shift={(n3)}] \placedots 
        \filldraw[BluePoly] (d3)--(d4)--(d5)--cycle; 
        \draw[BluePoly]  (d1)--(d2)--(d5)--cycle;  \drawdots 
      \end{scope}
      \begin{scope}[shift={(n4)}] \placedots 
        \filldraw[BluePoly] (d2)--(d3)--(d4)--(d5)--cycle; 
        \draw[BlueLine]  (d1)--(d2);  \drawdots 
      \end{scope}
      \begin{scope}[shift={(n5)}] \placedots 
        \filldraw[BluePoly] (d1)--(d2)--(d3)--(d5)--cycle; 
        \draw[BlueLine]  (d3)--(d4);  \drawdots 
      \end{scope}
      \begin{scope}[shift={(n6)}] \placedots 
        \filldraw[BluePoly] (d2)--(d3)--(d5)--cycle; 
        \draw[BlueLine]  (d1)--(d2) (d3)--(d4);  \drawdots 
      \end{scope}
    \end{scope}
    \end{tikzpicture}
  \caption{The hyperedge poset of the tree $\tau$ shown in
    Figure~\ref{fig:union-A} has $5$ linear extensions, or
    equivalently there are $5$ proper orderings of $\tau$: $abcd$,
    $abdc$, $bacd$, $badc$ and $bdac$.  Each of these $5$ labels a
    spherical trianlge in the noncrossing partition link $\ncparts_5$
    with angles $\frac{\pi}{2}$, $\frac{\pi}{3}$ and $\frac{\pi}{3}$.
    These $5$ triangles fit together as shown to form a large
    spherical triangle with angle $\frac{2\pi}{3}$, $\frac{2\pi}{3}$
    and $\frac{\pi}{2}$.  The labels are the noncrossing hypertree
    labels of the $3$ vertices and $3$ sides of the large
    simplex.\label{fig:union-B}}
\end{figure}

\renewcommand{\themain}{\ref{main:geometry}}
\begin{main}[Topology and Geometry]
  The noncrossing hypertree complex is naturally homeomorphic to the
  noncrossing partition link. As a consequence, the pieceswise
  spherical metric on the latter induces a piecewise spherical metric
  on the former.
\end{main}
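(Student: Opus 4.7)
The plan is to build an explicit map $\Phi\colon C \to L$ simplex by simplex, using noncrossing hypertrees as the common labeling set, and then verify that $\Phi$ is a simplicial homeomorphism (after the subdivision on the $L$ side is collapsed). On vertices, $\Phi$ is already specified by Definition~\ref{def:vertices}: each basic noncrossing hypertree $\tau$ labels a unique vertex of $C$ and, because its two hyperedges admit a unique proper ordering, a unique vertex of $L$. Extend this as follows: for every simplex of $C$, labeled by a noncrossing hypertree $\tau$ with $k$ hyperedges, define $\Phi$ on that simplex to be the identification with $\simplex(\tau)\subseteq L$ provided by Lemma~\ref{lem:po-simplices}, which tells us that $\simplex(\tau)$ is a closed spherical $(k-2)$-simplex sitting inside $\sphere(\tau)$.

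The first thing to check is that $\Phi$ is compatible with face relations on both sides. On the $C$ side the face poset is just the dual of $\nchtrees_{n+1}$: faces of the simplex labeled $\tau$ correspond to noncrossing hypertrees $\tau'$ obtained by merging some subset of adjacent hyperedges, as codified by Proposition~\ref{prop:he-merge}. On the $L$ side Remark~\ref{rem:interior-facet} says exactly that the boundary simplices of $\simplex(\tau)$ are the $\simplex(\tau')$ for the same hypertrees $\tau'$, so the inclusions $\simplex(\tau')\hookrightarrow \simplex(\tau)$ match the face inclusions of $C$ and $\Phi$ assembles into a continuous map.

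Next I would prove bijectivity. Surjectivity is Corollary~\ref{cor:simplices}: every simplex in $L$ has a standard name given by a weakly properly ordered noncrossing hypertree, and the underlying noncrossing hypertree $\tau$ places that simplex inside $\simplex(\tau)$ by Lemma~\ref{lem:po-spheres} combined with Remark~\ref{rem:interior-facet}. For injectivity of $\Phi$ on open simplices, the key point is that the noncrossing hypertree $\tau$ underlying the standard name of any simplex of $L$ is determined by that simplex alone, so distinct open simplices of $C$ map to disjoint unions of open simplices of $L$; consequently the open simplices $\simplex(\tau)^\circ$ partition $L$ and $\Phi$ restricts to a bijection between open simplices of $C$ and these strata. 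Continuity of $\Phi^{-1}$ follows from the fact that $\Phi$ is a quotient map that is already a homeomorphism on each closed cell, so the piecewise spherical metric on $L$ descends along the identifications used to build $\simplex(\tau)$ out of partition chambers.

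The main obstacle, and the real content, is Lemma~\ref{lem:po-simplices}: one must know that amalgamating the partition chambers associated with the proper orderings of a fixed noncrossing hypertree $\tau$ really does produce a single closed spherical simplex rather than an irregularly shaped union of spherical pieces. That in turn depends on Proposition~\ref{prop:intersections} together with the tree structure of the Hasse diagram of $\poset(\tau)$ guaranteed by Proposition~\ref{prop:edge-posets}. Once these geometric shape statements are in hand the remaining verifications---face compatibility, bijectivity via standard names, and local Euclidean structure---are essentially bookkeeping, and the induced piecewise spherical metric on $C$ is simply the pushforward metric, with the noncrossing hypertree complex serving as a coarser simplicial subdivision of the same underlying topological space as $L$.
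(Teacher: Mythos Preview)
Your proposal is correct and follows essentially the same approach as the paper: define $\Phi$ on vertices via Definition~\ref{def:vertices}, send the simplex of $C$ labeled $\tau$ to the subdivided simplex $\simplex(\tau)\subset L$ supplied by Lemma~\ref{lem:po-simplices}, and invoke Remark~\ref{rem:interior-facet} for face compatibility. You are somewhat more explicit than the paper about the bijectivity argument (via standard names and Corollary~\ref{cor:simplices}) and about why Lemma~\ref{lem:po-simplices} is the crux, but the overall strategy and the lemmas invoked are the same.
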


\begin{proof}
  The map between the noncrossing hypertree complex and the
  noncrossing partition link begins with the vertices
  (Definition~\ref{def:vertices}) and then it proceeds up through the
  skeleta.  When it is time to send the simplex of $C$ labeled by the
  noncrossing hypertree $\tau$ to $L$, the map on its boundary has
  already been defined and it is sent to the boundary of
  $\simplex(\tau)$ inside $L$ (Lemma~\ref{lem:po-simplices} and
  Remark~\ref{rem:interior-facet}).  Simply extend this map to the
  interior of the simplex labeled $\tau$ in $C$ to the interior of the
  simplex shaped subcomplex $\simplex(\tau)$ in $L$.  By construction
  the completed map is a continuous bijection and thus a
  homeomorphism.
\end{proof}

One way to restate this result is that the noncrossing hypertree
complex is a simplicial coarsening of the noncrossing partition link.
This is similar in spirit to the polyhedral coarsening discussed by
Nathan Reading in \cite{Re12}.  In order to distinguish between the
two simplicial structures, when both are under discussion, the
simplices in $L$ are callled \emph{partition simplices} and the
simplices in $C$ are called \emph{tree simplices}.  For example, if
$\tau$ is a noncrossing hypertree, then $\simplex(\tau)$ is a single
tree simplex but it is a union of partition simplices.  The relative
efficiency of $C$ as a simplicial structure over that of $L$ is clear
once we compare the number of vertices and chambers in each.

\begin{rem}[Relative sizes]
  The noncrossing partition link $L = \link(\ncparts_{n+1})$ has
  $\catalan_{n+1} = \frac{1}{n+2} \binom{2n+2}{n+1}$ many vertices and
  $(n+1)^{n-1}$ many partition chambers.  By contrast there are only
  $(n+1)(n-1)$ many vertices and $\catalan^{(2)}(A_n) = \frac{1}{2n+1}
  \binom{3n}{n}$ many tree chambers in the noncrossing hypertree
  complex $C = \complex(\nchtrees_{n+1})$.  In other words,
  exponentially many vertices and a superexponential number of
  chambers are amalgamated into a quadratic number of vertices and a
  mere exponential number of chambers.  See Table~\ref{tbl:ncht-ncpl}.
\end{rem}

\begin{table}
  \begin{center}
    \begin{tabular}{ccc}
      $\begin{array}{|r|r|r|r|}
        \hline
        n & d & \card{V} & \textrm{chambers}\\
        \hline
        \hline
        3 & 1 & 8 & 12\\
        4 & 2 & 15 & 55\\
        5 & 3 & 24 & 273\\
        6 & 4 & 35 & 1,428\\
        7 & 5 & 48 & 7,752\\
        8 & 6 & 63 & 43,263\\
        9 & 7 & 80 & 246,675\\
        \hline
      \end{array}$
      &
      \hspace*{1em}
      &
      $\begin{array}{|r|r|r|r|}
        \hline
        n & d & \card{V} & \textrm{chambers}\\
        \hline
        \hline
        3 & 1 & 12 & 16\\
        4 & 2 & 40 & 125\\
        5 & 3 & 130 & 1,296\\
        6 & 4 & 427 & 16,807\\
        7 & 5 & 1,428 & 262,144\\
        8 & 6 & 4,860 & 4,782,969\\
        9 & 7 & 16,794 & 100,000,000\\
        \hline
      \end{array}$
    \end{tabular}
  \end{center}
  \caption{The number of vertices and chambers for the noncrossing
    hypertree complex $\complex(\nchtrees_{n+1})$ (left) and the
    noncrossing partition link $\link(\ncparts_{n+1})$ (right) in low
    dimensions.\label{tbl:ncht-ncpl}}
\end{table}

\section{Theorem~\ref{main:duality}: Simplices and Spheres\label{sec:duality}}

This section completes the proof the Theorem~\ref{main:duality}, but
since each tree simplex is labeled by a noncrossing hypertree $\tau$
and a sphere containing it inside the noncrossing partition link has
already been identified for every noncrossing hypertree $\tau$, it
suffices to show that $\sphere(\tau)$ is a union of tree simplices and
thus a subcomplex in the new cell structure.  This follows from an
understanding of the process of standardizing the names of simplices
labeled by ordered hypertrees.

\begin{lem}[Standard names]\label{lem:standard-names}
  Let $\tau$ be an ordered hypertree with hyperedge permutations
  $\sigma_1, \sigma_2, \cdots, \sigma_k$ where the subscripts
  indicates the ordering.  If $\tau'$ is the standard name of the
  simplex corresponding to $\tau$ then $\tau'$ is the properly ordered
  noncrossing hypertree $\tau'$ with the same number of hyperedges
  permutations $\sigma'_1, \sigma'_2, \ldots, \sigma'_k$ and the same
  multiset of hyperedge sizes.  Moreover, for any permutation $\pi$ of
  $k$ elements such that $\sigma_{\pi(1)} \sigma_{\pi(2)} \cdots
  \sigma_{\pi(k)}$ is a proper ordering of the hyperedge permutations
  of $\tau$, the permutations $\sigma'_j$ in the standard name satisfy
  the equations $\sigma_j \beta_j = \beta_j \sigma'_j$ where $\beta_j$
  is the product of the permutations $\sigma_i$ with $i<j$ that occur
  to the right of $\sigma_j$ in the chosen proper ordering of $\tau$
  in the order they occur.  
\end{lem}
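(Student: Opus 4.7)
The plan is to work directly with the chain of partial products in $\ncparts_{n+1}$ that determines the simplex. Write $p_j$ for the noncrossing permutation of the noncrossing hyperforest $\tau_j$ consisting of the first $j$ hyperedges of $\tau$ in its given order, so the chain is $p_0 = \id < p_1 < \cdots < p_k$ and $\sigma'_j = p_{j-1}^{-1} p_j$ by definition. The first auxiliary observation is a restriction principle: if $\pi$ is a proper ordering of a noncrossing hyperforest and $E' \subseteq E$ is any subset of its hyperedges, then the ordering that $\pi$ induces on $E'$ is again proper. This holds because every covering relation of the restricted hyperedge poset corresponds to a chain of covering relations in $\poset(\tau)$ through the intermediate hyperedges of $E \setminus E'$, so any linear extension of $\poset(\tau)$ restricts to a linear extension of the restricted poset.

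Applying this principle to the given proper ordering $\pi$ of $\tau$ and the subsets $E_j = \{e_1, \ldots, e_j\}$, each induced ordering is proper for $\tau_j$, so by Lemma~\ref{lem:po-nc} the product of the corresponding hyperedge permutations in that order equals $p_j$. Let $l^\ast$ be the position of $\sigma_j$ in $\pi$ and split the factors of this restricted product for $E_j$ into $A$ (the factors at positions $l < l^\ast$, in $\pi$-order) and $B$ (the factors at positions $l > l^\ast$, in $\pi$-order). Inserting $\sigma_j$ at position $l^\ast$ is the only change in passing from $E_{j-1}$ to $E_j$, so $p_{j-1} = AB$ and $p_j = A\sigma_j B$, whence
$$\sigma'_j \;=\; p_{j-1}^{-1} p_j \;=\; B^{-1} A^{-1} A \sigma_j B \;=\; B^{-1} \sigma_j B \;=\; \beta_j^{-1} \sigma_j \beta_j,$$
which rearranges to $\sigma_j \beta_j = \beta_j \sigma'_j$ once $B$ is recognized as exactly $\beta_j$ from its definition.

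The rest is immediate. Conjugation preserves cycle type, so each $\sigma'_j$ is a single cycle of the same length as $\sigma_j$, and the multiset of hyperedge sizes is preserved (index by index, in fact). The rank of $p_j$ in $\ncparts_{n+1}$ exceeds that of $p_{j-1}$ by $|e_j|-1 \geq 1$, so the chain is strict with $k$ increments; hence by Corollary~\ref{cor:simplices} the standard name $\tau'$ is a properly ordered noncrossing hypertree with $k$ hyperedge permutations $\sigma'_1, \sigma'_2, \ldots, \sigma'_k$. The main obstacle is the restriction principle, where one must verify that the partial order on hyperedges behaves well under removal of arbitrary subsets; with that in hand the conjugation formula follows by direct bookkeeping of which factors of the $\pi$-product sit to the left and right of $\sigma_j$.
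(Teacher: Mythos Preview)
Your proof is correct and follows essentially the same route as the paper's: both compute $\tau_j = \alpha_j \sigma_j \beta_j$ (your $p_j = A\sigma_j B$) by restricting the chosen proper ordering $\pi$ to the first $j$ hyperedges and invoking Lemma~\ref{lem:po-nc}, then read off $\sigma'_j$ as the conjugate $\beta_j^{-1}\sigma_j\beta_j$ and use irreducibility of the conjugate to upgrade the weak proper ordering to a proper one. The only cosmetic difference is that you isolate the ``restriction principle'' explicitly, whereas the paper obtains the same fact by first invoking Proposition~\ref{prop:rewriting} to pull the first $j$ factors to the front and then noting their relative order is proper for the subhyperforest.
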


\begin{proof}
  Let $e_i$ be the hyperedge of $\tau$ corresponding to the hyperedge
  permutation $\sigma_i$ and for each $j$ define three permutations.
  Let $\tau_j$, $\alpha_j$ and $\beta_j$ be the noncrossing
  permutations of the noncrossing hyperforests whose hyperedges are
  (1) those $e_i$ with $i \leq j$, (2) those $e_i$ with $i<j$ and
  $\sigma_i$ to the left of $\sigma_j$ in the chosen proper ordering
  of $\tau$ and (3) those $e_i$ with $i<j$ and $\sigma_i$ to the right
  of $\sigma_j$ in the chosen proper ordering of $\tau$, respectively.
  Because the order is proper, the product of the $\sigma_i$'s in this
  order is a reduced factorization of the Coxeter element $c = \tau_k$
  (Lemma~\ref{lem:po-nc}).  Next, use Proposition~\ref{prop:rewriting}
  to find a new reduced factorization whose first $j$ factors, as a
  set, are the permutations $\sigma_i$ with $i \leq j$.  Since the
  order in which they occur is a proper ordering of the subhyperforest
  with only these $j$ hyperedges, the product of these $j$
  permutations in this order is $\tau_j$.  Similarly, the product of
  the permutations that occur before and after $\sigma_j$ in this
  ordering are $\alpha_j$ and $\beta_j$, respectively.  In other words
  $\tau_j = \alpha_j \sigma_j \beta_j$ and by
  Proposition~\ref{prop:rp} this is a reduced factorization of
  $\tau_j$.  If $\sigma'_j$ is defined as the unique permutation that
  solves the equation $\sigma_j \beta_j = \beta_j \sigma'_j$, then
  $\tau_j = \alpha_j \beta_j \sigma'_j$ and since $\alpha_j \beta_j$
  is the properly ordered product of all $\sigma_i$ with $i<j$, its is
  equal to $\tau_{j-1}$.  Thus $\tau_j = \tau_{j-1} \sigma'_j$.  By
  Proposition~\ref{prop:rp-chains} these labels $\sigma'_j$ are the
  unique labels on the chain $t = \tau_0 < \tau_1 < \cdots < \tau_k=c$
  and the corresponding hyperedges form a weakly properly ordered
  noncrossing hypertree $\tau'$.  But since conjugation does not
  change the cycle type of a permutation, $\sigma_j$ irreducible
  implies that $\sigma'_j$ is also irreducible.  As a consequence the
  ordering on the hyperedges on $\tau'$ is a proper order and not
  merely weakly proper order.
\end{proof}

Since the new hyperedge permutations $\sigma'_j$ are independent of
the chosen proper ordering of $\tau$, different proper orderings can
be chosen for each $j$ to minimize the number of conjugations
required.  For example it is sufficient to focus on inversions in the
hypertree poset.

\begin{defn}[Inversions in a partial order]
  Let $P$ be a finite poset with $k$ elements $e_1, e_2, \ldots, e_k$
  where the subscripts indictate a linear ordering of the elements of
  $P$.  If $i<j$ as integers but $e_i > e_j$ in the poset ordering of
  $P$ then \emph{$e_i$ inverts $e_j$}.
\end{defn}

\begin{rem}[Fewer conjugations]\label{rem:fewer-conj}
  Let $\tau$ be an ordered noncrossing hypertree with hyperedges $e_1,
  e_2, \ldots e_k$ and hyperedge permutations $\sigma_1, \sigma_2,
  \ldots \sigma_k$ where the subscripts indicate the linear order.
  For each $j$ there is a proper ordering of $\tau$ where the only
  hyperedges that occur after $e_j$ are those that are above $e_j$ in
  the hyperedge poset of $\tau$.  By choosing this proper order in
  Lemma~\ref{lem:standard-names}, it is clear that the hyperedge
  permutation $\sigma'_j$ in the standardized hypertree $\tau'$
  satisfies the equation $\sigma_j \beta_j = \beta_j \sigma'_j$ where
  $\beta_j$ is a product of the hyperedge permutations of the
  hyperedges that invert $e_j$ in $\poset(\tau)$ in any order that is
  proper for the noncrossing hyperforest that they form.
\end{rem}

The number of necessary conjugations can be further reduced but this
is sufficient for our purposes.  

\begin{exmp}[Standard names]\label{ex:standard}
  Let $e_1 = \{3,4,5\}$, $e_2 = \{5,6\}$, $e_3 = \{2,3\}$, $e_4 =
  \{1,3\}$ and $e_5 = \{5,7,8,9\}$ be a linear ordering of the $5$
  hyperedges of the noncrossing tree shown in on the righthand side of
  Figure~\ref{fig:ncht}.  As noted in Definition~\ref{def:edge-posets}
  the unique proper ordering of these $5$ hyperedges is $e_2 < e_5 <
  e_1 < e_4 < e_3$.  The hyperedge $e_3$ is not inverted by any other
  hyperedge, so $\sigma'_3 = \sigma_3 = (2,3)$.  On the other hand,
  $e_5$ is inverted by $e_1$, $e_4$ and $e_3$ so $\beta_5 = \sigma_1
  \sigma_4 \sigma_3 = (3,4,5)(1,3)(2,3) = (1,2,3,4,5)$ and $\sigma'_5$
  is $(1,7,8,9)$ because $(5,7,8,9)(1,2,3,4,5) = (1,2,3,4,5,7,8,9) =
  (1,2,3,4,5)(1,7,8,9)$.  In Figure~\ref{fig:standard} the original
  ordered noncrossing hypertree $\tau$ is shown on the left and the
  properly ordered noncrossing hypertree $\tau'$ to which it
  corresponds is shown on the right.
\end{exmp}

\begin{figure}
  \begin{tikzpicture}[scale=1]
    \begin{scope}[xshift=-3cm]
      \foreach \x in {1,2,...,9} {\coordinate (\x) at (90-40*\x:1.5cm);}
      \filldraw[YellowPoly] (0,0) circle (2.5cm);
      \draw[BlueLine] (2)--(3)--(1) (5)--(6);
      \draw[BluePoly] (3)--(4)--(5)--cycle;
      \draw[BluePoly] (5)--(7)--(8)--(9)--cycle;
      \draw[BlueLine] (0,-1.6) node {\small $1$};
      \draw[BlueLine] (-1,-1.3) node {\small $2$};
      \draw[BlueLine] (1.6,-.3) node {\small $3$};
      \draw[BlueLine] (.8,.3) node {\small $4$};
      \draw[BlueLine] (-.7,.3) node {\small $5$};
      \foreach \x in {1,2,...,9} {\fill (\x) circle (.7mm);}
      \foreach \x in {1,2,...,9} {\draw (90-40*\x:2cm) node {\small
          \x};}
    \end{scope}
    \begin{scope}[xshift=3cm]
      \foreach \x in {1,2,...,9} {\coordinate (\x) at (90-40*\x:1.5cm);}
      \filldraw[YellowPoly] (0,0) circle (2.5cm);
      \draw[BlueLine] (3) (1)--(2)--(3)--(6);
      \draw[BluePoly] (3)--(4)--(5)--cycle;
      \draw[BluePoly] (1)--(7)--(8)--(9)--cycle;
      \draw[BlueLine] (0,-1.6) node {\small $1$};
      \draw[BlueLine] (0,-.5) node {\small $2$};
      \draw[BlueLine] (1.6,-.3) node {\small $3$};
      \draw[BlueLine] (1,.6) node {\small $4$};
      \draw[BlueLine] (-.3,1) node {\small $5$};
      \foreach \x in {1,2,...,9} {\fill (\x) circle (.7mm);}
      \foreach \x in {1,2,...,9} {\draw (90-40*\x:2cm) node {\small \x};}
    \end{scope}
    \end{tikzpicture}
  \caption{The ordered noncrossing hypertree $\tau$ on the left
    corresponds to the properly ordered noncrossing hypertree $\tau'$
    on the right.\label{fig:standard}}
\end{figure}
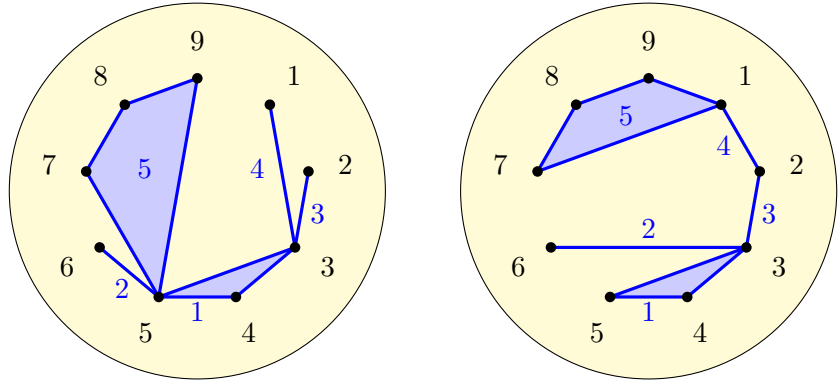

\begin{lem}[Facets and proper orderings]\label{lem:facets}
  If a simplex $S$ labeled by a properly ordered noncrossing hypertree
  $\tau$ has a facet $F$ labeled by a weak proper ordering of $\tau$,
  then this facet is a facet of exactly one other simplex $S'$ with a
  properly ordered noncrossing tree as its label.  Moreover, the label
  on $S'$ is another proper ordering of $\tau$ and the new ordering
  differs from the old by a single transposition of the order on a
  pair of hyperedges adjacent in the original ordering and
  incomparable in the hyperedge poset.
\end{lem}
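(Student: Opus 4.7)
The plan is to read both the existence of $S'$ and its uniqueness off the combinatorial effect of deleting a single intermediate element from the chain in $\ncparts_{n+1}$ that labels $S$. First I would fix notation: write the proper ordering of $\tau$ as $\pi$, let $\sigma_{\pi(i)}$ be the irreducible permutation of the $i$-th hyperedge of $\tau$ under $\pi$, and set $\tau_i := \sigma_{\pi(1)}\sigma_{\pi(2)}\cdots\sigma_{\pi(i)}$, so that $S$ is labeled by the chain $t = \tau_0 < \tau_1 < \cdots < \tau_k = c$. Any facet $F$ of $S$ is obtained by omitting exactly one intermediate $\tau_j$ and amalgamating the two consecutive labels $\sigma_{\pi(j)},\sigma_{\pi(j+1)}$ into the single label $\sigma_{\pi(j)}\sigma_{\pi(j+1)}$.

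Next I would extract structural information from the hypothesis that the standard name of $F$ is a weak proper ordering of the \emph{same} $\tau$, rather than of a smaller hypertree. If $\sigma_{\pi(j)}\sigma_{\pi(j+1)}$ were irreducible then the standard name would contain a single merged hyperedge at rank $j$, reducing the hyperedge count of $\tau$; so this product must be reducible. Lemma~\ref{lem:two-ips}, combined with the fact that the hyperedges of $\tau$ are pairwise weakly noncrossing (hence share at most one vertex), then forces $e_{\pi(j)}$ and $e_{\pi(j+1)}$ to be disjoint. I would then use Proposition~\ref{prop:edge-posets} to conclude that these two hyperedges are incomparable in $\poset(\tau)$: disjoint hyperedges cannot form a covering relation, while any longer monotone chain between them in $\poset(\tau)$ would demand an intermediate hyperedge $f$ lying strictly between them in the proper ordering $\pi$, contradicting the adjacency of positions $j$ and $j+1$.

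Constructing $S'$ is then immediate: swap positions $j$ and $j+1$ in $\pi$ to obtain a new linear extension $\pi'$ of $\poset(\tau)$, and take $S'$ to be the simplex whose standard name is $(\tau,\pi')$. Since disjoint irreducible permutations commute, only the $j$-th partial product changes under the swap, so both $S$ and $S'$ yield the same chain $F$ once $\tau_j$ is deleted. The new ordering $\pi'$ differs from $\pi$ by a single transposition of a pair of adjacent, incomparable hyperedges, as required by the second assertion.

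The main obstacle is uniqueness. Suppose $S''$ is any simplex containing $F$ as a facet whose label is a properly ordered noncrossing hypertree, and let $\tau_j^{\ast}$ denote the intermediate element it inserts into the chain of $F$ at position $j$, so $\tau_{j-1} < \tau_j^{\ast} < \tau_{j+1}$. Since $\tau_{j-1}^{-1}\tau_{j+1} = \sigma_{\pi(j)}\sigma_{\pi(j+1)}$ is the product of two commuting irreducible noncrossing permutations on disjoint supports, the standard factorization of intervals in noncrossing partition lattices identifies $[\tau_{j-1},\tau_{j+1}]$ with the product lattice $\ncparts_{s_j} \times \ncparts_{s_{j+1}}$, where $s_j$ and $s_{j+1}$ are the sizes of the two hyperedges. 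Writing $\tau_j^{\ast}$ as a pair $(x_1,x_2)$ in this product, the two new labels $\sigma_j^{\ast}$ and $\sigma_{j+1}^{\ast}$ correspond to $(x_1,x_2)$ and its complement $(\bar x_1,\bar x_2)$. For both labels to be irreducible permutations of $\sym_{n+1}$, each pair must have exactly one trivial coordinate with the other coordinate a single cycle; since complementation exchanges the bottom and top of each factor, this forces $(x_1,x_2)$ to equal either $(\max,\mathrm{id})$ or $(\mathrm{id},\max)$. These two options give precisely $S$ and $S'$, completing the proof and simultaneously confirming that $S'$ is indeed labeled by a proper ordering of the same $\tau$.
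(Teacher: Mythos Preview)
Your argument is correct and follows the same route as the paper: identify the facet as coming from multiplying two adjacent labels, observe that ``weak proper ordering of the \emph{same} $\tau$'' forces those two hyperedges to be disjoint (hence incomparable in $\poset(\tau)$, by the adjacency-plus-linear-extension argument you give, which is exactly Lemma~\ref{lem:po-com}), and then analyze the ways to refactor the resulting two-block label. Your uniqueness argument via the product decomposition $[\tau_{j-1},\tau_{j+1}] \cong \ncparts_{s_j}\times\ncparts_{s_{j+1}}$ is a more explicit version of the paper's one-line ``there are only two such factorings.''

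One small step you state without justification: you assume the new element $\tau_j^{\ast}$ is inserted between $\tau_{j-1}$ and $\tau_{j+1}$. You should say why: every other label of $F$ is already irreducible, while the label $\sigma_{\pi(j)}\sigma_{\pi(j+1)}$ at position $j$ is not; inserting elsewhere would leave that reducible label intact in $S''$, contradicting that $S''$ is labeled by a \emph{properly} ordered noncrossing hypertree. The paper makes this explicit (``the only way\ldots is if the recently formed noncrossing permutation with two nontrivial blocks is the one that is factored''), and you should too.
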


\begin{proof}
  Recall that when passing to a facet a pair of adjacent hyperedge
  permutations are multipled.  In order for the result to be a weak
  proper ordering (as opposed to an extremely weak one), the
  corresponding hyperedges involved must be disjoint and the product
  of the two permutations is a noncrossing permutation with exactly
  two nontrivial blocks.  When undoing this process, one noncrossing
  permutation label is split/factored.  The only way that the factored
  labeling can correspond to a proper ordering of a hypertree is if
  the recently formed noncrossing permutation with two nontrivial
  blocks is the one that is factored and the factoring splits it into
  two irreducible permutations.  There are only two such factorings.
  One returns to the ordering of $\tau$ and the other to a different
  ordering of $\tau$.
\end{proof}

From this it quickly follows that the simplicial spheres already
identified in the noncrossing partition link remain simplicial spheres
in the new cell structure.

\begin{lem}[Union of tree simplices]\label{lem:union-tree-simplices}
  Let $\tau$ and $\tau'$ be noncrossing hypertrees with the same
  number of hyperedges.  If one proper ordering of $\tau'$ labels a
  partition simplex in $\sphere(\tau)$, then every proper ordering of
  $\tau'$ describes a partition simplex in $\sphere(\tau)$.  As a
  consequence, the simplicial sphere $\simplex(\tau)$ with its $k!$
  partition simplices can be viewed instead as a union of tree
  simplices.
\end{lem}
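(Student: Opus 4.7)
The plan is to use Lemma~\ref{lem:facets} together with the fact that every facet of a top-dimensional simplex of $\sphere(\tau)$ is shared by exactly two top-dimensional simplices of $\sphere(\tau)$. First I reduce to the case of two proper orderings $\pi$ and $\pi'$ of $\tau'$ that differ by a single transposition of adjacent hyperedges at positions $i$ and $i+1$ that are incomparable in the hyperedge poset $\poset(\tau')$. This reduction invokes the standard fact that any two linear extensions of a finite poset are connected by a sequence of such incomparable-pair transpositions, applied here to $\poset(\tau')$.

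For the base case, let $S$ and $S'$ be the top-dimensional partition simplices of $L$ with standard names $\pi$ and $\pi'$. Because any two hyperedges of a noncrossing hyperforest that share a vertex are comparable in the hyperedge poset, incomparability of the swapped hyperedges $e'_i$ and $e'_{i+1}$ forces them to be disjoint. Hence the product of their hyperedge permutations is a noncrossing permutation with two nontrivial blocks, and the common facet $F$ of $S$ and $S'$ obtained by merging the labels at positions $i$ and $i+1$ is labeled by a weak proper ordering of $\tau'$ in the sense required by Lemma~\ref{lem:facets}. That lemma then identifies $S'$ as the unique top-dimensional simplex of $L$ other than $S$ whose boundary contains $F$ and whose standard name is a proper ordering of a noncrossing hypertree.

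Now assume $S \in \sphere(\tau)$, so $F \in \sphere(\tau)$ as well. Since $\sphere(\tau)$ is the spherical subcomplex coming from the Boolean sublattice $B_\tau \cong \bool_k$, the chain in $B_\tau$ corresponding to $F$ has exactly one missing intermediate rank, and that rank can be filled by an element of $B_\tau$ in exactly two ways; this gives precisely two top-dimensional simplices of $\sphere(\tau)$ containing $F$. Both such simplices are labeled by orderings of the hyperedges of $\tau$, so by Lemma~\ref{lem:standard-names} their standard names are properly ordered noncrossing hypertrees. The uniqueness portion of Lemma~\ref{lem:facets} then forces the second of these two simplices to coincide with $S'$, so $S' \in \sphere(\tau)$. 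Iterating along a path of incomparable-pair transpositions proves the first assertion. The consequence follows because the $k!$ top-dimensional simplices of $\sphere(\tau)$ partition, according to standard-name hypertree, into classes each consisting of all proper orderings of a fixed $\tau'$, and by Lemma~\ref{lem:po-simplices} each such class reassembles exactly into the tree simplex $\simplex(\tau')$.

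The hardest step is reconciling the two descriptions of the ``other'' extension of $F$: one coming from the Boolean sublattice side of $\sphere(\tau)$ (where extensions are parametrized by which missing atom of $B_\tau$ to insert) and one from the standard-name side (where extensions come from factoring the merged noncrossing permutation labeling $F$ into irreducibles). The disjointness of $e'_i$ and $e'_{i+1}$ forced by incomparability is exactly what makes $F$ labeled by a \emph{weak} proper ordering of $\tau'$, so that Lemma~\ref{lem:facets} applies; without this hypothesis the merged label could either collapse into a single irreducible (so $F$ would drop in dimension for $\tau'$) or admit too many factorizations, and the crucial uniqueness would fail.
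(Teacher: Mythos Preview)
Your proof is correct and follows essentially the same approach as the paper's: both arguments reduce to showing that if $S$ and $S'$ are facet-adjacent top-dimensional partition simplices of $\simplex(\tau')$ with $S\in\sphere(\tau)$, then the unique second top-dimensional simplex of $\sphere(\tau)$ across that facet (guaranteed by the manifold/Boolean-lattice structure) must coincide with $S'$ via Lemma~\ref{lem:standard-names} and the uniqueness clause of Lemma~\ref{lem:facets}. You have simply made explicit several steps the paper leaves implicit, notably the reduction to adjacent incomparable-pair swaps and the reason such hyperedges are disjoint.
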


\begin{proof}
  Consider two top-dimensional partition simplices $S$ and $S'$ in
  $\simplex(\tau')$ that share a facet $F = S \cap S'$ and assume that
  $S$ is a partition simplex in $\sphere(\tau)$.  Note that $S$ is
  necessarily top-dimensional in $\sphere(\tau)$ because of the
  equality of the number of hyperedges.  Since spheres are manifolds,
  there is a unique simplex $S''$ in $\sphere(\tau)$ that shares the
  facet $F$ and with $S$, by Lemma~\ref{lem:standard-names} it is
  labeled by a properly ordered noncrossing hypertree and by
  Lemma~\ref{lem:facets} $S'$ and $S''$ must be one and the same.
  Finally, since the union of the simplices labeled by all of the
  proper orderings of $\tau'$ form a spherical simplex, this type of
  facet-sharing adjacency can be used to conclude that all of the
  partition simplices in $\simplex(\tau')$ lie in $\sphere(\tau)$.
\end{proof}

Theorem~\ref{main:duality} is now immediate.

\renewcommand{\themain}{\ref{main:duality}}
\begin{main}[Simplices and Spheres]
  Every spherical simplex in the metric noncrossing hypertree complex
  of any dimension is contained in a subcomplex isometric to a unit
  sphere of the same dimension.  In fact, in the top dimension (and
  conjecturally in all dimensions) there is a natural map from
  simplices to spherical subcomplexes that establishes a bijection
  between these two sets.
\end{main}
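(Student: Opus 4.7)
The plan is to assemble the preceding lemmas; by the time we reach this statement most of the work is already done. The first assertion unpacks to: for every noncrossing hypertree $\tau$ with $k$ hyperedges, the tree simplex $\simplex(\tau)$ (of dimension $k-2$) sits inside a subcomplex of $C$ that is isometric to a $(k-2)$-sphere. The obvious candidate is $\sphere(\tau)$, which was constructed in Definition~\ref{def:apartments-spheres} as a $(k-2)$-dimensional simplicial subsphere of the noncrossing partition link $L$. Lemma~\ref{lem:po-simplices} shows that $\simplex(\tau) \subseteq \sphere(\tau)$, and Lemma~\ref{lem:union-tree-simplices} shows that $\sphere(\tau)$ is actually a union of tree simplices, i.e.\ a subcomplex of $C$ in its new simplicial structure. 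Since no simplices were altered, only amalgamated, passing from $L$ to $C$ does not change the underlying metric space, so $\sphere(\tau)$ remains isometric to a unit $(k-2)$-sphere.

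For the top-dimensional bijection, I would observe that the top-dimensional tree simplices of $C$ are labeled precisely by noncrossing trees on $n+1$ vertices: indeed, Definition~\ref{def:ncht-complex} identifies tree chambers with $\nctrees_{n+1}$, equivalently with those noncrossing hypertrees having the maximal number $n$ of hyperedges. On the other hand, $\sphere(\tau)$ has top dimension $n-2$ inside $C$ exactly when $\tau$ has $n$ hyperedges, i.e.\ exactly when $\tau$ is a noncrossing tree, so the set of apartments is also bijectively indexed by $\nctrees_{n+1}$. The map $\simplex(\tau) \mapsto \sphere(\tau)$ is therefore a bijection between the set of tree chambers and the set of apartments, and it is natural in the sense that both sides carry the same label $\tau \in \nctrees_{n+1}$.

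Along the way I would record that $\simplex(\tau) \subseteq \sphere(\tau)$ actually holds as a top-dimensional tree chamber inside a top-dimensional sphere, which justifies the phrasings $\chamber(\tau) \in \apartment(\tau)$ used elsewhere in the paper. The main obstacle in this assembly was already surmounted: it is the step in Lemma~\ref{lem:union-tree-simplices} that $\sphere(\tau)$ is closed under amalgamation into tree simplices, which in turn rested on Lemma~\ref{lem:standard-names} (standardizing hyperedge permutations by conjugation) and Lemma~\ref{lem:facets} (that a properly ordered label on a top simplex and a weak proper label on one of its facets together determine the neighboring top simplex). With those in hand, the theorem itself is essentially a matter of matching labels and noting that dimensions, inclusions and bijectivity all fall into place.
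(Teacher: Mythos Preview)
Your assembly of the first assertion is correct and matches the paper's approach: the tree simplex $\simplex(\tau)$ lies in $\sphere(\tau)$ by Lemma~\ref{lem:po-simplices}, and Lemma~\ref{lem:union-tree-simplices} shows $\sphere(\tau)$ is a union of tree simplices, hence a subcomplex of $C$; the metric is unchanged under the coarsening.

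The bijection argument, however, has a gap. You correctly observe that tree chambers are indexed by noncrossing trees and that $\sphere(\tau)$ is top-dimensional precisely when $\tau$ is a noncrossing tree. But the theorem asserts a bijection between top-dimensional simplices and \emph{all} top-dimensional spherical subcomplexes of $C$, not merely those of the form $\sphere(\tau)$. Your sentence ``so the set of apartments is also bijectively indexed by $\nctrees_{n+1}$'' does not follow from what precedes it: you have only described the image of the map $\tau \mapsto \sphere(\tau)$, not shown that this image exhausts all top-dimensional metric spheres. For surjectivity the paper explicitly invokes Proposition~\ref{prop:nc-apartments}, which says that the maximal Boolean subposets of $\ncparts_{n+1}$, and hence the apartments of $\link(\ncparts_{n+1})$, are in natural bijection with noncrossing trees; this is what rules out stray top-dimensional spheres not of the form $\sphere(\tau)$. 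That proposition also handles injectivity of $\tau \mapsto \sphere(\tau)$, which your argument likewise does not address. The paper's remarks immediately following the proof underscore that the analogous surjectivity in lower dimensions is exactly the unresolved part of the conjecture, so this step cannot be skipped.
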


\begin{proof}
  The first assertion is immediate since each simplex in the
  noncrossing hypertree complex is labeled by a noncrossing hypertree
  $\tau$ and $\sphere(\tau)$ is a sphere inside the noncrossing
  partition link of the same dimension containing the simplex labeled
  $\tau$ which, by Lemma~\ref{lem:union-tree-simplices}, remains a
  simplicial sphere in the simplified cell structure.  Finally, by
  Proposition~\ref{prop:nc-apartments} the apartments labeled by
  noncrossing trees are the only metric spheres of this dimension
  inside the noncrossoing partition link and therefore the only metric
  spheres of this dimension inside the noncrossing hypertree complex.
\end{proof}

The bijection established above between spheres and simplices in the
top dimension (i.e. between tree simplices and apartments) is one that
I firmly believe extends to all dimensions, but I do not currently
have a proof of this conjecture.  To do so would involve two steps.
The first relatively easy step is showing that the map from simplices
to spheres sending $\simplex(\tau)$ to $\sphere(\tau)$ for each
noncrossing hypertree $\tau$ is injective.  The second more difficult
step would be to show that no other metric spheres exist as
subcomplexes of the metric noncrossing hypertree complex.  Both steps
are true for the small values of $n$ where explicit checks are
feasible.  Since it has not been conclusively proved that these are
the only spheres in the noncrossing hypertree complex, the known
spheres deserve a name.  For the remainder of the article, the spheres
of the form $\sphere(\tau)$ for some noncrossing hypertree $\tau$ are
called \emph{special spheres}.

\section{Theorem~\ref{main:bijections}: Bijections}

In this section Theorem~\ref{main:bijections} is proved using some
elementary automorphisms of the noncrossing hypertree complex.

\begin{rem}[Automorphisms]
  Let $\dih(k)$ denote the \emph{dihedral group} of order $2k$ that
  arises as the isometry group of a regular $k$-gon.  The
  \emph{natural automorphisms} of the poset of noncrossing hypertrees
  $\nchtrees_k$ and the noncrossing partition lattice $\ncparts_k$
  (and the corresponding simplicial automorphisms of the noncrossing
  hypertree complex $\complex(\nchtrees_k)$ and the noncrossing
  partition link $\link(\ncparts_k$) are the automorphisms that arise
  from the dihedral group $\dih(k)$ of isometries of the underlying
  $k$-gon in the plane used in their definition.  These form half of
  the full automorphism group.  The other half become visible when
  (the dual of) the noncrossing hypertree poset is identified with
  poset of dissections of a polygon with twice as many sides into
  even-sided subpolygons.  The obvious automorphisms in this context
  form a dihedral group $\dih(2k)$ with twice as many elements with
  the previous automorphisms being those that preserve the bipartite
  coloring of the vertices as black and white.  A representative new
  automorphism is the rotation by $\frac{2\pi}{2k} = \frac{\pi}{k}$.
  In the context of the noncrossing partition lattice this becomes the
  well-known Kreweras map sending each noncrossing permutation to its
  right complement.  In the context of the noncrossing hypertree
  poset, it sends each noncrossing hypertree $\tau$ to its
  ``opposite'' defined by picking a proper ordrering of $\tau$,
  reversing it, and then finding the properly ordered hypertree
  $\tau'$ that corresponds to this ordering of $\tau$.  As in the
  noncrossing partition context, this map is not an involution.  Doing
  it twice produces a hypertree $\tau''$ that is a $\frac{2\pi}{k}$
  rotation of $\tau$ as is easy to see from the polygon dissection
  viewpoint.  This is a map with many interesting properties.  They
  are not explored here but are worthy of further study.
\end{rem}

The only automorphism used below is the natural one induced by a
reflection of the underlying polygon.

\begin{lem}[Reflection of spheres]\label{lem:reflect-sphere}
  If $\refl$ is an involution on the poset of noncrossing hypertrees
  induced by a fixed reflection of a polygon in the plane used to
  define them, then for any noncrossing hypertree $\tau$,
  $\sphere(\tau)$ and $\sphere(\refl(\tau))$ are isomorphic as
  simplicial spheres and there is a natural bijection between their
  top dimensional tree simplices.
\end{lem}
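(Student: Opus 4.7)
My plan is to use the dihedral action of the polygon's isometry group on all the structures in play. The reflection $\refl$ of the $(n+1)$-gon permutes vertices, carries subsets to subsets, preserves incidence, and so induces bijections of noncrossing hypergraphs of every flavor onto themselves. These bijections preserve inclusion of hyperedges, connected components, and the grading, so $\refl$ restricts to a graded poset automorphism of each of $\nchtrees_{n+1}$, $\ncparts_{n+1}$, and $\hforests_{n+1}$.

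The first step is to promote $\refl$ to a simplicial isometry $\Phi$ of the noncrossing partition link $L = \link(\ncparts_{n+1})$. Since $\refl$ is a graded poset automorphism of $\ncparts_{n+1}$, it sends chains to chains of the same length and hence induces a simplicial map on the orthoscheme complex that is an isometry on each orthoscheme (standard $n$-orthoschemes depend only on the length and grading of the chain). It fixes the long diagonal setwise, and so taking the link produces the desired simplicial isometry $\Phi$ of $L$. Under the homeomorphism of Theorem~\ref{main:geometry}, $\Phi$ corresponds to a simplicial automorphism of the noncrossing hypertree complex $C$ sending the tree simplex labeled by any noncrossing hypertree $\rho$ to the tree simplex labeled by $\refl(\rho)$, because standard names are constructed intrinsically from the vertex-labeled chain (via Corollary~\ref{cor:simplices}) and $\refl$ commutes with that construction.

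Next I would verify that $\Phi(\sphere(\tau)) = \sphere(\refl(\tau))$. By Definition~\ref{def:apartments-spheres}, $\sphere(\tau)$ corresponds to the special Boolean sublattice of $\ncparts_{n+1}$ whose elements are the noncrossing partitions arising from subsets of the hyperedges of $\tau$ (via Proposition~\ref{prop:part-nchf}). Since $\refl$ sends the hyperedges of $\tau$ bijectively to those of $\refl(\tau)$, subsets to subsets, and the partition of a noncrossing hyperforest to the partition of its image, $\refl$ carries the special Boolean sublattice for $\tau$ to the one for $\refl(\tau)$. Therefore $\Phi$ maps $\sphere(\tau)$ bijectively and isometrically onto $\sphere(\refl(\tau))$, exhibiting the asserted simplicial (in fact isometric) isomorphism.

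For the bijection on top-dimensional tree simplices, note that by Lemma~\ref{lem:union-tree-simplices} each such simplex in $\sphere(\tau)$ is the tree simplex $\simplex(\sigma)$ for some noncrossing hypertree $\sigma$ with the same number of hyperedges as $\tau$, and $\Phi$ sends $\simplex(\sigma)$ to $\simplex(\refl(\sigma))$, which lies in $\sphere(\refl(\tau))$. The inverse bijection is induced by the same argument applied to $\refl^{-1}=\refl$. I do not expect a genuine obstacle here: once the dihedral action has been installed on $L$ and shown to respect the combinatorial construction of special spheres, the remaining content is routine compatibility, and every step reduces to the observation that every ingredient in the definition of $\sphere(\tau)$ depends only on incidences that are preserved by a polygon reflection.
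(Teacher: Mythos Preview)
Your proposal is correct and follows essentially the same approach as the paper: both arguments observe that the polygon reflection induces a graded automorphism of $\ncparts_{n+1}$ carrying the special Boolean sublattice associated to $\tau$ to the one associated to $\refl(\tau)$, and the simplicial isomorphism of spheres and the bijection on tree simplices follow immediately. You supply more detail than the paper (explicitly constructing the isometry $\Phi$ on $L$, invoking Theorem~\ref{main:geometry} and Lemma~\ref{lem:union-tree-simplices} to track tree simplices), whereas the paper compresses the argument into a single observation about special Boolean subposets, but the content is the same.
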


\begin{proof}
  This is nearly immediate since the reflection on the polygon induces
  an automorphism of the noncrossing partition lattice that sends the
  special Boolean subposet that corresponds to $\sphere(\tau)$ to the
  special Boolean subposet that corresponds to $\sphere(\refl(\tau))$.
  This is because the reflection of the noncrossing partition
  constructed from the connected components of the noncrossing
  hyperforest formed by a subset of hyperedges of $\tau$ is the the
  noncrossing partition constructed from the connected components of
  the noncrossing hyperforest formed by the corresponding subset of
  the reflected hyperedges of $\tau$.
\end{proof}

Reflections also produce a kind of duality between simplices and
special spheres.

\begin{lem}[Reflections and inclusions]\label{lem:reflect-include}
  Let $\refl$ be an involution on the set of noncrossing hypertrees
  induced by a fixed reflection of a polygon in the plane used to
  define them.  If $\tau$ and $\tau'$ are noncrossing hypertrees with
  the same number of hyperedges such that the simplex labeled $\tau'$
  belongs to the sphere labeled $\tau$, then the simplex labeled
  $\refl(\tau)$ belongs the sphere labeled $\refl(\tau')$.  In
  particular, for any fixed noncrossing hypertree $\tau$ there is a
  bijection between the tree simplices of top dimension inside
  $\sphere(\refl(\tau))$ and the special spheres containing
  $\simplex(\tau)$ as a simplex of top-dimension.
\end{lem}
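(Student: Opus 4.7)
My plan is to treat $\refl$ as a simplicial isometry of both the noncrossing partition link and the noncrossing hypertree complex and to carefully track how it transports the labelings $\simplex(\cdot)$ and $\sphere(\cdot)$. By Lemma~\ref{lem:reflect-sphere} we already know that $\refl$ carries $\sphere(\tau)$ onto $\sphere(\refl(\tau))$. The key subtlety, and the source of the main difficulty, is that $\refl$ does not simply send $\simplex(\tau')$ to $\simplex(\refl(\tau'))$: since reflecting the polygon reverses the clockwise orientation used to define local linear orderings, $\refl$ reverses the partial order on each hyperedge poset $\poset(\tau')$, and so it permutes proper orderings of $\tau'$ to proper orderings of $\refl(\tau')$ only after reversing them. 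Applying the reflection naively thus produces $\refl(\simplex(\tau'))\subset\sphere(\refl(\tau))$, which is a different statement from the desired conclusion.

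First I would unpack the hypothesis $\simplex(\tau') \subset \sphere(\tau)$ using Lemma~\ref{lem:po-spheres} and Lemma~\ref{lem:union-tree-simplices}, which identify the containment with the combinatorial condition that the partition simplices comprising $\simplex(\tau')$, labeled by proper orderings of $\tau'$, arise as the standardizations, in the sense of Lemma~\ref{lem:standard-names}, of certain orderings of the hyperedges of $\tau$. In the polygon dissection picture of Theorem~\ref{thm:ncht-poly}, both $\sphere(\tau)$ and $\simplex(\tau')$ correspond to explicit subcomplexes of the alternately colored $2(n+1)$-gon dissection, and the containment translates into a symmetric incidence condition relating the diagonals associated to $\tau$ and to $\tau'$.

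The critical second step is to show that the geometric action of $\refl$ on the polygon dissection, combined with the swap of the roles of $\tau$ and $\tau'$, leaves this incidence condition invariant. Concretely, I would argue that both the hypothesis $\simplex(\tau') \subset \sphere(\tau)$ and the conclusion $\simplex(\refl(\tau)) \subset \sphere(\refl(\tau'))$ are equivalent to the existence of a single chain in $\ncparts_{n+1}$ whose initial segment corresponds to an ordering of the hyperedges of $\tau$ and whose standardization is a proper ordering of $\tau'$; reversing the reading direction of this chain and applying $\refl$ produces the analogous chain witnessing the condition for the reflected pair. This is consistent with what happens in the smallest nontrivial case, where the two conditions become literally the same cyclic-adjacency condition on the shared vertices of the trees involved.

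The main obstacle is this symmetric-invariance verification, where one must reconcile the reversal of hyperedge posets under $\refl$ with the conjugation formulas of Lemma~\ref{lem:standard-names} and Remark~\ref{rem:fewer-conj}. Once the first claim is in hand, the ``in particular'' bijection is an immediate corollary: the map $\tau' \mapsto \refl(\tau')$ sends tree simplices in $\sphere(\refl(\tau))$ to special spheres containing $\simplex(\tau)$ by applying the first claim with $(\tau, \tau')$ replaced by $(\refl(\tau), \tau')$, and since $\refl$ is an involution the same map provides a two-sided inverse, giving the desired bijection.
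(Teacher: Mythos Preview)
Your overall strategy matches the paper's: both recognize that the reflection inverts each hyperedge permutation and dualizes $\poset(\tau)$, so that the relevant operation is ``reverse the chain and take inverses,'' and both argue that this operation, applied to the factorization data linking $\tau$ and $\tau'$, exchanges the roles of the two hypertrees. Your derivation of the ``in particular'' bijection from the main claim is also correct and is exactly how the paper uses this lemma in the proof of Theorem~\ref{main:bijections}.

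However, you have correctly identified but not closed the gap. The entire content of the lemma lies in the step you label ``the main obstacle,'' and your proposal stops at saying one must ``reconcile the reversal of hyperedge posets under $\refl$ with the conjugation formulas.'' The paper carries this out explicitly: starting from the permutation $\pi$ of $\{1,\ldots,k\}$ that records how the (improper) linear ordering of the hyperedges of $\tau$ compares to a chosen proper ordering, it relabels the inverted factors of $\refl(\tau')$ by setting $\rho_{\rev(\pi(i))} = (\sigma'_i)^{-1}$, where $\rev$ is the order-reversing permutation, and then checks that the standardization formulas of Lemma~\ref{lem:standard-names} applied to the $\rho_j$'s perform exactly the inverse conjugations to those that produced $\tau'$ from $\tau$, yielding $\refl(\tau)$. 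This explicit relabeling is what makes the ``chain reversal'' heuristic rigorous; without it, your polygon-dissection reformulation remains a promissory note, since the spheres $\sphere(\tau)$ are defined via the noncrossing partition lattice rather than directly in the dissection picture, and the asserted ``symmetric incidence condition'' is not something already established in the paper.
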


\begin{proof}
  When working with the reflected versions it is convenient to relabel
  the vertices of the underlying polygon so that the new vertex $i$ is
  the image of $i$ under the fixed reflection of the polygon. This
  makes the two situations easier to compare.  The effect that this
  has is that the vertices are proceeding in the opposite direction
  around the boundary, the new Coxeter element is $c^{-1}$, where $c$
  is the old Coxeter element.  Similarly, the hyperedge permutations
  of $\refl(\tau)$ are the inverses of the hyperedge permutations of
  $\tau$ and the poset $\poset(\refl(\tau))$ is the dual of
  $\poset(\tau)$.  More precisely, because of the vertex relabeling,
  the hyperedges of $\tau$ and $\refl(\tau)$ are identical, but the
  local linear orderings are reversed.  Moreover, if $\sigma_1
  \sigma_2 \cdots \sigma_k$ is a reduced factorization of $c$ coming
  from a properly ordered noncrossing hypertree $\tau$, then
  $\sigma_k^{-1} \cdots \sigma_2^{-1} \sigma_1^{-1}$ is the reduced
  factorization of $c^{-1}$ that coresponds to a proper ordering of
  the noncrossing hypertree $\refl(\tau)$ in the relabeled polygon.
  With this as background, the main idea of the proof is that when the
  simplex labeled $\tau'$ belongs to the sphere labeled $\tau$ then
  there are two orderings of $\tau$, one proper and one improper so
  that when Lemma~\ref{lem:standard-names} is applied to the improper
  ordering on $\tau$ it leads to a proper ordering of $\tau'$.  The
  two orderings together define a permutation $\pi$ of $k$ elements,
  where $k$ is the common number of hyperedges in $\tau$ and $\tau'$.
  The subscripts of the permutations in the corresponding reduced
  factorization of $c^{-1}$ coming from this corresponding proper
  ordering of $\refl(\tau')$ can be permuted using the permutation
  $\pi$ and the global order-reversing permutation so that the process
  of standardizing this improper ordering of $\refl(\tau')$ undoes
  precisely those conjugations that produced $\tau'$ from $\tau$.
  Concretely relabel $(\sigma'_i)^{-1}$ as $\rho_j$ with $j =
  \rev(\pi(i))$ where $\rev$ is the function that globally reverses
  the ordering on the numbers $1$ through $k$.  The standard name that
  results is image of the standard name of $\tau$ in the original
  proper ordering under the map $\refl$.
\end{proof}

The following is an extended concrete example that illustrates how
this relabeling causes the conjugations to be undone.

\begin{exmp}[Reversing direction]\label{ex:reverse}
  This example continues working with the ordered hypertree $\tau$
  given in Example~\ref{ex:standard} and properly ordered hypertree
  $\tau'$ that labels the simplex to which $\tau$ corresponds.  All of
  the notation established there for the hyperedge permutations is
  carried over here.  In particular, note that this is an instance
  where the tree simplex labeled by $\tau'$ belongs to the sphere
  labeled by $\tau$.  One advantage is that both $\tau$ and $\tau'$,
  viewed as unordered hypertrees, have a unique proper ordering,
  thereby eliminating a potential source of confusion.

  The factorization of $c$ corresponding to the unique proper ordering
  of the hyperedges of $\tau$ is \[\sigma_2 \sigma_5 \sigma_1 \sigma_4
  \sigma_3 = (5,6) (5,7,8,9) (3,4,5) (1,3) (2,3).\] Thus the
  permutation $\pi$ used in Lemma~\ref{lem:standard-names} has
  $\pi(1)=2$, $\pi(2)=5$, $\pi(3)=1$, $\pi(4)=4$ and $\pi(5)=3$.  The
  standardization process leads to the reduced factorization
  \[\sigma'_1 \sigma'_2 \sigma'_3 \sigma'_4\sigma'_5 = (3,4,5) (3,6)
  (2,3) (1,2) (1,7,8,9)\] of $c$ corresponding to the unique proper
  ordering of $\tau'$.  See Figure~\ref{fig:standard}.  For the
  reflected hypertree $\refl(\tau')$, with its relabeled vertices, the
  reduced factorization of $c^{-1}$ is $(\sigma'_5)^{-1}
  (\sigma'_4)^{-1} (\sigma'_3)^{-1} (\sigma'_2)^{-1}
  (\sigma'_1)^{-1}$.  Concretely this is the reduced product
  \[ (9,8,7,1) (2,1) (3,2) (6,3) (5,4,3) = (9,8,7,6,5,4,3,2,1) \]
  
  Relabel these $5$ permutations as follows.  Let $\rho_4 =
  (\sigma'_5)^{-1} = (9,8,7,1)$, $\rho_2 = (\sigma'_4)^{-1} = (2,1)$,
  $\rho_1 = (\sigma'_3)^{-1} = (3,2)$, $\rho_5 = (\sigma'_2)^{-1} =
  (6,3)$ and $\rho_3 = (\sigma'_1)^{-1} = (5,4,3)$.  The subscripts
  $\rho_j = (\sigma'_i)^{-1}$ are chosen so that $j = \rev(\pi(i))$
  where $\rev$ is the function that globally reverses the ordering on
  the numbers $1$ through $k$, with $k=5$ in this example.  In this
  case it switches $1$ and $5$, switches $2$ and $4$ and fixes $3$.
  For example, $\pi(5) = 2$ and $\rev(2) = 4$ so $(\sigma'_5)^{-1}$ is
  relabeled $\rho_4$.  See Figure~\ref{fig:reverse}.

  When Lemma~\ref{lem:standard-names} is applied to this ordering
  $\rho_4 \rho_2 \rho_1 \rho_5 \rho_3$ of the hyperedges of
  $\refl(\tau')$, $\rho_5 = (6,3)$ is inverted by $\rho_3 = (5,4,3)$
  so $\rho'_5 = (6,5)$ because $(6,3)(5,4,3) = (6,5,4,3) =
  (5,4,3)(6,5)$, but $\rho_3$ and $\rho_1$ are not inverted at all so
  that $\rho'_3 = \rho_3 = (5,4,3)$ and $\rho'_1 = \rho_1 = (3,2)$.
  The permutation $\rho_2$ is inverted by $\rho_1$ so $\rho'_2 =
  (3,1)$, because $(2,1)(3,2) = (3,2,1) = (3,2)(3,1)$.  Finally,
  $\rho_4$ is inverted by $\rho_2$, $\rho_1$ and $\rho_3$.  Thus
  $\rho'_4 = (9,8,7,6)$ because $\rho_2 \rho_1 \rho_3 =
  (2,1)(3,2)(5,4,3) = (5,4,3,2,1)$ and $(9,8,7,1)(5,4,3,2,1) =
  (9,8,7,5,4,3,2,1) = (5,4,3,2,1)(9,8,7,5)$.  A close examination of
  these steps shows that they are undoing exactly the conjugations
  that were done in Example~\ref{ex:standard}.  The end result of this
  standardization is \[\rho'_1 \rho'_2 \rho'_3 \rho'_4 \rho'_5 = (3,2)
  (3,1) (5,4,3) (9,8,7,5) (6,5)\] and this is the reduced
  factorization of $c^{-1}$ corresponding to the unique proper
  ordering of the hyperedges of $\refl(\tau)$.
\end{exmp}

\begin{figure}
  \begin{tikzpicture}[scale=1]
    \begin{scope}[xshift=-3cm]
      \foreach \x in {1,2,...,9} {\coordinate (\x) at (90+40*\x:1.5cm);}
      \filldraw[YellowPoly] (0,0) circle (2.5cm);
      \draw[BlueLine] (3) (1)--(2)--(3)--(6);
      \draw[BluePoly] (3)--(4)--(5)--cycle;
      \draw[BluePoly] (1)--(7)--(8)--(9)--cycle;
      \draw[BlueLine] (-1.6,-.3) node {\small $1$};
      \draw[BlueLine] (-1,.6) node {\small $2$};
      \draw[BlueLine] (0,-1.6) node {\small $3$};
      \draw[BlueLine] (.3,1) node {\small $4$};
      \draw[BlueLine] (0,-.5) node {\small $5$};
      \foreach \x in {1,2,...,9} {\fill (\x) circle (.7mm);}
      \foreach \x in {1,2,...,9} {\draw (90+40*\x:2cm) node {\small \x};}
    \end{scope}
    \begin{scope}[xshift=3cm]
      \foreach \x in {1,2,...,9} {\coordinate (\x) at (90+40*\x:1.5cm);}
      \filldraw[YellowPoly] (0,0) circle (2.5cm);
      \draw[BlueLine] (2)--(3)--(1) (5)--(6);
      \draw[BluePoly] (3)--(4)--(5)--cycle;
      \draw[BluePoly] (5)--(7)--(8)--(9)--cycle;
      \draw[BlueLine] (-1.6,-.3) node {\small $1$};
      \draw[BlueLine] (-.8,.3) node {\small $2$};
      \draw[BlueLine] (0,-1.6) node {\small $3$};
      \draw[BlueLine] (.7,.3) node {\small $4$};
      \draw[BlueLine] (1,-1.3) node {\small $5$};
      \foreach \x in {1,2,...,9} {\fill (\x) circle (.7mm);}
      \foreach \x in {1,2,...,9} {\draw (90+40*\x:2cm) node {\small
          \x};}
    \end{scope}
    \end{tikzpicture}
  \caption{The ordered noncrossing hypertree $\refl(\tau')$ on the
    left that corresponds to the properly ordered noncrossing
    hypertree $\refl(\tau)$ on the right.\label{fig:reverse}}
\end{figure}
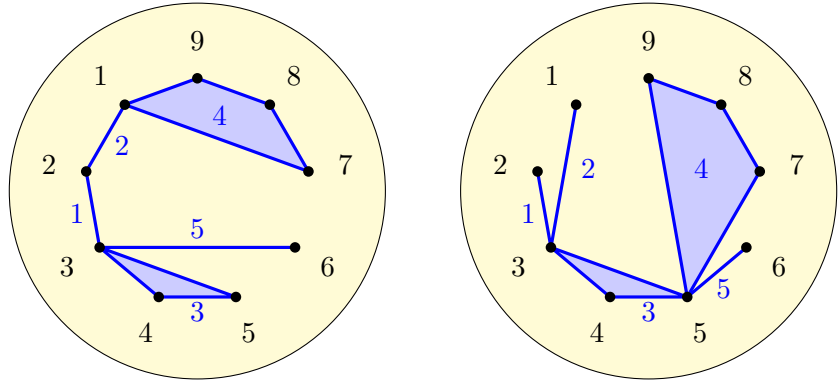

These lemmas combine to prove the theorem.

\renewcommand{\themain}{\ref{main:bijections}}
\begin{main}[Bijections]
  For every noncrossing hypertree $\tau$ there is a bijection between
  the number of special spheres containing the tree simplex labeled
  $\tau$ as a top-dimensional simplex and the set of tree simplices
  contained in the special sphere labeled $\tau$.  When $\tau$ is a
  noncrossing tree, this means that there is a bijection between the
  set $\{ \sigma \mid \chamber(\tau) \in \apartment(\sigma)\}$ of
  apartments containing the tree chamber labeled $\tau$ and the set
  $\{ \sigma \mid \chamber(\sigma) \in \apartment(\tau)\}$ of tree
  chambers in the apartment labeled~$\tau$.
\end{main}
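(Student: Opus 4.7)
The plan is to assemble the bijection directly from the two preceding lemmas, so there is very little new work to do. Fix a reflection of the underlying polygon and let $\refl$ denote the induced involution on the set of noncrossing hypertrees. Lemma~\ref{lem:reflect-include} already supplies half of what is needed: it produces a natural bijection between the set of top-dimensional tree simplices inside $\sphere(\refl(\tau))$ and the set of special spheres that contain $\simplex(\tau)$ as a top-dimensional simplex. The only remaining task is to transport this across $\refl$ so that the simplices live in $\sphere(\tau)$ rather than in $\sphere(\refl(\tau))$.

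For this transport, Lemma~\ref{lem:reflect-sphere} does exactly what is needed: it asserts that $\sphere(\tau)$ and $\sphere(\refl(\tau))$ are isomorphic as simplicial spheres via the automorphism induced by $\refl$, and in particular their top-dimensional tree simplices are in natural bijection. Composing the two bijections yields
\[
  \{\sigma \mid \simplex(\sigma)\subseteq \sphere(\tau)\text{ top-dim}\}
  \;\longleftrightarrow\;
  \{\sigma \mid \simplex(\sigma)\subseteq \sphere(\refl(\tau))\text{ top-dim}\}
  \;\longleftrightarrow\;
  \{\sigma \mid \simplex(\tau)\subseteq \sphere(\sigma)\text{ top-dim}\},
\]
which is precisely the bijection asserted in the first sentence of the theorem.

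For the second sentence, specialize to the case where $\tau$ is a noncrossing tree. Then $\simplex(\tau)$ is the tree chamber $\chamber(\tau)$ and $\sphere(\tau)$ is the apartment $\apartment(\tau)$, while Proposition~\ref{prop:nc-apartments} (together with Theorem~\ref{main:duality} and Lemma~\ref{lem:union-tree-simplices}) guarantees that the top-dimensional tree simplices inside any apartment are exactly tree chambers labeled by noncrossing trees and that the special spheres of top dimension containing a chamber are exactly apartments. Thus the general bijection restricts to the stated bijection between the apartments containing $\chamber(\tau)$ and the chambers contained in $\apartment(\tau)$.

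The only real subtlety to check — and the closest thing to an obstacle — is that the two intermediate bijections respect the ``top-dimensional'' hypothesis. This reduces to the observation that $\simplex(\sigma)$ is a top-dimensional face of $\sphere(\tau)$ exactly when $\sigma$ and $\tau$ have the same number of hyperedges, and both the reflection $\refl$ and the bijection of Lemma~\ref{lem:reflect-include} preserve the number of hyperedges. One should also note that although the composed bijection depends on the choice of reflection $\refl$, its mere existence, and hence the equality of cardinalities, does not, so the statement of the theorem is insensitive to that choice.
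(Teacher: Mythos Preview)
Your proof is correct and follows essentially the same route as the paper: fix a reflection $\refl$, use Lemma~\ref{lem:reflect-include} to biject the special spheres containing $\simplex(\tau)$ with the top-dimensional tree simplices in $\sphere(\refl(\tau))$, then use Lemma~\ref{lem:reflect-sphere} to biject the latter with the top-dimensional tree simplices in $\sphere(\tau)$. Your additional remarks about preservation of the number of hyperedges and about the dependence on the choice of $\refl$ are welcome clarifications but do not alter the argument.
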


\begin{proof}
  Let $\refl$ the involution on the set of noncrossing hypertrees
  induced by a fixed reflection of a polygon in the plane used to
  define them.  Let $A$, $C$ and $C'$ be the set of special spheres
  containing $\simplex(\tau)$ as a top-dimensional simplex, the set of
  top-dimensional simplices inside $\sphere(\tau)$ and the set of
  top-dimensional simplices inside $\sphere(\refl(\tau))$.
  Lemma~\ref{lem:reflect-include} gives a bijection between $A$ and
  $C'$ and Lemma~\ref{lem:reflect-sphere} gives a bijection between
  $C'$ and~$C$.  The final assertion is merely the special case where
  the simplices and spheres under discussion have the largest possible
  dimension.
\end{proof}

\section{Theorem~\ref{main:associahedra}: Associahedra}

This section completes the proof of the final main theorem,
Theorem~\ref{main:associahedra}, as part of an investigation of the
simplicial structure of special sphere and especially apartments.  The
special spheres with the least number of tree simplices turn out to be
cross polytopes and the apartments that seem to have the most turn out
to be simplicial associahedra.  The cross polytope case is considered
first since it is much easier to establish.  It is also less
surprising because of the following observation.

\begin{lem}[Subspheres]\label{lem:subspheres}
  If $\tau$ and $\tau'$ are noncrossing hypertrees with $k$ and $k-1$
  hyperedges, respectively, such that $\tau > \tau'$ is a covering
  relation in the noncrossing hypertree poset, then the simplicial
  sphere $\sphere(\tau)$ contains $\sphere(\tau')$ as a subcomplex and
  this equitorial subsphere divides $\sphere(\tau)$ into two equal
  hemispheres.  As a consequence, the $k-1$ facets of $\simplex(\tau)$
  divide $\sphere(\tau)$, isometric to $\sph^{k-2}$, into at least
  $2^{k-1}$ top dimensional tree simplices.
\end{lem}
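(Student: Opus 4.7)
The plan is to realize $\sphere(\tau')$ as a special subsphere of $\sphere(\tau)$ in the sense of Proposition~\ref{rem:subspheres}, deduce that it is a great equatorial subsphere, and then apply the construction simultaneously to all $k-1$ covering relations of $\tau$.

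First I would compare the Boolean sublattices defining $\sphere(\tau)$ and $\sphere(\tau')$. If $e$ and $e'$ are the two hyperedges of $\tau$ whose merger produces the hyperedge $e\cup e'$ of $\tau'$, then the subsets of hyperedges of $\tau'$ correspond bijectively to those subsets of hyperedges of $\tau$ in which $e$ and $e'$ are either both present or both absent. Because $e$ and $e'$ share a vertex, any sub-hyperforest containing both places them in a single connected component with vertex set $e\cup e'$, and the resulting noncrossing partition is unchanged if the pair $\{e,e'\}$ is replaced by the single hyperedge $e\cup e'$. This exhibits the rank $k-1$ Boolean sublattice of $\ncparts_{n+1}$ built from $\tau'$ as a special Boolean sublattice of the rank $k$ Boolean sublattice built from $\tau$, namely the one associated to the partition of the hyperedges of $\tau$ whose only nonsingleton block is $\{e,e'\}$. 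Proposition~\ref{rem:subspheres} then identifies $\sphere(\tau')$ with a simplicial subsphere of $\sphere(\tau)$, and in orthoscheme coordinates this subsphere is cut out by the single hyperplane $x_e = x_{e'}$, so it is a great $(k-3)$-subsphere bisecting $\sphere(\tau)$ into two isometric hemispheres. The most delicate point here is verifying that the subsphere produced by this special Boolean sublattice inside $\sphere(\tau)$ coincides as a subcomplex, rather than merely as an isometric copy, with $\sphere(\tau')$ as built directly from $\tau'$; this amounts precisely to the partition-agreement observation just made.

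For the consequence, the $k-1$ facets of $\simplex(\tau)$ correspond to the $k-1$ covering relations of the hyperedge poset $\poset(\tau)$ (Proposition~\ref{prop:he-merge}), which is a tree on $k$ vertices, and by the first part each facet extends to an equatorial great subsphere whose pole is a root vector $\epsilon_e - \epsilon_{e'}$ in the ambient $\R^{k-1}$. Since the edges of a tree on $k$ vertices give $k-1$ linearly independent roots, as in the proof of Proposition~\ref{prop:intersections}, these $k-1$ great subspheres are in general position and partition $\sphere(\tau)$ into exactly $2^{k-1}$ open chambers. By Lemma~\ref{lem:union-tree-simplices} each of these equatorial subspheres is a subcomplex of the tree-simplicial structure on $\sphere(\tau)$, so no top-dimensional tree simplex straddles any of them; each of the $2^{k-1}$ open chambers therefore contains at least one top-dimensional tree simplex, giving the required lower bound.
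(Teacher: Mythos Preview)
Your argument is correct and the core idea agrees with the paper's: in both cases the equatorial subsphere is the locus $x_e = x_{e'}$ in the orthoscheme coordinates on $\sphere(\tau)$. The paper phrases this combinatorially, observing that any ordering of the hyperedges of $\tau'$ lifts in exactly two ways to an ordering of $\tau$ by sending the merged hyperedge to $r$ and then resolving $e,e'$ to $r\pm\epsilon$; the facet in $\sphere(\tau')$ is thus the common wall between the two resulting simplices in $\sphere(\tau)$. You instead route through the Boolean-sublattice machinery, identifying $\sphere(\tau')$ as the special subsphere of Proposition~\ref{rem:subspheres} associated to the partition with single nonsingleton block $\{e,e'\}$, and then use the root/tree linear-independence argument from Proposition~\ref{prop:intersections} together with Lemma~\ref{lem:union-tree-simplices} to handle the $2^{k-1}$ consequence explicitly. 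The paper's version is terser and leaves the consequence as essentially self-evident; your version makes the geometry (great subsphere, independent poles, no straddling) precise, at the cost of invoking more of the earlier infrastructure. One small wording slip: the connected component containing both $e$ and $e'$ need not have vertex set exactly $e\cup e'$ (other hyperedges may attach), but your conclusion---that replacing $\{e,e'\}$ by $e\cup e'$ leaves the partition unchanged---is unaffected.
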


\begin{proof}
  This is nearly immediate from the various constructions.  If $e$ and
  $e'$ are the hyperedges labeling the endpoints of the covering
  relation in $\poset(\tau)$ that are merged to form $\tau'$, then $e$
  and $e'$ share a vertex $v$.  Any ordering of the hyperedges of
  $\tau'$ can be turned into a ordering of $\tau$ in two different
  minimal ways by splitting the hyperedge back into $e$ and $e'$ and
  then breaking the tie between them by placing one before the other.
  If the ordering is thought of as a map to the reals, with the merged
  hyperedges being sent to a real number $r$, then breaking the tie
  means sending $e$ and $e'$ to $r+\epsilon$ and $r-\epsilon$ or to
  $r-\epsilon$ and $r + \epsilon$.  The tree simplex in
  $\sphere(\tau')$ labeled by this ordering of $\tau'$ is the facet
  between the two tree simplices in $\sphere(\tau)$ labeled by the two
  associated orderings of~$\tau$.
\end{proof}

Let $\tau$ be a noncrossing hypertree with $k$ hyperedges and note
that the simplicial structure determined by the intersections of the
$k-1$ subspheres of $\sphere(\tau)$ coming from the facets of $\tau$
is already that of a cross-polytope.  This arrangement is called a
\emph{metric cross polytope} because it has additional structure: the
simplicial structure is that of a cross polytope but also every
simplex lives in a metric subsphere.  The following is an easy
consequence of Lemma~\ref{lem:subspheres}.

\begin{lem}[Cross-polytopes]\label{lem:cross-poly}
  For each noncrossing hypertree $\tau$ with $k$ hyperedges, the
  special sphere $\sphere(\tau)$ has at least $2^{k-1}$
  top-dimensional tree simplices and when this minimum value is
  achieved, the structure of $\sphere(\tau)$ is that of a metric
  cross-polytope.
\end{lem}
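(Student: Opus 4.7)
The plan is to combine Lemma~\ref{lem:subspheres} with Proposition~\ref{prop:intersections} so that the $k-1$ equatorial subspheres arising from the covering relations in $\poset(\tau)$ cut $\sphere(\tau)$ into exactly $2^{k-1}$ spherical orthants in general position, and then to recognize the equality case as forcing each orthant to be precisely one tree simplex.

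First, since the hyperedge poset $\poset(\tau)$ is a tree on $k$ vertices by Proposition~\ref{prop:edge-posets}, it has exactly $k-1$ covering relations, each producing a distinct noncrossing hypertree $\tau' \lessdot \tau$ with $k-1$ hyperedges. By Lemma~\ref{lem:subspheres} each $\sphere(\tau')$ is a great $(k-3)$-subsphere of $\sphere(\tau) \cong \sph^{k-2}$ that halves it. The $k-1$ poles of these equators correspond to edges of the Hasse diagram of $\poset(\tau)$ viewed as roots of the ambient symmetric group, and since that Hasse diagram is a tree, Proposition~\ref{prop:intersections} applies directly to ensure the poles are linearly independent. Hence the $k-1$ equators are in general position and decompose $\sphere(\tau)$ into exactly $2^{k-1}$ spherical orthants, each of which is itself an open spherical simplex.

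Second, an induction on $k$ combined with the hemisphere-halving principle of Lemma~\ref{lem:subspheres} shows that each orthant contains at least one top-dimensional tree simplex. Assuming that $\sphere(\tau')$ has at least $2^{k-2}$ top-dimensional tree simplices by the inductive hypothesis, each such simplex in the equator is a facet shared by exactly two top-dimensional tree simplices of $\sphere(\tau)$, one in each of the two hemispheres cut out by $\sphere(\tau')$. This yields at least $2 \cdot 2^{k-2} = 2^{k-1}$ top-dimensional tree simplices in $\sphere(\tau)$ and so establishes the lower bound.

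Finally, when the lower bound is achieved with equality, each orthant must be filled by exactly one top-dimensional tree simplex; and since both the orthants and the tree simplices are spherical $(k-2)$-simplices bounded by the same equatorial structure, they coincide. The decomposition of $\sphere(\tau)$ thus becomes the boundary complex of a $(k-1)$-dimensional cross-polytope with $2(k-1)$ antipodal pairs of vertices at the intersection points of the root poles with the sphere, and every intersection of subsets of equators is itself a metric subsphere inherited from $\sphere(\tau)$, yielding the metric cross-polytope structure in the sense defined just before the statement. The main obstacle I expect is verifying that the poles of these equators align precisely with the hypothesis of Proposition~\ref{prop:intersections} — i.e., identifying the correct directed tree and the corresponding roots so that linear independence can be invoked cleanly — but once this alignment is made, the rest of the argument reduces to standard hyperplane-arrangement combinatorics on $\sph^{k-2}$.
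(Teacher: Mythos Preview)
Your proposal is correct and follows essentially the same route as the paper: the $k-1$ equatorial subspheres coming from the facets of $\simplex(\tau)$ are in general position (the paper takes this from Lemma~\ref{lem:po-simplices}, which already invokes Proposition~\ref{prop:intersections} exactly as you do), so they cut $\sphere(\tau)$ into $2^{k-1}$ orthants forming a metric cross-polytope, and since tree simplices tile $\sphere(\tau)$ without crossing these equators, each orthant contains at least one tree simplex, with equality forcing the cross-polytope structure.

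One small remark: your inductive argument in the second paragraph is a correct alternative way to obtain the count $\geq 2^{k-1}$, but it does not by itself establish the stronger statement you announce there, namely that \emph{each orthant} contains a tree simplex. That stronger fact is what you need for the equality case, and it follows directly (as in the paper) from the first paragraph together with the observation that the equators are subcomplexes of the tree-simplex structure (Lemma~\ref{lem:subspheres} and Lemma~\ref{lem:union-tree-simplices}), so every open orthant is a nonempty union of tree simplices. The induction is thus a harmless detour rather than a gap.
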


\begin{proof}
  As noted above, the simplicial structure determined by the
  intersections of the $k-1$ subspheres of $\sphere(\tau)$ coming from
  the facets of $\tau$ is already that of a metric cross-polytope,
  dividing the sphere into $2^{k-1}$ top-dimensional simplices.  Since
  every top-dimensional tree simplex must live in one of these pieces,
  the inequality and the consequence of equality are immediate.
\end{proof}

Many noncrossing hypertrees achieve this theoretical minimum.

\begin{thm}[Cross-polytopes]\label{thm:cross-poly}
  If $\tau$ is noncrossing hypertree with $k$ hyperedges such that
  every hyperedge is either a minimum element or a maximum element in
  its hyperedge poset, then $\sphere(\tau)$ contains exactly $2^{k-1}$
  top-dimensional tree simplices and, as a consequence,
  $\sphere(\tau)$ is a metric cross-polytope.
\end{thm}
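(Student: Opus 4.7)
The plan is to combine the lower bound of $2^{k-1}$ from Lemma~\ref{lem:cross-poly} with a matching upper bound, so it suffices to show that the number of noncrossing hypertrees $\tau'$ with $\simplex(\tau')\subset\sphere(\tau)$ is at most $2^{k-1}$. Every such $\tau'$ arises as the standardization, via Lemma~\ref{lem:standard-names}, of some ordering of the $k$ hyperedges of $\tau$, so I would classify the $k!$ orderings by their \emph{orientation pattern}: for each of the $k-1$ covering relations $m<M$ of $\poset(\tau)$, record whether $m$ precedes $M$ or $M$ precedes $m$ in the ordering. These $2^{k-1}$ patterns are naturally in bijection with the $2^{k-1}$ orthants cut out in $\sphere(\tau)$ by the $k-1$ equatorial subspheres of Lemma~\ref{lem:subspheres}. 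The goal thus reduces to showing that standardization yields the same properly ordered noncrossing hypertree $\tau'$ for any two orderings sharing an orientation pattern.

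The key structural observation, where the height-$1$ hypothesis on $\poset(\tau)$ enters, is that any maximal hyperedge $M$ of $\tau$ must occupy the rightmost position in the local linear order at every vertex $v\in M$; otherwise the hyperedge immediately to the right of $M$ at $v$ would give a covering relation with $M$ strictly below it, contradicting the maximality of $M$. Since each local linear order has a unique rightmost element, no vertex can be shared by two maximal hyperedges. Consequently any two maximal hyperedges of $\tau$ have disjoint vertex sets, so their irreducible permutations commute as elements of the symmetric group. (The symmetric statement for minimal hyperedges, which I will not need here, holds by the analogous argument with ``leftmost'' in place of ``rightmost''.)

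To finish, I would invoke Remark~\ref{rem:fewer-conj}: for each $j$ the standardized permutation is $\sigma'_j=\beta_j^{-1}\sigma_j\beta_j$, where $\beta_j$ is the product, in some proper order, of the permutations of those hyperedges above $e_j$ in $\poset(\tau)$ that precede $e_j$ in the chosen ordering. If $e_j$ is maximal then nothing lies above it, so $\beta_j$ is the identity and $\sigma'_j=\sigma_j$. If $e_j$ is minimal, the hyperedges above $e_j$ in $\poset(\tau)$ are all maximal and, by the structural observation, pairwise commute; hence $\beta_j$ is unambiguously determined by the set of maximal neighbors of $e_j$ that precede $e_j$, and this set is precisely what the orientation pattern records. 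In both cases $\sigma'_j$, and therefore the standardized $\tau'$, is a function of the orientation pattern alone. This produces at most $2^{k-1}$ distinct $\tau'$, matching the lower bound and establishing equality. The main obstacle is the structural claim that maximal hyperedges are pairwise disjoint; once that is in hand, the standardization computation decouples cleanly along the orthants.
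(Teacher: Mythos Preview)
Your argument is correct and follows essentially the same route as the paper: both use Lemma~\ref{lem:cross-poly} for the lower bound and Remark~\ref{rem:fewer-conj} to show that the standardized $\sigma'_j$ for a maximal hyperedge is unchanged, while for a minimal hyperedge it depends only on which of its maximal neighbors precede it, yielding at most $2^{k-1}$ outcomes in total. Your explicit ``orientation pattern'' is exactly the data the paper is counting when it multiplies $2^{d}$ over the minimal elements; your structural observation that distinct maximal hyperedges are vertex-disjoint is a nice bonus that makes the commutation transparent, though the paper gets the same conclusion (that $\beta_j$ depends only on the subset of inverters) directly from Remark~\ref{rem:fewer-conj} and Lemma~\ref{lem:po-nc} without isolating this fact.
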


\begin{proof}
  Since by Lemma~\ref{lem:cross-poly} there are at least $2^{k-1}$
  top-dimensional tree simplices in $\sphere(\tau)$, it suffices to
  show that under these conditions $2^{k-1}$ is also an upper bound.
  If $e$ is a hyperedge of $\tau$ that labels a maximum element in
  $\poset(\tau)$, then by Remark~\ref{rem:fewer-conj}, it remains
  unchanged by the standardization process under any ordering of the
  hyperedges of $\tau$.  When $e$ is a hyperedge that labels a minimum
  element in $\poset(\tau)$ that $e$ has degree $d$ in the tree that
  is the Hasse diagram of this poset, then by
  Remark~\ref{rem:fewer-conj} there are at most $2^d$ possibilities
  for the new hyperedge permutation $\sigma'$ derived from $\sigma$ in
  the new tree created by the standardizing process.  Since the sum of
  the degrees of the minimal elements in $\poset(\tau)$ is equal to
  $k-1$, the number of covering relations it has, the total number of
  possible noncrossing hypertrees in $\sphere(\tau)$ that result from
  the standardizing process is at most $2^{k-1}$, the product of these
  choices over the various minimal elements.  The final assertion
  follows from Lemma~\ref{lem:cross-poly}.
\end{proof}

The noncrossing trees that satisfy this condition produce apartments
that are cross-polytopes.

\begin{rem}[Zig-zag trees]\label{rem:zig-zag}
  If $\tau$ is a noncrossing tree satisfying the hypothesis of
  Theorem~\ref{thm:cross-poly}, then $\tau$ has the structure of a
  single path that zig-zags back and forth across the polygon so that
  every edge has one of two possible slopes (assuming that the
  underlying convex polygon is regular).  The edges with one slope are
  the maximal elements in $\poset(\tau)$ and the edges with the other
  slope are the minimal elements in $\poset(\tau)$.  In addition to
  $\apartment(\tau)$ having the fewest possible number of tree
  chambers, the tree chamber $\chamber(\tau)$ contains the maximum
  number of partition chambers.  These arise from the orderings known
  as zig-zag permutations and they correspond to linear extensions of
  the zig-zag poset that is $\poset(\tau)$.  The number of these
  linear extensions is given by the sequence \texttt{oeis:A000111}
  which starts $1,1,2,5,16,61,272,1385,7936$, a set of numbers that
  also occur in the exponential generating function for $\sec(x)+
  \tan(x)$.  For further information about zig-zag posets and zig-zag
  permutations see \cite{EC1} or the references in listed in the
  Online Encyclopedia of Integer Sequences \cite{oeis}.  In a context
  very closely related to the results presented here, these numbers
  also appear in an article by K. Saito~\cite{Sa07}.
\end{rem}

The remainder of the section focuses on certain apartments in the
noncrossing hypertree complex that have the simplicial structure of a
simplicial associahedron.  That there are at least some simplicial
associahedra in the noncrossing hypertree complex is clear because of
its identification as a generalized cluster complex of type $A$
(Remark~\ref{rem:gen-cluster}).

\begin{rem}[Associahedra and generalized cluster complexes]
  In their foundational article on generalized cluster complexes
  \cite{FoRe05}, Fomin and Reading prove the generalized cluster
  complexes are nested in the following sense.  If $\Phi$ is a root
  system and $m \geq m'$ are two positive integers then, in their
  notation, the generalized cluster complex $\Delta^m(\Phi)$ contains
  the generalized cluster complex $\Delta^{m'}(\Phi)$ as a full
  subcomplex determined by a subset of vertex set.  In the case where
  $m=2$, $m'=1$ and $\Phi$ is a type $A$ root system, this means that
  the noncrossing hypertree complex contains a simplicial
  associahedron as a subcomplex.
\end{rem}

The noncrossing trees that label apartments with a simplicial
associahedral structure are those whose simplex is as small as
possible.  If the tree chamber labeled by a noncrossing tree $\tau$
contains only a single partition chamber, it is because the hyperedge
poset $\poset(\tau)$ has a unique linear extension, which in turn
means that $\poset(\tau)$ is already a linear ordering.  Two extreme
cases with this property are \emph{stars} where all edges in $\tau$
share a common vertex, and \emph{border trees} that consist of all but
one edge of the boundary cycle of the underlying polygon.  The full
set of trees with a linear hyperedge poset can be characterized as
those whose structure is that of a caterpillar.

\begin{defn}[Caterpillars]
  A noncrossing tree $\tau$ is called a \emph{caterpillar} if the set
  of edges in $\tau$ that live in the boundary of the polygon form a
  connected subtree. The name comes from viewing the edges in the
  boundary path as its \emph{backbone} and the other edges are its
  \emph{legs}.  Stars and border trees are the extreme cases where the
  caterpillar has as many or as few legs as possible.  For every
  caterpillar and for every $i$, the noncrossing partition of the
  hyperforest formed by the first $i$ edges of $\tau$ in the unique
  proper ordering of $\tau$, has a single nontrivial block.  In
  addition, these blocks are nested so that caterpillars correspond to
  the maximal chains in image of the annular poset formed by the basic
  hypertrees (see the example shown in
  Figure~\ref{fig:two-hyperedges}) when it is interpreted as a
  subposet of the noncrossing partition lattice using only the lower
  hyperedge of each basic hypertree.  The number of such maximal
  chains, and thus the number of caterpillars is $(n+1) 2^{n-1}$.  To
  see this pick a starting point in the bottom level and then move to
  the left or to the right at successive step.
\end{defn}

In this language Theorem~\ref{main:associahedra} asserts that the
apartment of a caterpillar is a simplicial associahdron.

\renewcommand{\themain}{\ref{main:associahedra}}
\begin{main}[Associahedra]
  Let $\tau$ be a noncrossing tree.  If the tree chamber labeled
  $\tau$ consists of a single partition chamber, then the apartment
  labeled $\tau$ is a simplicial associahedron.  In addition, the
  variety of simplicial associahedra produced in this way include all
  of the simplicial associahedra that are normal fans to the type $A$
  simple associahedra constructed by Hohlweg and Lange.
\end{main}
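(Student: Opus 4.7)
The plan is first to observe that the hypothesis forces $\tau$ to be a caterpillar: as noted in the discussion preceding the theorem statement, $\chamber(\tau)$ consists of a single partition chamber precisely when $\poset(\tau)$ has a unique linear extension, equivalently when $\poset(\tau)$ is itself a chain, which in turn characterizes caterpillars. The remaining tasks are to show that $\apartment(\tau)$ is a simplicial associahedron for each caterpillar $\tau$, and to verify that the family so produced covers all Hohlweg--Lange type $A$ realizations.

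For the associahedral identification, I would invoke Theorem~\ref{main:bijections} to replace the question about the structure of $\apartment(\tau)$ with an enumeration of apartments containing the single partition chamber $\chamber(\tau)$. Since $\chamber(\tau)$ is a single partition chamber, it corresponds to a specific reduced factorization $r_1 r_2 \cdots r_n = c$ of the Coxeter element $c$. By Lemma~\ref{lem:standard-names} an apartment $\apartment(\sigma)$ contains $\chamber(\tau)$ exactly when some ordering of the edges of $\sigma$ standardizes to $r_1 \cdots r_n$; equivalently, when the reduced factorization attached to $\sigma$ arises from $r_1 \cdots r_n$ by the Hurwitz action on reduced factorizations of $c$. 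The orbit of $r_1 \cdots r_n$ under these commutation-and-conjugation moves can be shown to be in bijection with the maximal chains of Reading's $c$-Cambrian lattice, which under the bijection of Theorem~\ref{thm:ncht-poly} correspond precisely to the triangulations of an $(n+2)$-gon compatible with $c$. The face structure of $\apartment(\tau)$ then matches that of the dual of the associahedron because two adjacent tree chambers differ in exactly one basic noncrossing hypertree, which is a single diagonal flip in the dissection picture (Lemma~\ref{lem:facets} and Remark~\ref{rem:interior-facet}).

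For the Hohlweg--Lange statement, one attaches to each caterpillar $\tau$ the Coxeter element $c_\tau$ read off from the unique proper ordering of its hyperedges, and invokes \cite{ReSp09} to identify the $c$-Cambrian fan as the normal fan of the Hohlweg--Lange realization of the type $A_n$ associahedron associated to $c$. It then suffices to verify by direct combinatorial enumeration that every Coxeter element of $\sym_{n+1}$ needed to realize a Hohlweg--Lange fan arises as some $c_\tau$: a caterpillar is specified by its backbone and legs, which is exactly the bipartition of the ``interior'' vertices $\{2,\ldots,n\}$ into up- and down-pieces that parameterizes the Hohlweg--Lange family.

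The principal obstacle is the explicit identification of the Hurwitz orbit of a reduced factorization of $c$ with the maximal chains of the $c$-Cambrian lattice, restated in terms of noncrossing trees. While this equivalence is essentially folklore in Coxeter combinatorics, verifying it in the present setting requires careful tracking of how the standardization of Lemma~\ref{lem:standard-names} interacts with reorderings, using Remark~\ref{rem:fewer-conj} to pin down the minimal conjugations. Once that identification is in place, matching the facet structure of $\apartment(\tau)$ to that of the simplicial associahedron and enumerating the Hohlweg--Lange parameters become straightforward bookkeeping.
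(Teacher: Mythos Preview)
Your approach diverges from the paper's, and the Hurwitz-action step contains a genuine error. The Hurwitz (braid group) action on reduced reflection factorizations of a Coxeter element in $\sym_{n+1}$ is \emph{transitive}: every reduced factorization of $c$ into transpositions lies in a single orbit. Hence the condition ``the reduced factorization attached to $\sigma$ arises from $r_1\cdots r_n$ by the Hurwitz action'' is vacuous, and your criterion would force every apartment to contain $\chamber(\tau)$, contradicting for instance Theorem~\ref{thm:cross-poly}, where zig-zag trees give apartments with only $2^{n-1}$ tree chambers. The conjugation in Lemma~\ref{lem:standard-names} does resemble a Hurwitz move, but ``reorder the edges of a fixed $\sigma$ and then standardize'' is a much finer relation than Hurwitz equivalence. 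Relatedly, the claimed bijection of this orbit with maximal chains of the $c$-Cambrian lattice, and of those with triangulations, conflates the Catalan-many \emph{elements} of the Cambrian lattice (which do correspond to triangulations) with its maximal chains.

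Even if this step were repaired, invoking Theorem~\ref{main:bijections} only transfers cardinalities, not simplicial structure; the observation that adjacent tree chambers differ by one diagonal holds everywhere in the complex, so by itself it cannot pin down which flips stay inside $\apartment(\tau)$. The paper proceeds more directly and never uses Theorem~\ref{main:bijections}: it relabels the polygon's vertices according to the caterpillar's unique proper ordering so as to match the Hohlweg--Lange convention, and then argues that the coarsening of partition chambers into tree chambers inside $\apartment(\tau)$ coincides with Reading's coarsening of the type~$A$ Coxeter fan into the $c$-Cambrian fan. The associahedral structure and the coverage of all Hohlweg--Lange normal fans then come directly from \cite{Re06,ReSp09,HoLa07}.
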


\begin{proof}[Proof sketch]\footnote{In a certain sense, the proof of
    this theorem is a matter of chasing definitions through the
    literature and the portion currently included here is merely a
    sketch of the main ideas.  The final version of this article will
    contain more details.}  Let $\tau$ be a catepillar and number the
  vertices of the polygon as follows.  Label the endpoint of the
  unique minimal edge in $\poset(\tau)$ that is a leaf in $\tau$ with
  the number $0$, and then the label the unique vertex that belongs to
  the nontrivial block at stage $i$ but not at stage $i-1$ with the
  number $i$.  This numbering corresponds to the vertex numbering used
  by Hohlweg and Lange in \cite{HoLa07} to construct their various
  realizations of the classical type $A$ simple associahedron.  The
  normal fans of these simple associahedra are the $c$-Cambrian fans
  of Reading, which have the desired structure of a simplicial
  associahedron \cite{Re06}.  The identifications of partition
  chambers made by Reading ultimately correspond to the
  identifications of partitions chambers into tree chambers made here.
\end{proof}

Now that these foundational results are in place, there are many
avenues for future research that might be pursued.  One obvious route
is to try and extend these results to the other types of finite
Coxeter groups, and another would to be re-examine earlier work
relating noncrossing partitions and associahedra to see whether the
arguments can be recast as processess that take place completely
within the noncrossing hypertree complex and/or the noncrossing
partition link.  Both the bijective maps between associahedra and
noncrossing partitions and the type-free proofs of the lattice
property might benefit from this type of re-examination.


\medskip

\noindent \textbf{Acknowledgements:} I would like to thank Professor
Kai-Uwe Bux for inviting me to spend two weeks visiting the University
of Bielefeld in early July 2015.  The stimulating conversations that I
had with him and Stefan Witzel while I was there laid the foundations
for the work presented here.  I would also like to thank Frederick
Chapoton, Michael Dougherty, Nathan Reading and Nathan Williams for
feedback on a version of this article drafted in late 2015.

\def\cprime{$'$}
\providecommand{\bysame}{\leavevmode\hbox to3em{\hrulefill}\thinspace}
\providecommand{\MR}{\relax\ifhmode\unskip\space\fi MR }
\providecommand{\MRhref}[2]{%
  \href{http://www.ams.org/mathscinet-getitem?mr=#1}{#2}
}
\providecommand{\href}[2]{#2}

\end{document}